\numberwithin{equation}{section}
\providecommand{\R}{}
\providecommand{\Z}{}
\providecommand{\N}{}
\renewcommand{\R}{\mathbb{R}}
\renewcommand{\Z}{\mathbb{Z}}
\renewcommand{\N}{{\mathbb N}}
\newcommand{\one}{\mathbbm{1}}
\newcommand{\E}[1]{{\mathbf E}\left[#1\right]}	
\providecommand{\deg}{}
\renewcommand{\deg}{d}
\newcommand{\e}{{\mathbf E}}
\newcommand{\p}{\mathbf P}
\newcommand{\I}[1]{{\mathbf 1}_{[#1]}}
\newcommand{\set}[1]{\left( #1 \right)}
\newcommand{\probC}[2]{\mathbf{P}\set{#1 \; \left|  \; #2 \right. }}
\newcommand\cA{\mathcal A}
\newcommand\cC{\mathcal C}
\newcommand\cD{\mathcal D}
\newcommand\cM{\mathcal M}
\newcommand\cS{{\mathcal S}}
\newcommand\cT{{\mathcal T}}
\newcommand{\rA}{\mathrm{A}} 
\newcommand{\rB}{\mathrm{B}} 
\newcommand{\rC}{\mathrm{C}}
\newcommand{\eqdist}{\ensuremath{\stackrel{\mathrm{d}}{=}}}
\newcommand{\convdist}{\ensuremath{\stackrel{\mathrm{d}}{\rightarrow}}}
\newcommand{\pran}[1]{\left(#1\right)}
\providecommand{\eps}{}
\renewcommand{\eps}{\epsilon}
\newcommand{\sequence}[1]{\mathrm{#1}}
\newcommand{\tree}{\mathrm{t}}
\newcommand{\weight}{\mathrm{w}}
\newcommand{\dist}{\ensuremath{\mathrm{dist}}}
\newcommand{\height}{\mathrm{ht}}
\newcommand{\randomtree}{\mathrm{T}}
\theoremstyle{plain}
\newtheorem{thm}{Theorem}
\newtheorem*{thm*}{Theorem}
\newtheorem{lem}[thm]{Lemma}
\newtheorem{prop}[thm]{Proposition}
\newtheorem{cor}[thm]{Corollary}
\newtheorem{claim}[thm]{Claim}
\newcommand{\pnt}{\mathrm{par}}
\newcommand{\dseq}{\sequence{d}}
\begin{document}

\begin{frontmatter}
\title{Random trees have height $O(\sqrt{n})$}
\runtitle{Random trees have height $O(\sqrt{n})$}

\begin{aug}
\author[A]{\fnms{Louigi}~\snm{Addario-Berry}\ead[label=e1]{louigi.addario@mcgill.ca}},
\author[B]{\fnms{Serte}~\snm{Donderwinkel}\ead[label=e2]{s.a.donderwinkel@rug.nl}}
%%%%%%%%%%%%%%%%%%%%%%%%%%%%%%%%%%%%%%%%%%%%%%
%% Addresses                                %%
%%%%%%%%%%%%%%%%%%%%%%%%%%%%%%%%%%%%%%%%%%%%%%
\address[A]{Department of Mathematics and Statistics, McGill University, Montr\'eal, Canada\printead[presep={,\ }]{e1}}

\address[B]{Bernoulli Institute for Mathematics, Computer Science and AI, and CogniGron (Groningen Cognitive Systems and Materials Center), University of Groningen, Groningen, The Netherlands\printead[presep={,\ }]{e2}}
\end{aug}

\begin{keyword}[class=MSC]
\kwd[Primary ]{60C05}
\kwd{60J80}
\kwd{05C05}
\end{keyword}

\begin{keyword}
\kwd{Random trees}
\kwd{Bienaym\'e trees}
\kwd{Galton--Watson trees}
\kwd{simply generated trees}
\kwd{height}
\kwd{configuration model}
\end{keyword}

\begin{abstract} 
We obtain new non-asymptotic tail bounds for the height of uniformly random trees with a given degree sequence, simply generated trees and conditioned Bienaym\'e trees (the family trees of branching processes), in the process settling three conjectures of Janson \cite{janson_simpygen} and answering several other questions from the literature.
 Moreover, we define a partial ordering on degree sequences and show that it induces a stochastic ordering on the heights of uniformly random trees with given degree sequences. 
The latter result can also be used to show that sub-binary random trees are stochastically the tallest trees with a given number of vertices and leaves (and thus that random binary trees are the stochastically tallest random homeomorphically irreducible trees \cite{MR101846} with a given number of vertices).

Our proofs are based in part on the Foata--Fuchs bijection between trees and sequences \cite{FoataFuchs}, which can be recast to provide a line-breaking construction of random trees with given vertex degrees \cite{us}.
\end{abstract}

\end{frontmatter}
\maketitle

\section{\bf Introduction}

This paper proves optimal asymptotic and non-asymptotic tail bounds on the heights of random trees from several natural and well-studied classes, in the process proving conjectures and answering questions from \cite{janson_simpygen,MR3077536,addarioberryfattrees,McDiarmidScott}. We elaborate on connections with previous literature in Section~\ref{sec:relatedwork}, after the presentation of our results. In Section \ref{sec:conclusion}, we discuss directions for future research opened by the current work.

We begin by stating our results for the {\em Bienaym\'e} trees, which constitute the most elementary and well-studied random graph model and the oldest stochastic model for studying population growth. For $\mu=(\mu_k,k \ge 0)$ a probability distribution on $\N=\Z\cap[0,\infty)$, a Bienaym\'e tree with offspring distribution $\mu$, denoted $\randomtree_\mu$, is the family tree of a branching process with offspring distribution $\mu$.\footnote{Bienaymé trees are often referred to as Galton--Watson trees, but we adopt the change in terminology suggested in \cite{addarioberry2021universal}.} (An example appears in Figure~\ref{fig:bienayme}.) We write $|\randomtree_\mu|$ for the size (number of vertices) of $\randomtree_\mu$. For $n \in \N$ such that $\p(|\randomtree_\mu|=n)>0$, we write $\randomtree_{\mu,n}$ to denote a Bienaym\'e tree with offspring distribution $\mu$ conditioned to have size $n$. Bienaym\'e trees are random {\em plane} trees: rooted, unlabeled trees in which the set of children of each node is endowed with a (left-to-right) total order. 
\begin{figure}[hbt]
	\begin{centering}
		\includegraphics[scale=1]{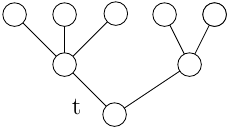}
		\caption{In the tree $\tree$, there are 5 vertices with no children, two vertices with two children and one vertex with 3 children, so for $\mu=(\mu_k,k\geq 0)$ a probability distribution on $\N$, we have that $\p(\randomtree_\mu=\tree)=\mu_0^5\mu_2^2\mu_3$.
		}
		\label{fig:bienayme}
	\end{centering}
\end{figure}

For $p>0$, write $|\mu|_p=\left(\sum_{k=0}^\infty k^p\mu_k\right)^{1/p}$.
\begin{thm}\label{thm.bieninfinitevariance}
	Fix a probability distribution $\mu$ supported on $\N$ with $|\mu|_1\leq 1$ and $|\mu|_2=\infty$. Then, $n^{-1/2}\height(\randomtree_{\mu,n})\to0$ as $n\to\infty$ along values $n$ such that $\p(|\randomtree_\mu|=n)>0$,  both in probability and in expectation.
\end{thm}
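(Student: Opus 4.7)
My plan is to condition on the degree multiset $\mathbf{D}$ of $\randomtree_{\mu,n}$, invoke the non-asymptotic height bounds for uniformly random trees with a given degree sequence (the main technical results of the paper, proved prior to this theorem), and reduce the statement to showing that the rescaled empirical second moment of $\mathbf{D}$ diverges. Conditional on $\mathbf{D}$, the tree $\randomtree_{\mu,n}$ is uniformly distributed over plane trees with degree sequence $\mathbf{D}$, so those earlier bounds give a conditional tail of the form $\p(\height(\randomtree_{\mu,n}) \ge h \mid \mathbf{D}) \le C\exp(-c h^2 s_2(\mathbf{D})/n)$, where $s_2(\mathbf{d}) := n^{-1}\sum_v d_v(d_v-1)$; in particular $\E[\height(\randomtree_{\mu,n}) \mid \mathbf{D}] \le C\sqrt{n/s_2(\mathbf{D})}$. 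It therefore suffices to prove that $s_2(\mathbf{D}) \to \infty$ in probability, after which convergence in expectation will follow by combining the deterministic bound $\height \le n-1$ with the Gaussian-type conditional tail above.

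For the unconditional input, I use the classical fact that $\mathbf{D}$ has the law of $(X_1,\ldots,X_n)$ iid from $\mu$ conditioned on $S_n := \sum_i X_i = n-1$. Fix $K > 0$. Since $|\mu|_2 = \infty$, one can choose $M = M(K)$ with $\E[W_1] > 2K$ for $W_i := X_i(X_i-1)\I{X_i \le M}$. The $W_i$ are iid and bounded by $M^2$, so Cram\'er--Chernoff yields
\[
	\p(s_2(\mathbf{X}) \le K) \;\le\; \p\!\left(n^{-1}\sum_i W_i \le K\right) \;\le\; e^{-cn}
\]
for some $c = c(K,M) > 0$.

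The main obstacle is transferring this unconditional exponential decay to the conditioned law. In the critical case $|\mu|_1 = 1$, a standard local limit theorem for centred integer-valued random walks provides a polynomial lower bound $\p(S_n = n-1) \ge n^{-A}$ along the subsequence on which this probability is positive; combined with the exponential upper bound, this gives $\p(s_2(\mathbf{D}) \le K) = O(n^A e^{-cn}) \to 0$, as needed. In the strictly subcritical case $|\mu|_1 < 1$, I would first try to reduce to the critical setting via the Kesten tilt $\tilde\mu_k \propto \theta^k\mu_k$, which preserves both $|\tilde\mu|_2 = \infty$ and the law of the conditioned tree. When $\mu$ has polynomial tails so that no critical tilt exists, the conditioned tree exhibits Janson's condensation phenomenon: a single vertex has degree of order $(1-|\mu|_1)n$, so $s_2(\mathbf{D}) = \Omega(n)$ holds trivially, and the conditional bound immediately gives $\height(\randomtree_{\mu,n}) = O(1)$ in probability. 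Unifying these two regimes into a single clean statement, and in particular handling the borderline where the Kesten tilt just fails, is the delicate point of the argument.
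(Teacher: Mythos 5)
Your high-level strategy — condition on the degree sequence $\sequence{D}$, invoke the conditional height bounds for uniformly random trees with a fixed degree sequence, and then show that the empirical second moment $\sigma_{\sequence{D}}^2$ diverges — is exactly the paper's. Where you diverge, and where gaps appear, is in how you establish that $\sigma_{\sequence{D}}^2 \to \infty$ in probability.

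The paper's route (Theorems~\ref{thm.simplygen} and~\ref{thm.janson}, Corollary~\ref{cor.simplygen}) avoids the conditioned-i.i.d.\ representation entirely. It invokes Janson's Theorem~11.4 on the asymptotic degree distribution of simply generated trees as a black box, which states that $N_k(\sequence{w},n)/n \to \hat{\mu}_k$ with exponentially small error uniformly in $k$. The crucial trick, which has no analogue in your sketch, is then purely combinatorial: because the degrees of any $n$-vertex tree sum to exactly $n-1$, while $\sum_k k\hat{\mu}_k = \nu < 1$ in the subcritical case, an amount $\approx(1-\nu)n$ of degree mass must be carried by large-degree vertices. That alone forces $\sigma_{\sequence{D}}^2 = \Omega(1)$ to fail, i.e.\ $\sigma_{\sequence{D}}^2 > C$ with exponentially high probability, for every $C$. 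The critical case $\nu=1$, $\hat{\sigma}^2=\infty$ is handled by an even more direct truncation of the sum $\sum_k k(k-1)N_k/n$. This mechanism unifies both regimes: you never need a local limit theorem, never need to distinguish critical from subcritical, and never need to invoke condensation as a separate phenomenon.

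Your route has three concrete problems.
First, the conditional tail you posit, $\p(\height > h \mid \sequence{D}) \le C\exp(-ch^2 \sigma_{\sequence{D}}^2/n)$, is not what Theorem~\ref{thm.infinitevariance} provides. The actual bound is $\p(\height > xn^{1/2}\log(\sigma'+1)/\sigma_{\sequence{D}}) \le 4\exp(-x\log(\sigma'+1)/2^{14})$, which after a change of variables is exponential (not Gaussian) in $h$ with rate $\sim \sigma_{\sequence{D}}/n^{1/2}$. Both give $\E[\height\mid\sequence{D}] = o(n^{1/2})$ when $\sigma_{\sequence{D}}\to\infty$, so this is recoverable, but as stated your intermediate claim is false and would need replacing by the bound that actually exists. (Theorem~\ref{thm.gaussiantails} has a Gaussian tail, but its rate $\delta \le 1$ does not improve as $\sigma_{\sequence{D}}$ grows, so it is useless here.)
Second, in the critical case you invoke ``a standard local limit theorem'' to get a polynomial lower bound on $\p(S_n = n-1)$. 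But $|\mu|_1=1$ and $|\mu|_2=\infty$ does \emph{not} place $\mu$ in the domain of attraction of any stable law; the offspring tail can be as irregular as it likes subject to $\sum k^2\mu_k = \infty$, and the classical Gnedenko-type LLT need not apply. You would at minimum need a polynomial anti-concentration bound for general centered aperiodic integer walks, which is not standard.
Third, you acknowledge yourself that the subcritical case is unresolved — ``handling the borderline where the Kesten tilt just fails is the delicate point.'' In fact the situation is worse than a borderline: $|\mu|_2=\infty$ forces the generating function's radius of convergence to be $1$, so the Kesten tilt \emph{never} exists in the subcritical regime covered by Theorem~\ref{thm.biensubcriticalnoexpdecay} (which is the companion result proved simultaneously in the paper). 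You fall back on condensation, but making condensation quantitative (getting the max degree to be $\Omega(n)$ with probability tending to $1$, for arbitrary heavy-tailed $\mu$) is itself a nontrivial result you would have to cite or reprove. The paper sidesteps all of this with the single constraint $\sum_k kN_k = n-1$.

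In short: right skeleton, but the load-bearing step — controlling $\sigma_{\sequence{D}}$ — is incomplete in both the critical and subcritical subcases, and the route you chose (i.i.d.\ conditioning plus LLT plus condensation) is considerably more fragile than the route the paper actually takes (Janson's degree-distribution theorem plus a counting constraint).
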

Theorem \ref{thm.bieninfinitevariance} answers an open question from \cite{MR3077536}. 

\begin{thm}\label{thm.biensubcriticalnoexpdecay}
	Fix a probability distribution $\mu=(\mu_k,k\geq 0)$ supported on $\N$ with $|\mu|_1<1$ and with $\sum_{k\geq 0} e^{tk}\mu_k=\infty$ for all $t>0$. Then, $n^{-1/2}\height(\randomtree_{\mu,n})\to0$ as $n\to\infty$ over all $n$ such that $\p(|\randomtree_\mu|=n)>0$,  both in probability and in expectation.
\end{thm}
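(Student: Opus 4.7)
The plan is to reduce to the height bound for uniformly random plane trees with a prescribed degree sequence (established earlier in the paper) and then to show that the out-degree sequence of $\randomtree_{\mu,n}$ has, with high probability, the structure needed to force that bound to be $o(\sqrt{n})$.

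The reduction is standard: conditionally on its out-degree multiset $\dseq$, the tree $\randomtree_{\mu,n}$ is uniformly distributed among plane trees realising $\dseq$. In the present subcritical no-exponential-moment regime, $\randomtree_{\mu,n}$ exhibits \emph{condensation}: with probability tending to $1$, a unique vertex $v^{*}$ carries out-degree $\Delta_n = (1-|\mu|_1)n(1+o_\p(1))$, while the remaining out-degrees are small and, as a collection, close to an i.i.d.\ $\mu$-sample. One would establish this from the Dwass/cycle-lemma identity
\[
\p(|\randomtree_\mu| = n) = \frac{1}{n}\,\p(X_1 + \cdots + X_n = n - 1), \qquad X_i \text{ i.i.d.\ } \sim \mu,
\]
combined with the one-big-jump principle for subexponential sums, yielding in particular (a) $d_{v^{*}} = (1-|\mu|_1)n(1+o_\p(1))$, (b) $\max_{v \neq v^{*}} d_v = o_\p(\sqrt{n})$, and (c) $\sum_{v \neq v^{*}} d_v(d_v - 1) = o_\p(n)$.

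The earlier degree-sequence height bound, applied to $\dseq$ with the hub's contribution split off (a single high-degree vertex adds at most $1$ to the height along any root-to-leaf path), converts (a)--(c) into an in-probability bound $\height(\randomtree_{\mu,n}) = o_\p(\sqrt{n})$. Convergence in expectation follows by uniform integrability, using the unconditional $O(\sqrt{n})$ upper bound on $\e[\height(\randomtree_{\mu,n})]$ supplied by the paper's non-asymptotic main result.

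The main obstacle is the refinement (b): under the sole assumption of no exponential moment, showing that the \emph{second-largest} out-degree is truly $o(\sqrt{n})$ (not merely $o(n)$) requires a careful quantitative one-big-jump estimate controlling the conditional law of the bulk degrees given the hub. Without polynomial tails or regular-variation assumptions on $\mu$, this calibration is the technical heart of the proof; the rest is the by-now-standard translation from degree-sequence statistics to height via the earlier theorem.
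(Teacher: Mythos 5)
Your reduction to uniform trees with a fixed degree sequence (via Proposition~\ref{prop:planetrees}) is correct and is indeed the first step in the paper's argument, but the condensation route you then take has a genuine gap and is also more complicated than necessary.

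The concrete gap is claim~(b): it is false that $\max_{v\neq v^{*}} d_v = o_\p(\sqrt n)$ under the hypotheses of Theorem~\ref{thm.biensubcriticalnoexpdecay}, because the theorem does not assume $|\mu|_2 < \infty$. If, say, $\mu_k \asymp k^{-\alpha}$ with $2<\alpha<3$ (shifted/scaled so that $|\mu|_1<1$), the bulk out-degrees are essentially i.i.d.\ $\mu$, whose maximum over $n$ samples is of order $n^{1/(\alpha-1)} \gg \sqrt{n}$. So the step you flag as the ``technical heart'' cannot be carried out in general, and the ``hub split off, apply the degree-sequence bound to the rest'' step (which is itself left vague: removing the hub yields a forest, not a tree, so it is not a direct application of the earlier theorem) would inherit that failure. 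Moreover (c) is redundant given (a): once a vertex of degree $\Theta(n)$ exists, $\sigma_{\sequence{d}}^2 = n^{-1}\sum_i d_i(d_i-1) \gtrsim n$ regardless of the other degrees, so refining the bulk is pointless.

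What the argument actually needs, and what the paper uses, is much cruder. The paper feeds the degree sequence into Theorem~\ref{thm.infinitevariance}, which bounds $\height(\randomtree)$ by $n^{1/2}\log(\sigma'+1)/\sigma_{\sequence{d}}$ with high probability; this is $o(\sqrt{n})$ as soon as $\sigma_{\sequence{d}}\to\infty$ (with $n_1/n$ bounded away from $1$, which Janson's theorem guarantees here). To show $\sigma_{\sequence{D}}\to\infty$ in probability, the paper does not prove condensation or isolate a hub at all. It invokes Janson's Theorem~\ref{thm.janson} to concentrate the degree statistics around the law $\hat\mu$, and then uses a short mass-dichotomy argument (Corollary~\ref{cor.simplygen}): since $\sum_k k\hat\mu_k = \nu < 1$ while $\sum_k k N_k = n-1$, degrees below any fixed threshold $K$ can carry at most roughly $\nu n$ of the total degree, so at least $\tfrac{1-\nu}{4}n$ is carried by degrees $\geq K$, giving $\sum_i D_i(D_i-1) \geq K\tfrac{1-\nu}{4}n$ and hence $\sigma_{\sequence{D}}$ as large as desired. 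Convergence in expectation then follows not from a separate $O(\sqrt n)$ bound and uniform integrability, but from the exponential tail in Theorem~\ref{thm.infinitevariance} together with the exponentially small probability of a bad degree sequence. So: right reduction, but a false intermediate claim and an overbuilt middle portion; the correct route needs only a lower bound on $\sigma_{\sequence{D}}$, not a description of the condensate.
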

Theorem~\ref{thm.biensubcriticalnoexpdecay} solves a slight variant of Problem~21.7 from~\cite{janson_simpygen}. That problem is stated for  simply generated trees, which are a slight generalization of Bienaym\'e trees. We in fact solve the problem from~\cite{janson_simpygen} in full; see Theorem~\ref{thm.simplygen}, below. 

A collection $(X_i)_{i \in I}$ of random variables is {\em sub-Gaussian} if there exist constants $c,C>0$ such that $\p(X_i \ge x) \le C\exp(-cx^2)$ for all $i \in I$ and all $x>0$. Our third theorem states that for any offspring distribution $\mu=(\mu_k,k\geq 0)$, the family of random variables ($n^{-1/2}\height(T_{\mu,n})$) has sub-Gaussian tails, with constants $c,C$ that only depend on $\mu_0$ and $\mu_1$.
\begin{thm}\label{thm.biengaussiantails}
	Fix $\epsilon \in [0,1)$. Then, there exist constants $c=c(\epsilon)$ and $C=C(\epsilon)$ such that the following holds. For any probability distribution $\mu=(\mu_k,k\geq 0)$ on $\N$ with $\mu_0+\mu_1<1-\epsilon$ and $\mu_0/(\mu_0+\mu_1)>\epsilon$, for all $n$ sufficiently large and for which $\p(|\randomtree_\mu|=n)>0$, for all $x > 0$,
	\[\p\left( \height(\randomtree_{\mu,n})\ge xn^{1/2}\right)< C\exp(-cx^2).\]
\end{thm}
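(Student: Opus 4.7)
The plan is to condition on the random degree sequence of $\randomtree_{\mu,n}$ and reduce the theorem to two pieces: a deterministic sub-Gaussian tail bound for the uniform random plane tree with a given ``balanced'' degree sequence, and a large-deviations estimate showing that the random degree sequence of $\randomtree_{\mu,n}$ is balanced except with exponentially small probability. It is classical that, conditionally on its degree sequence $\mathbf{d}$, the tree $\randomtree_{\mu,n}$ is uniform over the finite set of plane trees with degree sequence $\mathbf{d}$; write $\tree_{\mathbf{d}}$ for such a uniform tree. Call a degree sequence of length $n$ \emph{$\alpha$-balanced} if at least $\alpha n$ of its entries equal $0$ and at least $\alpha n$ are $\geq 2$.

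The first piece is a companion statement, which I anticipate to be established earlier in the paper via the line-breaking bijection of \cite{us} (possibly combined with the stochastic domination of degree-sequence trees by their sub-binary refinements alluded to in the abstract): for every $\alpha \in (0,1)$ there exist constants $C_\alpha, c_\alpha > 0$ such that for every $\alpha$-balanced degree sequence $\mathbf{d}$ of length $n$ and every $x > 0$, $\p(\height(\tree_{\mathbf{d}}) > x\sqrt{n}) \leq C_\alpha \exp(-c_\alpha x^2)$. The remaining task is then to exhibit $\alpha = \alpha(\epsilon) > 0$ for which $\mathbf{d}(\randomtree_{\mu,n})$ is $\alpha$-balanced with probability so close to $1$ that the unbalanced contribution is absorbed into $C\exp(-cx^2)$ uniformly in $x \geq 1$.

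For the concentration step, use the standard representation $\mathbf{d}(\randomtree_{\mu,n}) \eqdist (\xi_{\sigma(1)}, \ldots, \xi_{\sigma(n)})$, with $\xi_1, \ldots, \xi_n$ iid of law $\mu$ conditioned on $\sum_i \xi_i = n - 1$ and $\sigma$ an independent uniform permutation of $[n]$. The counts $N_0 = \#\{i : \xi_i = 0\}$ and $N_{\geq 2} = \#\{i : \xi_i \geq 2\}$ are sums of bounded Bernoulli indicators, amenable to Cram\'er-type large deviations even without any moment assumption on $\mu$. The identity $N_0 = 1 + \sum_{k \geq 2}(k-1)N_k \geq 1 + N_{\geq 2}$, valid in every plane tree of size $n$, means it suffices to lower-bound $N_{\geq 2}$: the hypothesis $\mu_0 + \mu_1 < 1 - \epsilon$ supplies this. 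The complementary hypothesis $\mu_0/(\mu_0+\mu_1) > \epsilon$ ensures that conditioning on $\sum_i \xi_i = n - 1$ cannot tilt the marginal so as to convert nearly all zeros into ones, the only mechanism by which $N_0$ could be lost once $N_{\geq 2}$ is controlled.

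The principal difficulty is making this concentration robust without any moment assumption on $\mu$. The event $\{\sum_i \xi_i = n-1\}$ may have very small probability and the tilt it induces on each marginal can be delicate when $\mu$ has a heavy upper tail or, more severely, a large atom on some value $k \asymp n$. The two hypotheses of the theorem are chosen precisely to yield lower bounds on the tilted probabilities of $\{\xi_i = 0\}$ and $\{\xi_i \geq 2\}$ that are uniform in the upper tail of $\mu$; once these are established, Cram\'er-type inequalities furnish exponential concentration of $N_0$ and $N_{\geq 2}$ with rates depending only on $\epsilon$, and combining with the companion statement yields the sub-Gaussian bound with constants $c, C$ depending only on $\epsilon$, as claimed.
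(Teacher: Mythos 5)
Your overall strategy -- condition on the degree sequence, prove a deterministic sub-Gaussian tail bound for uniform trees with a ``good'' degree sequence, then show that $\randomtree_{\mu,n}$ has a good degree sequence outside an exponentially small event -- is exactly the paper's route (via simply generated trees, Janson's concentration theorem for the degree counts $N_k$, and the degree-sequence tail bound of Theorem~\ref{thm.gaussiantails}). The gap is in what ``good'' must mean. Your notion of $\alpha$-balanced requires both $N_0 \geq \alpha n$ and $N_{\geq 2} \geq \alpha n$, and you claim the hypothesis $\mu_0+\mu_1 < 1-\epsilon$ furnishes the second bound after the conditioning tilt. This fails. Take $\mu_0 = \mu_K = \tfrac12$ with $K$ large: both hypotheses hold (for $\epsilon < \tfrac12$), yet the tilted law $\hat\mu_k = \mu_k\tau^k/\Phi(\tau)$ with $\Psi(\tau)=1$ satisfies $\tau^K = 1/(K-1)$, $\Phi(\tau)=K/(2(K-1))$, and hence $\hat\mu_K = 1/K$, $\hat\mu_0 = (K-1)/K$. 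So $N_{\geq 2}/n \approx 1/K$ is arbitrarily small, the degree sequence is typically \emph{not} $\alpha$-balanced for any uniform $\alpha = \alpha(\epsilon)$, and your companion statement (even if established) does not apply. The tree identity $N_0 = 1+\sum_{k\geq 2}(k-1)N_k \geq 1+N_{\geq 2}$ is of course correct, but it shows that bounding $N_{\geq 2}$ below is \emph{sufficient} to bound $N_0$, not \emph{necessary} -- in the counterexample $N_0$ is large even though $N_{\geq 2}$ is small, because the rare high-degree vertices each contribute many leaves.

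The fix is that the companion statement should only require $n_1(\dseq)/n$ to be bounded away from $1$, which is strictly weaker than $\alpha$-balancedness: this is exactly the paper's Theorem~\ref{thm.gaussiantails}, whose exponent depends only on $\delta = (n - n_1(\dseq))/n$. Under your hypotheses, the quantity that \emph{is} uniformly controlled is $\hat\mu_1$: the paper's proof verifies $\hat\mu_1 < 1-\epsilon$ directly by writing $\hat\mu_1 = \mu_1\tau/\Phi(\tau)$ and splitting on $\tau \geq 1$ (where $\Phi(\tau) > \mu_1\tau + \epsilon\tau$ by $1-\mu_0-\mu_1>\epsilon$) versus $\tau < 1$ (where $\Phi(\tau) > \mu_0 + \mu_1\tau$ and one uses $\mu_0/(\mu_0+\mu_1)>\epsilon$). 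Once $\hat\mu_1 < 1-\epsilon$, Janson's theorem (cited in the paper as Theorem~\ref{thm.janson}, and playing the role of your Cram\'er step) shows $N_1/n < 1-\epsilon/2$ with exponentially high probability, and Theorem~\ref{thm.gaussiantails} closes the argument. Your intuition that the second hypothesis prevents the tilt from converting zeros into ones is pointing in the right direction, but the correct target quantity is $\hat\mu_1$ alone, not $\hat\mu_{\geq 2}$ -- and in fact one can check that $\hat\mu_1 < 1-\epsilon$ together with $\hat\mu$ having mean at most $1$ forces $\hat\mu_0 > \epsilon/2$, whereas it forces no lower bound at all on $\hat\mu_{\geq 2}$.
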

This theorem has the following immediate corollary. 

\begin{cor}\label{cor.expectedheightbien}
For any probability distribution $\mu=(\mu_k,k\geq 0)$ on $\N$ with $\mu_0+\mu_1<1$, $\e[\height(\randomtree_{\mu,n})]=O(n^{1/2})$, and, more generally, for any fixed $r<\infty$, $\e[\height(\randomtree_{\mu,n})^r]=O(n^{r/2})$ as $n\to\infty$ over all $n$ such that $\p(|\randomtree_\mu|=n)>0$. \qed 
\end{cor}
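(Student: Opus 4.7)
The plan is to obtain Corollary~\ref{cor.expectedheightbien} as a direct consequence of Theorem~\ref{thm.biengaussiantails}, by choosing $\epsilon$ appropriately for the given $\mu$ and then converting the sub-Gaussian tail bound into a moment bound via the layer-cake formula.

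First, I would dispose of a degenerate case. If $\mu_0=0$, then every individual in the branching process has at least one child, so $|\randomtree_\mu|=\infty$ almost surely and there is no $n\in\N$ with $\p(|\randomtree_\mu|=n)>0$; the corollary is then vacuous. Hence I may assume $\mu_0>0$.

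Next, I would select $\epsilon$ so that the hypothesis of Theorem~\ref{thm.biengaussiantails} is satisfied. Since $\mu_0+\mu_1<1$ and $\mu_0>0$, the quantity
\[
\epsilon=\tfrac{1}{2}\min\!\Bigl(1-(\mu_0+\mu_1),\ \mu_0/(\mu_0+\mu_1)\Bigr)
\]
is strictly positive and satisfies both $\mu_0+\mu_1<1-\epsilon$ and $\mu_0/(\mu_0+\mu_1)>\epsilon$. Theorem~\ref{thm.biengaussiantails} then furnishes constants $c,C>0$ and a threshold $n_0$ (all depending on $\mu$) such that $\p(\height(\randomtree_{\mu,n})>xn^{1/2})\leq C\exp(-cx^2)$ for every $n\geq n_0$ with $\p(|\randomtree_\mu|=n)>0$ and every $x>0$.

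For such $n$, the layer-cake identity $\e[Z^r]=\int_0^\infty r x^{r-1}\p(Z>x)\,dx$, applied with $Z=\height(\randomtree_{\mu,n})/n^{1/2}$, gives
\[
\e[\height(\randomtree_{\mu,n})^r]=n^{r/2}\int_0^\infty r x^{r-1}\p(Z>x)\,dx\leq n^{r/2}\int_0^\infty r x^{r-1}\min(1,Ce^{-cx^2})\,dx,
\]
and the last integral is a finite constant depending only on $c,C,r$; this yields the desired bound for $n\geq n_0$. For the finitely many remaining $n<n_0$ with $\p(|\randomtree_\mu|=n)>0$, one may bound $\height(\randomtree_{\mu,n})\leq n-1<n_0$ deterministically, so those values contribute only an additive $O(1)$, which is $O(n^{r/2})$ as $n\to\infty$. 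Since Theorem~\ref{thm.biengaussiantails} does all the nontrivial probabilistic work, the only real subtlety is arranging the condition $\mu_0/(\mu_0+\mu_1)>\epsilon$ of that theorem, which is what motivates the separate treatment of $\mu_0=0$.
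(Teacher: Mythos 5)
Your proof is correct and takes essentially the approach the authors intend (the paper simply labels the corollary ``immediate'' and gives no details): pick $\epsilon$ small enough that the hypotheses of Theorem~\ref{thm.biengaussiantails} are met, integrate the sub-Gaussian tail to get the moment bound, and handle the finitely many small $n$ trivially. Your observation that $\mu_0=0$ makes the statement vacuous is a nice bit of care, as it is exactly what guarantees $\mu_0/(\mu_0+\mu_1)>0$ so that a valid $\epsilon$ exists.
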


Theorem \ref{thm.biengaussiantails} and Corollary \ref{cor.expectedheightbien} strengthen Theorem 1.2 and Corollary 1.3 in \cite{MR3077536}, as they are not restricted to critical offspring distributions with finite variance and the bounds only depend on $\mu$ via $\mu_0$ and $\mu_1$. This resolves conjectures stated in  \cite{addarioberryfattrees} and \cite{MR3077536}. Again, this theorem also holds for the larger class of simply-generated trees, stated as Theorem \ref{thm.simplygengaussian} below, solving Problem 21.9 from \cite{janson_simpygen}.  Theorem \ref{thm.simplygengaussian} also resolves a conjecture by McDiarmid and Scott on the \emph{block-diameter} of a family of random graphs called \emph{block-graphs} \cite{McDiarmidScott}; see Theorem \ref{thm:blockgraphs}. (In fact, our result is stronger than their conjecture; McDiarmid and Scott are interested in tightness of the diameter under rescaling, while our results also provide Gaussian tail bounds.) We provide more details in Section \ref{sec:blockgraphs}.

\noindent $\star$ The requirement in Theorem~\ref{thm.biengaussiantails} that $n$ is sufficiently large (where ``sufficiently large'' depends on $\mu$) is necessary. To see this, note that if $\mu$ has support $\{0,1,N\}$ then for any $n < N+1$, with probability one $\randomtree_{\mu,n}$ is a path (so has height $n-1$). 
The requirement in Theorem~\ref{thm.biengaussiantails} that $1-\mu_0-\mu_1$ and $\mu_0/(\mu_0+\mu_1)$ be bounded from below is also necessary. To see this, it suffices to consider probability distributions $\mu$ of the form 
\[
\mu_0 = q(1-p)\qquad
\mu_1 = (1-q)(1-p)\qquad
\mu_2 = p\, ,
\]
with $p,q\in (0,1)$. 
For any $x > 1$, it is possible to make $\liminf_{n \to \infty} \p(\height(\randomtree_{\mu,n})> xn^{1/2})$ arbitrarily close to one by 
either taking $p$ fixed and $q$ sufficiently small or $q$ fixed and $p$ sufficiently small; this fact is proved as Claim \ref{claim:smalldegreesyieldtalltrees}, below. 

For random variables $X,Y$, we write $X \preceq_{\mathrm{st}} Y$ to mean that $Y$  stochastically dominates $X$, which is to say that $\p(X \le t) \ge \p(Y \le t)$ for all $t \in \R$.  We also write $X \prec_{\mathrm{st}} Y$, and say that $Y$ strictly stochastically dominates $X$, if $X \preceq_{\mathrm{st}} Y$ and $X$ and $Y$ are not identically distributed.
Our final result for Bienaym\'e trees states that, among all conditioned Bienaym\'e trees which almost surely have no vertices with exactly $1$ child, the binary Bienaym\'e trees have the stochastically largest heights. \begin{thm}\label{thm:bienstoch}
Fix any probability distribution $\mu=(\mu_k,k\geq 0)$ on $\N$ with $\mu_0 \in (0,1)$ and $\mu_1=0$, and let $\nu$ be the probability distribution on $\N$ with $\nu(0)=\nu(2)=1/2$. Then for all $n \ge 1$ such that $\p(|\randomtree_{\mu}|=n) > 0$, 
\begin{align*}
\height(\randomtree_{\mu,n}) & \preceq_{\mathrm{st}} \height(\randomtree_{\nu,n})\mbox{ if  $n$ is odd}\, \\
\height(\randomtree_{\mu,n}) & \preceq_{\mathrm{st}} \height(\randomtree_{\nu,n+1})\mbox{ if  $n$ is even.}\, 
\end{align*}
\end{thm}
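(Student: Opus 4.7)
The plan is to condition on the out-degree sequence of $\randomtree_{\mu,n}$ and invoke the partial order on degree sequences developed elsewhere in the paper, which asserts that $\mathbf{d}\preceq \mathbf{d}'$ induces a stochastic dominance on heights of uniformly random plane trees with degree sequences $\mathbf{d}$ and $\mathbf{d}'$.

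Conditional on its out-degree multiset $\mathbf{d}$, the tree $\randomtree_{\mu,n}$ is uniform over $\mathcal{T}(\mathbf{d})$, the set of plane trees with that degree sequence. Because $\mu_1=0$ forces $n_1(\mathbf{d})=0$, the leaf count $\ell=n_0(\mathbf{d})$ satisfies $\ell\ge (n+1)/2$ --- this follows from $\sum_k k\, n_k=n-1$ together with the fact that every non-leaf has out-degree at least two --- with equality only when $n$ is odd and $\mathbf{d}$ is the binary degree sequence of size $n$. In that boundary case the conditional law of $\randomtree_{\mu,n}$ is exactly that of $\randomtree_{\nu,n}$, so the domination is immediate; the substance is the case $\ell>(n+1)/2$.

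For $\ell>(n+1)/2$ the plan is to exhibit a chain of partial-order moves leading from $\mathbf{d}$ to the binary degree sequence $\mathbf{d}^\star$ of size $n^\star$ (which is $n$ if $n$ is odd and $n+1$ if $n$ is even), and then to integrate the resulting conditional stochastic dominance over the law of $\mathbf{d}$ under $\randomtree_{\mu,n}$. Each individual natural move --- \emph{splitting} a degree-$k$ vertex with $k\ge 3$ into a chain of smaller-degree vertices, or \emph{removing} a leaf whose parent has out-degree at least $3$ --- changes the total number of vertices, but these can be paired so that the net size change is zero while gradually reducing every out-degree into the set $\{0,2\}$.

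The main obstacle is to verify that such paired moves (or an equivalent family) lie inside the partial order on degree sequences, and so induce a stochastic dominance of heights between uniformly random plane trees with the corresponding degree sequences. I would use the line-breaking construction from \cite{us} as the natural framework for this step, since it provides a coupling of uniformly random plane trees with comparable degree sequences under which the height is manifestly monotone; iterating these couplings should yield the full chain of comparisons and hence the desired stochastic domination.
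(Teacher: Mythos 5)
Your high-level plan matches the paper's: both condition on the out-degree data, both invoke the stochastic monotonicity of heights in the partial order $\prec_1$ on fixed-length degree sequences (Theorem~\ref{thm.stochasticorder2}), and both reduce to showing that the degree sequence of $\randomtree_{\mu,n}$ can be driven to the binary degree sequence of size $n^\star$ by moves that stochastically increase the height. However, the step you flag as ``the main obstacle'' is precisely the content of Corollary~\ref{cor:unary_stochast}, and you leave it unresolved. Two concrete gaps remain.

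First, the moves you propose --- ``split a high-degree vertex into a chain'' and ``remove a leaf'' --- change the length of the degree sequence, while a single covering step for $\prec_1$ preserves the length and replaces a pair $(a,b)$ with $a\le b$ by $(a-1,b+1)$. When you pair a split with a leaf removal you do recover such a transfer (e.g.\ $(4,0)\to(3,1)\to(2,2)$, the first move of Corollary~\ref{cor:unary_stochast}), but you never verify this, and the reduction is genuinely case-dependent: one must separately treat maximum degree $\ge 4$, the presence of two or more degree-$3$ vertices, and a single remaining degree-$3$ vertex. Your sketch does not account for the fact that the final case can get ``stuck'' with one degree-$3$ vertex left and nothing to pair it with.

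Second, and more seriously, the parity adjustment is not a $\prec_1$ move at all. When $n$ is even and $n_1(\dseq)=0$, one must pass from a degree sequence of length $n$ to a binary degree sequence of length $n+1$, but $\prec_1$ only compares sequences of the same length. The paper handles this (in the proofs of Corollaries~\ref{cor:sub_binary} and~\ref{cor:unary_stochast}) by a separate, direct coupling: append an entry $1$, couple $\randomtree_{\dseq}$ with $\randomtree_{\dseq^+}$ by contracting the two-edge path through the new vertex $n+1$, observe that this only decreases the height, and only then apply $\prec_1$ to trade the unary vertex for a degree reduction. Without an argument of this type, your chain cannot reach the binary sequence when $n$ is even, and the statement for even $n$ is unproved. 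In short, the strategy is the paper's, but the explicit construction of the comparison chain --- especially the length-increasing coupling --- is the real work of the proof and is missing from your proposal.
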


The probability measure $\nu$ in the preceding theorem could be replaced by any nondegenerate probability measure $\nu'$ with $\nu'(0)+\nu'(2)=1$, as for any such measure and any $m\in \N$, the tree $\randomtree_{\nu',2m+1}$ is uniformly distributed over rooted binary plane trees with $m$ internal nodes and $m+1$ leaves. The parity requirement is due to the fact that binary trees always have an odd number of vertices. Also, the requirement that $\mu_1=0$ is necessary, since if $\mu_1>0$, then  with positive probability $\randomtree_{\mu,2m}$ (resp. $\randomtree_{\mu,2m+1}$) is a path of length $2m-1$ (resp. $2m$), whereas $\randomtree_{\nu,2m+1}$ has height at most $m$.

The preceding theorem has the following consequence for tail bounds on the height of conditioned Bienaym\'e trees. Fix any constants $c$ and $C$ such that the inequality in Theorem \ref{thm.biengaussiantails} holds for uniform binary trees $\randomtree_{\nu,n}$. Then, the same inequality also holds for $\randomtree_{\mu,n}$ for any probability distributions $\mu$ with $\mu(1)=0$. In particular, by the remark above, we see that if we restrict the theorem to such probability distributions, the constants need not depend on $\epsilon$. 

\subsection{Trees with fixed degree sequences}
The above results for Bienaym\'e trees are consequences of more refined results, presented in this subsection, on the heights of random trees with fixed degree sequences. 

For a rooted tree $\tree$ and a vertex $v$ of $\tree$, the {\em degree} $\deg_\tree(v)$ of $v$ is the number of children of $v$ in $\tree$. A {\em leaf} of $\tree$ is a vertex of $\tree$ with degree zero. 

A {\em degree sequence} is a sequence of non-negative integers $\dseq=(d_1,\ldots,d_n)$ for which $\sum_{i \in [n]} d_i = n-1$. We write $\cT_{\dseq}$ for the set of finite rooted labeled trees $\tree$ with vertex set labeled by $[n]$ and such that for each $i \in [n]$, the vertex with label $i$ has degree $d_i$. 
Also write $\cT(n)$ for the set of all finite rooted labeled trees with vertex set labeled by $[n]$. For $\tree \in \cT(n)$, we write $d_\tree(i)$ for the degree of the vertex with label $i$ and  say that $(d_\tree(1),\ldots,d_\tree(n))$ is the degree sequence of~$\tree$. 

For $p>0$ and a degree sequence $\dseq=(d_1,\dots,d_n)$ write $|\dseq|_p=\left(\sum_{i=1}^n d_i^p\right)^{1/p}$ and let $(\sigma_\dseq)^2=\frac{1}{n}\sum_{i=1}^n d_i(d_i-1)$. Also, for $i \ge 0$ write $n_i(\dseq) = |\{j \in [n]: d_j=i\}|$ for the number of entries of $\dseq$ which equal $i$.

Given a finite set $S$, we write $X \in_u S$ to mean that $X$ is a uniformly random element of~$S$. 
\begin{thm}\label{thm.gaussiantails}
Fix any degree sequence $\dseq=(d_1,\ldots,d_n)$, let $\randomtree \in_u \cT_{\dseq}$, and write $\delta=(n-n_1(\dseq))/n$. Then for all $x > 0$, 
		\[\p\left(\height(\randomtree)\ge xn^{1/2}\right)<5\exp(- \delta x^2/2^{12}).\]
\end{thm}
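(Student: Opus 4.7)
My plan is to exploit the line-breaking bijection of \cite{us} referenced in the abstract. I expect this bijection to construct $\randomtree \in_u \cT_{\dseq}$ by iteratively attaching ``lines'' (paths of consecutive vertices, each with a single child except the last) to a growing tree. Each line ends either at a leaf or at a non-unary branching vertex, so the total number of lines equals $k := n - n_1(\dseq) = \delta n$. In the construction, the lengths $(L_1, \ldots, L_k)$ are random with joint distribution determined by $\dseq$, and line $i$ is attached to a uniformly random vertex among the $L_1 + \cdots + L_{i-1}$ vertices already placed. This representation reduces the control of $\height(\randomtree)$ to controlling sums of line lengths along chains of successive attachments.

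The first substantive step is to bound the depth of the endpoint of a given line $i$. Following attachments backward from line $i$, one obtains a decreasing sequence of line indices $i = j_t > j_{t-1} > \cdots > j_1$, where line $j_s$ attaches to a vertex of line $j_{s-1}$, and the endpoint of line $i$ has depth $L_{j_1} + \cdots + L_{j_t}$. Since attachment points are uniform over previously placed vertices, the backward chain $(j_s)$ evolves like a size-biased process whose length $t$ concentrates at $O(\sqrt{k})$. Combined with concentration on the $L_{j_s}$'s, an exponential tilting computation should give a sub-Gaussian tail bound on each endpoint depth, at scale $\sqrt{n/\delta}$, of the form $\exp(-c \delta y^2/n)$ at depth $y$.

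To convert this pointwise bound into a bound on $\height(\randomtree)$, I would union bound over the $k = \delta n$ line endpoints, using that every vertex lies on some line and the deepest vertex is always a line endpoint. This improves over the naive $n$-vertex union bound by a factor of $\delta$. Moreover, the target bound $5\exp(-\delta x^2/2^{13})$ exceeds $1$ whenever $\delta x^2 \le 2^{13}\log 5$, so it suffices to work in the complementary regime, where the $\log k$ overhead from the union bound can be absorbed into the large constant $2^{13}$ via careful choice of the tilting parameter.

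The main obstacle I anticipate is controlling the joint distribution of the line lengths $(L_i)$ and attachment points in the bijection: these are not independent, and the exponential moment computations require either a coupling to an exchangeable family of i.i.d.-like variables or a recursive analysis of the bijection's step-by-step conditional structure. The explicit constant $2^{13}$ should then arise from optimizing the tilting parameter and from slack introduced by the various union bounds and concentration inequalities.
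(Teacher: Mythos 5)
Your proposal takes a genuinely different route from the paper and contains a critical gap.

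The paper's proof does \emph{not} bound individual endpoint depths and union bound over them. Instead it uses a telescoping decomposition
\[
\height(\randomtree) \le \height(\randomtree_{\rho(y_0)}) + \sum_{i=1}^N \big(\height(\randomtree_{\rho(y_i)}) - \height(\randomtree_{\rho(y_{i-1})})\big) + \big(\height(\randomtree) - \height(\randomtree_{\rho(n_0-1)})\big)
\]
across geometrically spaced stages $y_i = \min(n_0-1, 2^{i-1}xn^{1/2})$, where $\rho(\cdot)$ tracks the step at which a given number of leaves has been ``accounted for'' in the sequential construction. Each height increment is controlled by a first-moment \emph{maximal inequality} (Corollary~\ref{cor.maximalinequality}): if $\height(\randomtree_{\rho(y)}) - \height(\randomtree_{\rho(x)})$ is large, then \emph{many} vertices must lie far from $\randomtree_{\rho(x)}$, and the probability of that is controlled by Markov's inequality combined with the geometric attachment estimate of Lemma~\ref{lem:geometric_attachment}. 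This conversion of ``one vertex is very deep'' into ``many vertices are moderately deep'' is exactly what lets the argument avoid the logarithmic overhead of a naive union bound; it is the central trick of the proof and your proposal does not contain an analogue.

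Concretely, the gap in your union bound is this. You want a per-endpoint bound $\exp(-c\delta y^2/n)$ and then a union bound over $\delta n$ endpoints, giving $\delta n \exp(-c\delta x^2)$ at depth $y = xn^{1/2}$. For this to be at most $5\exp(-\delta x^2/2^{13})$, you need $\delta x^2(c - 2^{-13}) \ge \log(\delta n/5)$, i.e.\ $\delta x^2 \gtrsim \log(\delta n)$. But the target bound is non-trivial for \emph{all} $\delta x^2 > 2^{13}\log 5$, which is a fixed constant. For large $n$, there is a genuine window $2^{13}\log 5 < \delta x^2 < C\log(\delta n)$ where your union bound fails, and $c$ cannot be made arbitrarily large — the depth of a single leaf already has tails no better than those of the height itself, so $c$ is constrained. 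The remark in your last paragraph that the $\log k$ overhead ``can be absorbed into the large constant $2^{13}$'' is precisely where the argument breaks: the constant is fixed, and for $n$ sufficiently large the overhead cannot be absorbed.

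Two further problems. First, your description of the line-breaking dynamics is inaccurate: in the paper's bijection, the number of paths is $n_0(\dseq)$ (leaves), not $n - n_1(\dseq)$, and attachment is not uniform over previously placed vertices but biased by remaining degree (unused child slots). Second, your claim that the backward attachment chain has length concentrating at $O(\sqrt{k})$ is wrong: under either uniform or degree-biased attachment, the chain length from a late line to the root is $\Theta(\log k)$, as in a random recursive tree. The $O(\sqrt{n})$ depth arises because line lengths along the chain are size-biased and because the attachment probability to the early tree $\randomtree_{\rho(x)}$ is $\ge x/(n-1)$ (Lemma~\ref{lem:geometric_attachment}), not because the chain is $\sqrt{k}$ steps long. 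You also do not indicate how you would actually establish the attachment estimate, which is the most technically delicate part of the paper's argument — its proof requires careful combinatorial manipulations of $\cS_{\dseq}$ — nor how you would handle degree-$1$ vertices, which the paper treats separately via the coupling in Lemma~\ref{lem:degree_one_stretch}.
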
	

		\begin{thm}\label{thm.infinitevariance}
		Fix any degree sequence $\dseq=(d_1,\ldots,d_n)$, let $\randomtree\in_u \cT_{\dseq}$, and define 
$(\sigma')^2=\frac{n}{n-n_1(\dseq)}\sigma_{\dseq}^2$. Then for all $x\geq 2^{15}$, 
		\[\p\left(\height(\randomtree) \ge  xn^{1/2} \frac{\log(\sigma'+1)}{\sigma_{\dseq}} \right)< 4\exp\left(-x\log(\sigma'+1)/2^{14}\right).\]
	\end{thm}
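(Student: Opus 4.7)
The plan is to combine Theorem~\ref{thm.gaussiantails} with a contraction argument that separates the contribution of the degree-$1$ vertices, and then use concentration of the resulting chain-length decomposition. Given $\randomtree \in_u \cT_{\dseq}$, I would define $\randomtree^*$ by suppressing every vertex of degree $1$, so that each maximal chain of degree-$1$ vertices in $\randomtree$ collapses to a single edge of $\randomtree^*$. Set $V^* = \{i \in [n] : d_i \neq 1\}$, $k = |V^*| = n - n_1(\dseq)$, and let $\dseq^*$ denote the sequence of non-$1$ entries of $\dseq$ (so $\sum_i d_i^* = k - 1$). A standard combinatorial counting shows that $\randomtree^*$ is uniform over rooted labeled trees on $V^*$ with degree sequence $\dseq^*$, and that, conditional on $\randomtree^*$, the lengths $(L_e)_{e \in E(\randomtree^*)}$ of the suppressed chains form a uniformly random non-negative integer composition of $n_1(\dseq)$ into $k-1$ parts. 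A direct computation gives $\sigma_{\dseq^*}^2 = (\sigma')^2$ and $\delta(\dseq^*) = 1$.

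Writing $\height(\randomtree) = \max_P \bigl(|P| + \sum_{e \in P} L_e\bigr)$, where $P$ ranges over root-to-leaf paths in $\randomtree^*$, I would bound the two contributions separately. For the first, Theorem~\ref{thm.gaussiantails} applied to $\randomtree^*$ yields
\[
\p\bigl(\height(\randomtree^*) > y \sqrt{k}\bigr) \leq 5 \exp(-y^2 / 2^{13}),
\]
and note that $\sqrt{k} = \sqrt{n}\,\sigma_{\dseq} / \sigma'$. For the chain-length contribution, the composition structure makes the $L_e$ exchangeable and representable as i.i.d.\ geometric random variables conditioned on their sum; for any fixed path $P$ of length $j$ in $\randomtree^*$, $\sum_{e \in P} L_e$ is concentrated around its mean $jm$, with $m = n_1/(k-1)$, with Bernstein-type sub-exponential tails. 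Union-bounding over the at most $k$ root-to-leaf paths of $\randomtree^*$ controls $\max_P \sum_{e \in P} L_e$ and naturally introduces a logarithmic overhead of the right shape to match the $\log(\sigma'+1)$ factor in the target.

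The delicate point, and the main obstacle, is matching the precise scaling $\sqrt{n} \log(\sigma'+1) / \sigma_{\dseq}$: a naive combination of the two bounds yields a leading term of order $\sqrt{k}(1 + m) \sim \sqrt{n}\,\sigma' / \sigma_{\dseq}$ coming from the chain-sum along the longest path in $\randomtree^*$, which is too large when $\sigma'$ is big. Closing this gap requires exploiting the competition between long paths in $\randomtree^*$ and large chain sums: with $\sum_e L_e = n_1$ fixed, a very long path in $\randomtree^*$ cannot simultaneously concentrate an unusually large chain total. I would therefore aim for a joint tail bound in which the length of $P$ in $\randomtree^*$ is traded off against its allocated chain mass---for example, by conditioning on $\height(\randomtree^*)$ and using a rearrangement-style argument over all paths simultaneously---so that the $\log(\sigma'+1)$ factor emerges cleanly from the sub-exponential tails together with the union bound over paths. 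A parallel route I would explore is to use the line-breaking construction of $\randomtree$ from \cite{us} directly: it presents $\randomtree$ as built by attaching segments whose lengths are sub-exponential with scale tied to $\sigma_{\dseq}$, so that the $\log(\sigma'+1)$ overhead should emerge more transparently as the union-bound cost over paths in the construction.
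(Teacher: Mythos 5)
Your reduction to the reduced tree $\randomtree^*$ (suppressing degree-one vertices) and the description of the chain-length law as a uniformly random composition of $n_1$ into parts are both correct and closely parallel the paper's own Lemma~\ref{lem:degree_one_stretch}, so that piece of the argument is sound. However, the core of the proposal has a genuine gap, and it is located \emph{before} the ``delicate point'' you identify, not at it.

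The theorem is already nontrivial in the case $n_1(\dseq)=0$, when $\randomtree^*=\randomtree$, $k=n$, and $\sigma'=\sigma_{\dseq}$: it asserts $\p(\height(\randomtree)>x n^{1/2}\log(\sigma+1)/\sigma)\leq 4\exp(-x\log(\sigma+1)/2^{14})$. When $\sigma$ is large this is \emph{strictly stronger} than what Theorem~\ref{thm.gaussiantails} gives. Indeed, substituting $y=x\log(\sigma+1)/\sigma$ into Theorem~\ref{thm.gaussiantails} yields $5\exp(-x^2\log^2(\sigma+1)/(2^{13}\sigma^2))$, which for $x$ near $2^{14}$ and large $\sigma$ is close to $1$, whereas the target is of order $\sigma^{-c}$. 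So the proposal's plan to feed $\randomtree^*$ into Theorem~\ref{thm.gaussiantails} is already too weak at the base case, and your subsequent analysis of the ``competition between long paths in $\randomtree^*$ and large chain sums'' is addressing a secondary issue: with the correct bound for $\height(\randomtree^*)$, namely $\sqrt{k}\log(\sigma'+1)/\sigma'$, the naive multiplication by $(1+m)\approx n/k$ already gives $\sqrt{n}\log(\sigma'+1)/\sigma_{\dseq}$, which is the right answer. The interaction is not where the gap lies.

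What is actually missing is a mechanism to show that a random tree with no degree-one vertices and large $\sigma$ has height $O(\sqrt{n}\log\sigma/\sigma)$ rather than $O(\sqrt{n})$. The paper accomplishes this through Proposition~\ref{cor.firststep} (proved via Lemmas~\ref{lem:exp_sumbound} and~\ref{lem.firststephighvariance}), which is a genuinely new ingredient not derivable from Theorem~\ref{thm.gaussiantails}: it shows that in the sequential line-breaking construction, the tree obtained after $\Theta(\sigma\sqrt{n})$ steps already has height $O(\sqrt{n}/\sigma)$, because when degree variance is large the size-biased order of non-leaf vertices front-loads high-degree vertices, so each early step ``earns'' a lot of leaf mass. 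This is combined with the same telescoping-sum and maximal-inequality machinery as in Theorem~\ref{thm.gaussiantails}, but with the trivial $\height(\randomtree_{\rho(y_0)})\leq\lceil y_0\rceil$ bound for the first block replaced by Proposition~\ref{cor.firststep}. Your ``parallel route'' via the line-breaking construction is pointing in the right direction, but you would need to formulate and prove an analogue of Proposition~\ref{cor.firststep} to close the argument; without it, the proposal cannot match the claimed scaling in $\sigma$.
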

\noindent {\bf Aside.} A different tail bound for the heights of random trees with given degrees was recently proved by Arthur Blanc-Renaudie~\cite{blanc}, in the course of proving scaling limit results for such trees (as well as for other models of trees, including inhomogeneous continuum random trees). In the language of our paper, Blanc-Renaudie's tail bound reads as follows. Given a degree sequence $\dseq=(d_1,\ldots,d_n)$, for 
$x \ge 1$ let $\mu_\dseq(x) = \sum_{i=1}^n (d_i-1)_+(1-(n-d_i)^{\lfloor x\rfloor}/(n-1)_{\lfloor x \rfloor})$, where $(m)_k := m(m-1)\cdot \ldots \cdot (m-k+1)$, and let $t_{\dseq}=\inf\{x\ge 1: 2\mu_{\dseq}(x) \ge n_0(\dseq)-1\}$. 
	\begin{thm*}[\cite{blanc}, Theorem 8(a)]\label{abr} There exist constants $c,C>0$ such that the following holds. Fix any degree sequence $\dseq=(d_1,\ldots,d_n)$ and let $\randomtree \in_u \cT_{\dseq}$. Then for all $x > 0$, 
	\begin{align*}
	& \p\!\left(
	\frac{c\sigma_{\dseq}}{\sqrt{n}}\height(T)
	\ge
	x
	+\frac{\sqrt{n}}{\sigma_{\dseq}}\int_{\frac{\sqrt{n}}{\sigma_{\dseq}}}^{t_{\dseq}} \frac{dy}{y\mu_\dseq(y)}
	\!+\sigma_{\dseq}\sqrt{n}\frac{\ln(n\!-n_0(\dseq)\!-n_1(\dseq))}{n_0(\dseq)-1}
	\!+\!\frac{\sigma_{\dseq}}{\sqrt{n}}\frac{\ln(n_0(\dseq)-1)}{\ln(n/n_1(\dseq))}
	\!\right)\\
	& \le
	C\exp\left(-\frac{cx}{\sigma_{\dseq}\sqrt{n}}\mu_d(x\sqrt{n}/\sigma_{\dseq})\right)\, .
	\end{align*}
	\end{thm*}
Although this bound may appear somewhat unwieldy, and the limit of integration $t_{\dseq}$ is ``hard to compute'' \cite{blanc}, its bound in fact appears to be quite powerful; in particular, the other results of \cite{blanc} demonstrate that in many circumstances it predicts the correct order of magnitude for the height.

\medskip
The following proposition is the tool which allows us to transfer results from the setting of random trees with given degree sequences to that of Bienaym\'e trees (and that of simply generated trees, which are introduced in Section~\ref{sec.bienandsimpygen}, below). 
\begin{prop}\label{prop:planetrees}
Fix non-negative weights $(w_i,i \ge 0)$. 
Let $\randomtree$ be a random plane tree of size $n$ with distribution given by 
\begin{equation}\label{eq:twn_ident}
\p(\randomtree=\tree) \propto \prod_{v \in v(\tree)} w_{\deg_\tree(v)},
\end{equation}
for plane trees $\tree$ of size $n$. 
Conditionally given $\randomtree$, let $\hat{\randomtree}$ be the random tree in $\cT(n)$ obtained as follows: 
label the vertices of $\randomtree$ by a uniformly random permutation of $[n]$, then forget about the plane structure. 
For $i \in [n]$ let $D_i$ be the degree of vertex $i$ in $\hat{\randomtree}$ of vertex $i$. Then for any degree sequence $\dseq=(d_1,\ldots,d_n)$, conditionally given that $(D_1,\ldots,D_n)=(d_1,\ldots,d_n)$, we have $\hat{\randomtree} \in_u \cT_\dseq$. 
\end{prop}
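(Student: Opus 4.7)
The plan is to compute $\p(\hat{\randomtree}=\tree)$ for an arbitrary $\tree\in\cT_{\dseq}$ and verify that this probability depends on $\tree$ only through $\dseq$. Once this is shown the conclusion is immediate: since the event $\{(D_1,\dots,D_n)=\dseq\}$ coincides with $\{\hat{\randomtree}\in\cT_{\dseq}\}$, conditioning on the former produces the uniform distribution on $\cT_{\dseq}$.

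I would first view $(\randomtree,\phi)$, with $\phi$ the uniformly random labeling of $V(\randomtree)$ by $[n]$ used to build $\hat{\randomtree}$, as a single random object $\tree^{\star}$: a \emph{labeled plane tree} on vertex set $[n]$. By~\eqref{eq:twn_ident} combined with the uniformity of $\phi$, for every labeled plane tree $\tilde{\tree}$ on $[n]$,
\[
\p(\tree^{\star}=\tilde{\tree}) \;=\; \frac{1}{n!\,Z}\prod_{v\in V(\tilde{\tree})} w_{\deg_{\tilde{\tree}}(v)},
\]
where $Z$ is the normalizing constant from~\eqref{eq:twn_ident}. In particular, the right-hand side depends on $\tilde{\tree}$ only through the multiset of its vertex degrees.

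Next, I would count the labeled plane trees on $[n]$ which project onto a given $\tree\in\cT_{\dseq}$ under the map that forgets the plane structure. Each such labeled plane tree is uniquely specified by choosing, independently at each vertex $v$ of $\tree$, a linear order on the children of $v$; the number of such choices is $\prod_{v\in V(\tree)}(\deg_{\tree}(v))!=\prod_{i\ge 0}(i!)^{n_i(\dseq)}$. Since each of these labeled plane trees carries the same weight $\prod_{i\ge 0} w_i^{n_i(\dseq)}$, summation yields
\[
\p(\hat{\randomtree}=\tree)\;=\;\frac{1}{n!\,Z}\prod_{i\ge 0} w_i^{n_i(\dseq)}(i!)^{n_i(\dseq)},
\]
which depends only on $\dseq$ and not on the particular $\tree\in\cT_{\dseq}$, completing the argument.

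This is a direct counting-and-weighting calculation and there is no substantive obstacle; the only step that requires any care is making explicit the bijection between labeled plane trees on $[n]$ and the pairs consisting of a labeled tree in $\cT(n)$ together with a linear ordering of the children at each vertex, which is elementary but should be stated cleanly to justify the count.
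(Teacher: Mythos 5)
Your proof is correct and follows essentially the same route as the paper's: both introduce the intermediate object of a labeled plane tree (the paper calls it an ``ordered labeled tree''), compute its distribution as proportional to the degree-weight product divided by $n!$, and then count the $\prod_{i\ge0}(i!)^{n_i(\dseq)}$ preimages of a fixed $\tree\in\cT_\dseq$ under the forget-the-ordering map. The only cosmetic differences are notational (your $\tree^\star$ versus the paper's $\mathbf{T}$, and your explicit normalizing constant $Z$).
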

We provide the proof of Proposition~\ref{prop:planetrees} right away since it is short, but it is not necessary to read the proof in order to understand what follows.
\begin{proof}[Proof of Proposition~\ref{prop:planetrees}]
In this proof, by an {\em ordered labeled tree} we mean a 
finite tree $\mathbf{t}$ whose vertices are labeled by the integers $[n]$ (for some $n$) and such that the children of each node are endowed with a left-to-right order. From an ordered labeled tree, we may obtain a plane tree by ignoring the vertex labels, and we may obtain a labeled tree lying in $\cT(n)$ by ignoring the orderings. 

Let $\mathbf{T}$ be the random ordered labeled tree obtained from $\randomtree$ by labeling the vertices of $\randomtree$ by a uniformly random permutation of $[n]$. 

Now fix any labeled tree $\hat{\tree}$ with vertices labeled by $[n]$. For $j \ge 0$ write $n_j = \{i \in [n]: \deg_{\hat{\tree}}(i)=j\}$.
Then 
\[
\p(\hat{\randomtree}=\hat{\tree})
=\sum_{\mathbf{t}} \p(\mathbf{T}=\mathbf{t}), 
\]
where the sum is over ordered labeled trees $\mathbf{t}$ with vertices labeled by $[n]$ whose underlying (unordered) labeled tree is $\hat{\randomtree}$. This sum has 
\[
\prod_{v \in \hat{\tree}} \deg_{\hat{\tree}}(v)!
=\prod_{j \ge 0} (j!)^{n_j} 
\] 
summands, corresponding to the number of ways to choose an ordering of the children of each vertex of $\hat{\tree}$. 
Moreover, for any tree $\mathbf{t}$ included in the sum, writing $\tree$ for the (unlabeled) plane tree underlying $\mathbf{t}$, we have 
\[
\p(\mathbf{T}=\mathbf{t})=\frac{1}{n!} 
\p(\randomtree=\tree) \propto 
\prod_{v \in v(\tree)} w_{\deg_\tree(v)}
= \prod_{j \ge 0} w_j^{n_j}\, .
\]
This value is the same for all summands, so it follows that 
\[
\p(\hat{\randomtree}=\hat{\tree}) \propto 
\prod_{j \ge 0} (j!)^{n_j}\prod_{j \ge 0} w_j^{n_j}\, . 
\]
The right-hand side depends on $\hat{\tree}$ only through the number of vertices of each degree. In particular, for any degree sequence $\dseq$, it is constant over trees in $\cT_\dseq$. The result follows. 
\end{proof}
Using this proposition, we can transfer results from the setting of random trees with given degree sequences to that of random plane trees, provided that we can obtain sufficiently precise control of the degree statistics of the random plane trees. (By ``degree statistics'' we mean the number of nodes of given degrees.) Once such control is established, Theorems~\ref{thm.bieninfinitevariance},~\ref{thm.biensubcriticalnoexpdecay}, and~\ref{thm.biengaussiantails} follow straightforwardly from Theorems~\ref{thm.gaussiantails} and~\ref{thm.infinitevariance}. 

Similarly, the stochastic inequality of Theorem~\ref{thm:bienstoch} follows from a finer stochastic ordering on the heights of random trees with given degree sequences. We in fact obtain stochastic domination results for the heights of trees under two different partial orders on degree sequences.
We define both partial orders by  specifying a covering relation\footnote{
	For a partially ordered set $(\mathcal{P},\prec)$, $y \in\mathcal{P}$ covers $x \in \mathcal{P}$ if $x \prec y$ and for all $z \in \mathcal{P}$, if $x \preceq z \preceq y$ then $z \in \{x,y\}$. 
} on equivalence classes (under relabelling) of such degree sequences. Formally, we say degree sequences $\dseq=(d_1,\dots,d_n)$ and $\dseq'=(d'_1,\dots,d'_n)$ are equivalent if there exists a permutation $\eta:[n]\to [n]$ such that $(d_1,\dots,d_n)=(d'_{\eta(1)},\dots,d'_{\eta(n)})$. If this holds, then ``relabelling vertices according to $\eta$'' is a bijection from $\cT_{\dseq}$ to $\cT_{\dseq'}$. This bijection preserves tree heights, so for $\randomtree_{\dseq}\in_u\cT_{\dseq}$ and $\randomtree_{\dseq'}\in_u\cT_{\dseq'}$ it holds that $\height(\randomtree_{\dseq})\overset{d}{=}\height(\randomtree_{\dseq'})$.
\begin{thm}\label{thm.stochasticorder2}
	Let $\prec_{1}$ be the partial order on degree sequences of length $n$ defined by the following covering relation on equivalence classes: for $\cD$ and $\cD'$ equivalence classes of degree sequences of length $n$,  say $\cD$ covers $\cD'$ if there exist $\dseq=(d_1,\dots,d_n)\in \cD$ and  $\sequence{d'}=(d'_1,\dots,d'_n)\in \cD'$ such that 
	\begin{enumerate}
		\item $d_1\geq d_2$,
		\item $d'_1=d_1+1$,
		\item $d'_2=d_2-1$, and
		\item $d'_k=d_k$ for $3\le k \le n$. 
	\end{enumerate}
	 Then with $\randomtree_{\dseq} \in_u \cT_{\dseq}$ and $\randomtree_{\dseq'}\in_u \cT_{\dseq'}$, we have 
	\[
	\sequence{d'} \prec_{1} \dseq \implies \height(\randomtree_{\sequence{d'}})\preceq_{\mathrm{st}} \height(\randomtree_{\dseq}).
	\]
	Moreover, the stochastic domination of $\height(\randomtree_{\dseq})$ over $ \height(\randomtree_{\sequence{d'}})$ is strict if $\sequence{d'} \prec_{1} \dseq$ and $\dseq$ contains at least 3 non-leaf vertices.
\end{thm}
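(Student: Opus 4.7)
The plan is to construct a height-monotone bijection between auxiliary sets of marked trees with matching cardinalities, from which the stochastic domination follows by double-counting. After relabeling vertices, I would reduce to the case where $\nu$ is the identity and $\sequence{d}, \sequence{d'}$ agree except at positions $i,j$ with $d'_i = d_i+1$, $d'_j = d_j-1$, and $d_i \ge d_j \ge 1$; write $u,v$ for the vertices labeled $i,j$. The standard count $|\cT_\sequence{d}| = (n-1)!/\prod_k d_k!$ yields the identity $d_j |\cT_\sequence{d}| = (d_i+1)|\cT_\sequence{d'}|$, so that the sets
\[
A = \{(\tree,y) : \tree \in \cT_\sequence{d},\, y \text{ a child of } v\}, \qquad B = \{(\tree',z) : \tree' \in \cT_\sequence{d'},\, z \text{ a child of } u\}
\]
have the same cardinality. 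A bijection $\phi \colon B \to A$ satisfying $\height(\phi(\tree',z)_1) \ge \height(\tree')$ for every $(\tree',z) \in B$ would then imply the stochastic domination by a double-counting argument on the tail sets $\{\height > h\}$.

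To define $\phi$, I would split on whether $v$ is a descendant of $z$ in $\tree'$. In the generic case (it is not), I would regraft $z$'s subtree from $u$ to $v$, obtaining $\tree \in \cT_\sequence{d}$ with $z$ as a new child of $v$, and set $\phi(\tree',z) = (\tree,z)$. In the path-reversal case (it is), and assuming $u$ is the root of $\tree'$, I would reverse every edge on the path $u = x_0,\, z = x_1,\, \ldots,\, x_L = v$, obtaining $\tree \in \cT_\sequence{d}$ rooted at $v$ in which $x_{L-1}$ is a new child of $v$, and set $\phi(\tree',z) = (\tree, x_{L-1})$. When $u$ is not the root, this reversal needs to be augmented along the ancestors of $u$ to avoid creating multi-parent vertices; the net effect remains the transfer of one child-slot from $u$ to $v$. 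Invertibility of $\phi$ is ensured by the characterization that $(\tree,y) \in A$ arises from a path-reversal exactly when $u$ is a descendant of $y$ in $\tree$.

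The hard part will be establishing $\height(\tree) \ge \height(\tree')$. The generic regraft shifts depths in $z$'s subtree by $\mathrm{depth}_{\tree'}(v) - \mathrm{depth}_{\tree'}(u)$, which is nonnegative precisely when $v$ lies weakly below $u$ in $\tree'$; the delicate sub-case is when $v$ is strictly shallower than $u$, where the naive regraft can flatten the tree. Handling this sub-case will likely require refining $\phi$ so that the regraft is accompanied by a compensating structural move along the $u$-to-$v$ path, so that the deepest leaf of $z$'s subtree (or a suitable replacement) remains at a depth at least as large as its depth in $\tree'$. I expect the cleanest way to effect this refinement is via the line-breaking construction of \cite{us}: coupling the random choices underlying $\randomtree_\sequence{d}$ and $\randomtree_\sequence{d'}$ so that the height inequality holds sample-path-wise.

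For the strict domination under the three-non-leaves hypothesis, I would use a third non-leaf $w \notin \{u,v\}$ (with $d_w \ge 1$) as an anchor to construct an explicit tree $\tree^* \in \cT_\sequence{d}$ whose height strictly exceeds the heights of every $\tree' \in \cT_\sequence{d'}$ with $(\tree',z) \in \phi^{-1}(\tree^*,\cdot)$; arranging $u$, $v$, and $w$ along a long spine in $\tree^*$ yields such a witness, since a path-reversal or regraft applied to the preimage can only produce trees of strictly smaller height at the corresponding tail level. This forces the inequality in the stochastic domination to be strict.
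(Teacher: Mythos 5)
Your approach is genuinely different from the paper's, and your combinatorial setup is correct: the identity $d_j|\cT_{\sequence{d}}|=(d_i+1)|\cT_{\sequence{d}'}|$ (from $|\cT_{\dseq}|=(n-1)!/\prod_k d_k!$) does give $|A|=|B|$, and if a bijection $\phi\colon B\to A$ with $\height(\phi(\tree',z)_1)\geq\height(\tree')$ existed, stochastic domination would follow by a clean double-count. But you have identified the gap in your own writeup and not filled it, and the gap is genuine, not merely technical. When $v$ is at strictly smaller depth than $u$ and neither is an ancestor of the other, the regraft moves every vertex of the transplanted subtree \emph{closer} to the root by $\mathrm{depth}_{\tree'}(u)-\mathrm{depth}_{\tree'}(v)>0$; if the height of $\tree'$ is realized only inside that subtree, then $\height(\tree)<\height(\tree')$. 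No local, subtree-at-a-time transplant can be monotone in height, because whether moving a subtree helps or hurts depends on the relative depths of $u$ and $v$, and those relative depths are themselves part of what must be controlled. The remark that the resolution ``will likely require refining $\phi$'' and the pivot to ``I expect the cleanest way is the line-breaking construction'' are placeholders, not arguments. The path-reversal case (where one of $u,v$ is an ancestor of the other) is similarly underspecified once $u$ is not the root.

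The paper handles exactly this obstruction by \emph{not} distinguishing which of $u,v$ is the deeper one. It introduces an equivalence relation $\sim$ on $\cT_{\dseq}\cup\cT_{\dseq'}$: two trees are equivalent if, after deleting edges from vertices $1$ and $2$ to their children, they produce the same forest of fragments and have the same root, possibly after swapping the labels $1$ and $2$. The swap is essential: it lets one assume without loss of generality that the deeper of the two special vertices receives the random fragment set. Proposition~\ref{prop:same_equiv_prob} shows $\p(\randomtree\in\cC)=\p(\randomtree'\in\cC)$ for every class $\cC$ (a short computation with binomial coefficients), and Proposition~\ref{prop:refined_stochastic_bd} then proves the stochastic domination \emph{conditionally given} $\cC$, using the ``eggs-in-one-basket'' lemma (Lemma~\ref{lem:ei}): for fixed reals $0<a_1\leq\cdots\leq a_m$, if $\rA\in_u\binom{[m]}{k}\cup\binom{[m]}{m-k}$, then $\max(a_i,i\in\rA)$ is stochastically larger when $k$ is closer to $m/2$. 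That lemma captures precisely the intuition that the more balanced split $(d_i,d_j)$ gives a stochastically taller tree than the more skewed split $(d_i+1,d_j-1)$. Your double-counting skeleton could in principle be salvaged by building the swap into the bijection — but that is essentially reinventing the paper's conditioning on $\cC$, and the height comparison within a class is then no longer a bijection but a probabilistic statement, which is why the paper proves a lemma about random subsets rather than a map.

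Your sketch of strict domination has a similar shape problem: you need strictness for the \emph{unconditional} law, not just for some specific preimage fiber, and the anchor-vertex argument is not developed to the point where one can see this. The paper instead exhibits a single explicit equivalence class $\cC$ (one in which $1,2$, and a third non-leaf vertex $3$ sit on a short spine with all other children of $1,2$ being leaves) for which the conditional distributions of the height in $\cT_{\sequence{d}}\cap\cC$ and $\cT_{\sequence{d}'}\cap\cC$ differ, and combined with the non-strict domination in every other class this gives strictness overall.
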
 
In words, to obtain $\sequence{d'}$ from $\dseq$ in the definition of $\prec_{1}$, for $a\leq b$, we replace one vertex with $a$ children and one vertex with $b$ children by a vertex with $a-1$ children and one with $b+1$ children; informally, the degrees in $\sequence{d'} $ are more skewed than the degrees in $\dseq$. Then, $\sequence{d'} \prec_{1} \dseq$ and $\height(\randomtree_{\sequence{d'}})\preceq_{\mathrm{st}} \height(\randomtree_{\dseq})$. So, informally, Theorem \ref{thm.stochasticorder2} states that more skewed degrees yield shorter trees. Theorem \ref{thm.stochasticorder2} has the following corollary, which resolves a conjecture from \cite{us}. 
\begin{cor}\label{thm.stochasticorder1}
	Let $\prec_{2}$ be the partial order on degree sequences of length $n$ defined by the following covering
	relation: for $\cD$ and $\cD'$ equivalence classes of degree sequences of length $n$ ,  say $\cD$ covers $\cD'$ if there exists $\dseq=(d_1,\dots,d_n)\in \cD$ and $\sequence{d'}=(d'_1,\dots,d'_n)\in \cD'$ such that 
	\begin{enumerate}
		\item $d_1,d_2\geq 1$,
		\item $d'_{1}=d_i+d_j$,
		\item $d'_{2}=0$, and
		\item $d'_k=d_k$ for $3 \le k\le n$. 
	\end{enumerate}
	Then with $\randomtree_{\dseq} \in_u \cT_{\dseq}$ and $\randomtree_{\dseq'}\in_u \cT_{\dseq'}$, we have  
$	\sequence{d'} \prec_{2} \dseq \implies \height(\randomtree_{\sequence{d'}})\preceq_{\mathrm{st}} \height(\randomtree_{\dseq}).$
\end{cor}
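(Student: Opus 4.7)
The plan is to show that the partial order $\prec_2$ refines $\prec_1$ (up to relabeling of coordinates), so that the corollary becomes an immediate consequence of Theorem~\ref{thm.stochasticorder2} and the transitivity of stochastic domination. Since $\prec_2$ is the partial order generated by the covering relation in its definition, it suffices to handle the case where $\dseq$ covers $\dseq'$ in $\prec_2$; chaining the resulting stochastic inequalities then yields the general case.

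Fix such a covering, with distinct $i,j\in[n]$ and a permutation $\nu$ such that $d_i,d_j\geq 1$, $d'_{\nu(i)}=d_i+d_j$, $d'_{\nu(j)}=0$, and $d'_{\nu(k)}=d_k$ for all other $k$. The height of a uniformly random labeled tree with a prescribed degree sequence does not depend on how the vertex labels are assigned, so $\height(\randomtree_{\dseq'})$ has the same law as $\height(\randomtree_{\dseq''})$, where $\dseq''$ is obtained from $\dseq$ by setting $\dseq''_i=d_i+d_j$ and $\dseq''_j=0$ and leaving the other entries fixed; we may therefore assume $\nu$ is the identity, and by relabelling $i,j$ if necessary also that $d_i\geq d_j$. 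Now introduce intermediate sequences $\dseq=\dseq^{(0)},\dseq^{(1)},\ldots,\dseq^{(d_j)}=\dseq'$ with $\dseq^{(k)}_i=d_i+k$, $\dseq^{(k)}_j=d_j-k$, and $\dseq^{(k)}_\ell=d_\ell$ for $\ell\notin\{i,j\}$. For each $1\le k\le d_j$ the sequence $\dseq^{(k-1)}$ covers $\dseq^{(k)}$ in $\prec_1$ via the indices $(i,j)$ and the identity permutation: the defining inequality is
\[
\dseq^{(k-1)}_i=d_i+k-1\geq d_i\geq d_j\geq d_j-k+1=\dseq^{(k-1)}_j,
\]
valid for $k\geq 1$, and the remaining conditions hold by construction. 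Iterating Theorem~\ref{thm.stochasticorder2} along this chain gives $\height(\randomtree_{\dseq'})\preceq_{\mathrm{st}}\height(\randomtree_{\dseq})$, as required.

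I do not anticipate a serious obstacle: the content of the argument is the observation that a single $\prec_2$-step (collapsing all children of $j$ onto $i$) decomposes into $d_j$ successive $\prec_1$-steps (moving one child at a time), and that the resulting intermediate sequences all satisfy the size condition $\dseq^{(k-1)}_i\geq\dseq^{(k-1)}_j$ required by the $\prec_1$-covering definition. The only point requiring a little care is the permutation $\nu$ appearing in the $\prec_2$-cover, which is absorbed using the invariance of the height distribution under relabeling of vertices.
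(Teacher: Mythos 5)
Your proof is correct and follows exactly the same route as the paper, which simply asserts that $\sequence{d'} \prec_2 \dseq$ implies $\sequence{d'} \prec_1 \dseq$ and then invokes Theorem~\ref{thm.stochasticorder2}. You have merely filled in the details of that one-line observation (the chain of intermediate sequences $\dseq^{(k)}$ and the check that each step is a valid $\prec_1$-cover), and this is done correctly, including the justifications for assuming $\nu$ is the identity and that $d_i \geq d_j$.
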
 

Corollary~\ref{thm.stochasticorder1} follows immediately from Theorem~\ref{thm.stochasticorder2} by observing that if $\sequence{d'} \prec_{2} \dseq$ then $\sequence{d'} \prec_{1} \dseq$. 
Arthur Blanc-Renaudie told us a proof of the stochastic domination presented in Corollary~\ref{thm.stochasticorder1} (which he does not intend to write up), using a bijection presented in \cite{blanc}. Our proof of Theorem~\ref{thm.stochasticorder2} proceeds differently, but was inspired by Blanc-Renaudie's approach.

Before concluding the introduction, we state and prove two further corollaries of Theorem~\ref{thm.stochasticorder2}, and use the second of them to prove Theorem~\ref{thm:bienstoch}. The first states that height of any random tree with a fixed degree sequence is stochastically dominated by that of a random sub-binary tree with the same number of leaves and degree-one vertices. (We say $\dseq$ is sub-binary if $n_i(\dseq)=0$ for all $i \ge 3$, and in this case also say that $\randomtree_{\dseq}$ is sub-binary.) 
The second essentially says that among all random trees with a given size and given number of vertices of degree $1$, the sub-binary trees have the stochastically largest heights (with a very minor {\em caveat} that essentially addresses a potential parity issue).  
\begin{cor}\label{cor:sub_binary}
Let $\mathrm{b}$ be a sub-binary degree sequence, 
let $\dseq$ be a degree sequence with $n_0(\dseq)=n_0(\sequence{b})$ and  
$n_1(\dseq)=n_1(\sequence{b})$, and let $\randomtree_{\dseq} \in_u\cT_{\dseq}$ and $\randomtree_{\sequence{b}}\in_u\cT_{\sequence{b}}$. Then $\height(\randomtree_{\dseq})\preceq_{\mathrm{st}} \height(\randomtree_{\mathrm{b}})$. 
\end{cor}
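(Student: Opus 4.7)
Since $\dseq$ and $\sequence{b}$ share $n_0$ and $n_1$, the constraint $\sum_i d_i = |\dseq|-1$ forces $|\dseq| \le 2n_0(\sequence{b}) + n_1(\sequence{b}) - 1 = |\sequence{b}|$, with equality if and only if $\dseq$ is itself sub-binary, in which case $\dseq = \sequence{b}$ as multisets and there is nothing to show. So we may assume $|\dseq| < |\sequence{b}|$. The plan is to build a chain $\dseq = \dseq^{(0)}, \dseq^{(1)}, \ldots, \dseq^{(k)} = \sequence{b}$, where each $\dseq^{(\ell+1)}$ is obtained from $\dseq^{(\ell)}$ by a \emph{split}: replace a vertex of degree $c \ge 3$ by a vertex of degree $a$ and a new vertex of degree $b$, with $a, b \ge 2$ and $a + b = c + 1$. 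Each split preserves $(n_0, n_1)$, so the chain terminates at a sub-binary sequence with those parameters, which by the length identity above must equal $\sequence{b}$. The corollary thereby reduces to proving the \emph{splitting lemma}: $\height(\randomtree_{\dseq^{(\ell)}}) \preceq_{\mathrm{st}} \height(\randomtree_{\dseq^{(\ell+1)}})$ at each step.

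\textbf{Splitting via a padded intermediary.} To prove the splitting lemma comparing $\dseq$ (with a marked degree-$c$ vertex $i$) and $\dseq^*$ (where $i$ now has degree $a$ and a new vertex has degree $b$), introduce the padded sequence $\tilde{\dseq}$ of length $|\dseq|+1$, formed from $\dseq$ by appending a new leaf entry and raising $d_i$ from $c$ to $c+1$. Then $\tilde{\dseq}$ and $\dseq^*$ agree except at two coordinates: $\tilde{\dseq}$ has the pair $(c+1, 0)$ where $\dseq^*$ has $(a, b)$. Since $a, b \ge 1$, merging those two coordinates under the $\prec_2$ covering relation produces precisely $\tilde{\dseq}$, so $\tilde{\dseq} \prec_2 \dseq^*$, and Corollary~\ref{thm.stochasticorder1} yields $\height(\randomtree_{\tilde{\dseq}}) \preceq_{\mathrm{st}} \height(\randomtree_{\dseq^*})$. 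It remains to establish $\height(\randomtree_{\dseq}) \preceq_{\mathrm{st}} \height(\randomtree_{\tilde{\dseq}})$.

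\textbf{Main obstacle.} The last inequality is where the work lies. The natural idea is to decompose a uniform $\tilde{\randomtree} \in_u \cT_{\tilde{\dseq}}$ according to the parent of the new leaf: conditionally on the parent being the degree-$(c+1)$ vertex $i$, deletion of the leaf is a height-nonincreasing bijection onto $\cT_{\dseq}$, delivering the desired inequality on that event; conditionally on the parent being some other vertex $v$, deletion lands in $\cT_{\dseq^{(v)}}$, where $\dseq^{(v)}$ is obtained from $\dseq$ by the $\prec_1$ move that raises $d_i$ by one and lowers $d_v$ by one. The difficulty is that whenever $d_v \le c$, the sequence $\dseq^{(v)}$ lies \emph{below} $\dseq$ in $\prec_1$, so Theorem~\ref{thm.stochasticorder2} gives only $\height(\randomtree_{\dseq^{(v)}}) \preceq_{\mathrm{st}} \height(\randomtree_{\dseq})$, which is the wrong direction for a naive combination of the conditional bounds. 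Closing this gap is the technical heart of the proof; one route is to iterate the splitting/padding argument inside each bad component until the accumulated degree sequences become comparable to $\dseq$ in $\prec_1$, and a cleaner route bypasses $\tilde{\dseq}$ altogether by constructing a direct height-nondecreasing coupling between $\cT_{\dseq}$ and $\cT_{\dseq^*}$ via the line-breaking bijection of \cite{us}, which is the combinatorial engine behind Theorem~\ref{thm.stochasticorder2}.
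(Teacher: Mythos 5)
Your overall reduction --- lower $|\dseq|$ to $|\sequence{b}|$ by a chain of single ``split'' moves, each adding one vertex while preserving $(n_0, n_1)$, and prove a split lemma at each step --- is exactly the right skeleton and agrees with the paper's. The step from $\tilde{\dseq}$ to $\dseq^*$ via $\prec_2$ and Corollary~\ref{thm.stochasticorder1} is also fine. The problem is the other leg: the inequality $\height(\randomtree_{\dseq}) \preceq_{\mathrm{st}} \height(\randomtree_{\tilde{\dseq}})$, where $\tilde{\dseq}$ is obtained by appending a degree-$0$ entry and raising $d_i$ from $c$ to $c+1$, is \emph{false} in general, so no amount of effort can close the gap you flag.

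A concrete counterexample: take $\dseq = (3,2,2,0,0,0,0,0)$ (so $c=3$ at $i=1$), giving $\tilde{\dseq} = (4,2,2,0,0,0,0,0,0)$. Both heights are supported on $\{2,3\}$. Direct enumeration gives $|\cT_{\dseq}| = 7!/(3!\,2!\,2!) = 210$, of which $50$ trees have height $2$, so $\p(\height(\randomtree_{\dseq})=2) = 5/21$; and $|\cT_{\tilde{\dseq}}| = 8!/(4!\,2!\,2!) = 420$, of which $120$ have height $2$, so $\p(\height(\randomtree_{\tilde{\dseq}})=2) = 2/7$. Since $2/7 > 5/21$, in fact $\height(\randomtree_{\tilde{\dseq}}) \prec_{\mathrm{st}} \height(\randomtree_{\dseq})$: your intermediary is stochastically \emph{shorter} than $\dseq$, not taller. (The split lemma itself still holds here --- $\dseq^* = (2,2,2,2,0,0,0,0,0)$ has heights in $\{3,4\}$ --- so the target is true; it's the route via $\tilde{\dseq}$ that is broken.)

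The fix the paper uses is to pad with a degree-\emph{$1$} entry rather than a degree-$0$ one. Set $\dseq^+ = (d_1,\ldots,d_n,1)$. There is a trivial height-monotone coupling: sample $\randomtree_{\dseq} \in_u \cT_{\dseq}$ and form $\randomtree_{\dseq^+}$ by inserting $n+1$ as the parent of a uniformly random vertex (equivalently, subdividing a uniformly random edge, or prepending a new root), which manifestly cannot decrease the height. Then pick $k$ with $d_k \ge 3$ and let $\dseq'$ replace the pair $(d_k, 1)$ in $\dseq^+$ by $(d_k-1, 2)$; since $d_k - 1 \ge 2$, $\dseq'$ covers $\dseq^+$ in $\prec_1$, so Theorem~\ref{thm.stochasticorder2} gives $\height(\randomtree_{\dseq^+}) \preceq_{\mathrm{st}} \height(\randomtree_{\dseq'})$, and $\dseq'$ has length $n+1$ with the same $(n_0,n_1)$ as $\dseq$. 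One further remark: your suggested ``cleaner route'' via the line-breaking bijection of \cite{us} misattributes the engine behind Theorem~\ref{thm.stochasticorder2} --- that theorem is proved in Section~\ref{sec:stochastic} by the equivalence-class decomposition and the eggs-in-one-basket lemma, not by the bijection of Section~\ref{sec:bij}.
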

\begin{proof} 
Note that necessarily $n_2(\sequence{b})=n_0(\sequence{b})-1$, so $\sequence{b}$ has length $2n_0(\sequence{b})+n_1(\sequence{b})-1$
Write $\dseq=(d_1,\ldots,d_n)$. If $n = 2n_0(\sequence{b})+n_1(\sequence{b})-1$ then $\dseq$ is a permutation of $\sequence{b}$ and there is nothing to prove. So suppose that $n< 2n_0(\sequence{b})+n_1(\sequence{b})-1$; in this case 
necessarily $\dseq$ contains at least one entry which is three or greater. We construct a degree sequence $\dseq'=(d_1',\ldots,d_{n+1}')$ with 
$n_0(\dseq)=n_0(\dseq')$ and $n_1(\dseq)=n_1(\dseq')$ and $\height(T_{\dseq}) \preceq_{\mathrm{st}} \height(T_{\dseq'})$. This proves the corollary (by induction on $2n_0(\sequence{b})+n_1(\sequence{b})-1-n$), as we can then transform $\dseq$ into a degree sequence of length sub-binary degree sequence $2n_0(\sequence{b})+n_1(\sequence{b})-1$ without changing the number of zeros or of ones, and while stochastically increasing the height of the associated random tree.

To construct $\dseq'$, first let $\dseq^+=(d_1^+,\ldots,d_{n+1}^+)=(d_1,\ldots,d_n,1)$. 
The trees $\randomtree_{\dseq}$ and $\randomtree_{\dseq^+}\in_u\cT_{\dseq^+}$ may be coupled as follows. 
If $n+1$ is not the root of $\randomtree_{\dseq^+}$ then form $\randomtree_{\dseq}$ from $\randomtree_{\dseq^+}$ by replacing the two-edge path containing $n+1$ in $\randomtree_{\dseq^+}$ by a single edge connecting its endpoints. If $n+1$ {\em is} the root, then instead form $\randomtree_{\sequence{b}}$ by deleting $n+1$ and rerooting at its unique child. (The reverse operation is to add $n+1$ as the parent of a uniformly random vertex of $\randomtree_{\dseq}$.) 
Under this coupling, the height of $\randomtree_{\dseq^+}$ is at least that of $\randomtree_{\dseq}$, so it follows that 
\[
\height(\randomtree_{\dseq}) \preceq_{\mathrm{st}} \height(\randomtree_{\dseq^+})\, .
\]
Now choose $k \in [n]$ such that $d_k^+=d_k \ge 3$ and define $\dseq' = (d_1',\ldots,d_{n+1}')$ by 
\[
d_i' = \begin{cases}
		d_i^+ & \mbox{ if }i \not\in\{k,n+1\}\\
		d_k^+-1& \mbox{ if } i=k\\
		2	& \mbox{ if }i=n+1\, .
		\end{cases}
\]
Then $n_0(\dseq)=n_0(\dseq')$ and $n_1(\dseq)=n_1(\dseq')$. Moreover, $\height(\randomtree_{\dseq^+}) \preceq_{\mathrm{st}} \height(\randomtree_{\dseq'})$ by Theorem~\ref{thm.stochasticorder2}, and so $\height(\randomtree_{\dseq}) \preceq_{\mathrm{st}} \height(\randomtree_{\dseq'}$) as required. \end{proof}

\begin{cor}\label{cor:unary_stochast}
Let $\dseq=(d_1,\ldots,d_n)$ be any degree sequence and let $\randomtree_{\dseq}\in_u \cT_{\dseq}$. Let 
\[
n^+ = 	\begin{cases}
			n & \mbox{ if }n_1(\dseq) \ge 1 \mbox{ or if $n$ is odd} \\
			n+1	& \mbox{ if }n_1(\dseq)=0 \mbox{ and $n$ is even.}
		\end{cases}
\]
Then there is a sub-binary degree sequence
$\sequence{b}=(b_1,\ldots,b_{n^+})$ with $n_1(\sequence{b})\le n_1(\sequence{d})$, such that with $\randomtree_{\sequence{b}}\in_u \cT_{\sequence{b}}$, then $\height(\randomtree_{\dseq}) \preceq_{\mathrm{st}} \height(\randomtree_{\sequence{b}})$.
\end{cor}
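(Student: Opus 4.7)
The plan is to build $\sequence{b}$ from $\dseq$ by, if necessary, first appending a leaf to bring the length to $n^+$, and then iteratively reducing the maximum degree through upward $\prec_1$-covers, invoking Theorem~\ref{thm.stochasticorder2} at each step. Write $\ell:=n_0(\dseq)$, $u:=n_1(\dseq)$, and $E:=\sum_{i:\,d_i\ge 2}(d_i-2)$ for the \emph{excess}. A quick parity check (a sub-binary sequence of even length always contains at least one degree-$1$ vertex, since $n_0-n_2=1$ forces $n_1\equiv n+1\pmod 2$) shows that if $E=0$ then $n^+=n$ and we may take $\sequence{b}:=\dseq$, so from now on assume $E\ge 1$.

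If $n^+=n+1$ (equivalently $u=0$ and $n$ is even), let $\dseq^*:=(d_1,\ldots,d_n,1)$; the leaf-deletion coupling used in the proof of Corollary~\ref{cor:sub_binary} then gives $\height(\randomtree_{\dseq})\preceq_{\mathrm{st}}\height(\randomtree_{\dseq^*})$. Otherwise set $\dseq^*:=\dseq$. In either case $\dseq^*$ has length $n^+$, excess $E$, and $n_0(\dseq^*)=\ell$, and we write $u^+:=n_1(\dseq^*)\in\{u,u+1\}$. The next step is to reduce $\dseq^*$ to a sub-binary sequence using two elementary $\prec_1$-covers: a \emph{Type~A} move pairs a vertex of degree $c\ge 3$ with a degree-$1$ vertex and replaces their degrees by $c-1$ and $2$, while a \emph{Type~B} move pairs a vertex of degree $c\ge 3$ with a leaf and replaces their degrees by $c-1$ and $1$. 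In each case the pre-move sequence is $\prec_1$-dominated by the post-move sequence, so Theorem~\ref{thm.stochasticorder2} implies that each move stochastically increases the tree-height; moreover each move reduces the excess by exactly one.

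Performing $b$ Type~B moves followed by $a$ Type~A moves, with $a+b=E$, yields a sub-binary $\sequence{b}$ of length $n^+$ with $n_0(\sequence{b})=\ell-b$ and $n_1(\sequence{b})=u^+-a+b$, provided this schedule is feasible: one needs $b\le\ell$ (enough leaves for the Type~B moves) and $a\le u^++b$ (enough degree-$1$ vertices once the Type~B moves are done). The target condition $n_1(\sequence{b})\le u$ becomes $a-b\ge u^+-u$, and non-negativity $n_1(\sequence{b})\ge 0$ becomes $a-b\le u^+$. Together with $a+b=E$, the whole construction therefore reduces to finding an integer $a-b\in[u^+-u,\,u^+]$ of the same parity as $E$.

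The only nontrivial point is this parity check, and it is exactly where the two cases in the definition of $n^+$ become essential. From $E=2\ell+u-n-1$ one obtains $E\equiv u+n+1\pmod 2$, and three cases arise. If $n^+=n$ with $u\ge 1$, the interval $[0,u]$ contains at least two consecutive integers and hence an element of either parity. If $n^+=n$ with $u=0$, then $n$ is odd, so $E$ is even and $a-b=0$ works. If $n^+=n+1$, then $u=0$ and $n$ is even, so $E$ is odd and the forced choice $a-b=1$ has the right parity. The remaining feasibility condition $b\le\ell$ reduces in each case to $u\le n+1$, which always holds. Transitivity of $\preceq_{\mathrm{st}}$ then closes the chain $\height(\randomtree_{\dseq})\preceq_{\mathrm{st}}\height(\randomtree_{\dseq^*})\preceq_{\mathrm{st}}\height(\randomtree_{\sequence{b}})$. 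The main obstacle throughout is precisely this parity bookkeeping, and it is the point of the appended leaf in the $n^+=n+1$ case: it shifts the parity of the ``budget'' of degree-$1$ vertices by one, which is exactly what is needed so that the reduction to a sub-binary sequence can be carried out purely through $\prec_1$-covers.
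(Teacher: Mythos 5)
Your proof is correct and uses essentially the same machinery as the paper --- a chain of $\prec_1$-covers, justified by Theorem~\ref{thm.stochasticorder2}, together with the leaf-appending coupling borrowed from the proof of Corollary~\ref{cor:sub_binary} to handle the even-length, no-degree-one case. What you do differently is the \emph{organization}: the paper runs a greedy, case-by-case reduction (first eliminate all degrees $\ge 4$, then pairs of degree-$3$ vertices, then a possible lone degree-$3$ vertex), whereas you define the excess $E=\sum_{d_i\ge 2}(d_i-2)$, observe that each atomic $\prec_1$-cover of the Type~A or Type~B form decreases $E$ by exactly one, and precompute how many of each type to perform so that the final sub-binary sequence meets the $n_1$ constraint. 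Your version makes the parity obstruction --- and the role of the appended leaf in fixing it --- cleaner and more transparent than the paper's case analysis.

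One small omission worth tightening: after "the whole construction therefore reduces to finding an integer $a-b\in[u^+-u,\,u^+]$ of the same parity as $E$," you have dropped the non-negativity constraint $b\ge 0$, i.e.\ $a-b\le E$. In the case $n^+=n$, $u\ge 1$, the interval $[0,u]$ may contain large values of the right parity with $a-b>E$, which would force $b<0$; you should specify that you take the \emph{smallest} admissible $\delta=a-b$ in that interval (so $\delta\in\{0,1\}$), which is automatically $\le E$ since you have already reduced to $E\ge 1$. With that one sentence added, the argument is airtight; the other two cases force $\delta\in\{0,1\}$ anyway so need no further comment.
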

\begin{proof}
Suppose that $\dseq$ has at least one entry $d_k \ge 4$. 
Choose $j \in [n]$ with $d_j=0$. Then the degree sequence 
$\dseq'=(d_1',\ldots,d_n')$ given by 
\[
d_i' = \begin{cases}
		d_i-2	&\mbox{ if }i=k\\
		2		&\mbox{ if }i=j\\
		d_i		&\mbox{ otherwise}\, 
		\end{cases}
\]
has the same length as $\dseq$ and satisfies  $n_1(\dseq') = n_1(\dseq)$. Moreover, by Theorem~\ref{thm.stochasticorder2}, $\height(\randomtree_{\dseq})  \preceq_{\mathrm{st}}\height(\randomtree_{\dseq'})$. 
We have reduced the total degree of vertices with degree at least $4$ while stochastically increasing the height, and without changing $n$ or the number of vertices of degree $1$. It follows that to prove the corollary we may restrict our attention to degree sequences corresponding to trees with maximum degree three.

Next suppose that $\dseq$ has $n_3(\dseq) \ge 2$ and choose distinct $k,\ell \in [n]$ with $d_k=d_\ell=3$ and $j \in [n]$ with $d_j=0$. Then the sequence $\dseq'=(d_1,\ldots,d_n')$ given by 
\[
d_i' = \begin{cases}
		d_i-1	&\mbox{ if }i\in\{k,\ell\}\\
		2		&\mbox{ if }i=j\\
		d_i		&\mbox{ otherwise}\, 
		\end{cases}
\]
has the same length as $\dseq$ and satisfies $n_1(\dseq') = n_1(\dseq)$ and has two fewer  vertices of degree three, and two applications of Theorem~\ref{thm.stochasticorder2} give that $\height(\randomtree_{\dseq})  \preceq_{\mathrm{st}}\height(\randomtree_{\dseq'})$. This shows that we may restrict our attention to degree sequences corresponding to trees of maximum degree three and with at most one vertex of degree three.

Among such degree sequences, if $n_3(\dseq)=0$ there is nothing to prove -- the tree $\randomtree_{\dseq}$ is already sub-binary. If $n_3(\dseq)=1$ and $n_1(\dseq) \ge 1$ then let $k$ be such that $d_k=3$ and choose $j\in [n]$ with $d_j=1$. 
Then the sequence $\dseq'=(d_1',\ldots,d_n')$ given by 
\[
d_i' = \begin{cases}
		2	&\mbox{ if }i\in\{j,k\}\\
		d_i		&\mbox{ otherwise}\, 
		\end{cases}
\]
is sub-binary and has $n_1(\dseq') < n_1(\dseq)$, and $\height(\randomtree_{\dseq})  \preceq_{\mathrm{st}}\height(\randomtree_{\dseq'})$. 

Finally, if $n_3(\dseq)=1$ and $n_1(\dseq)=0$ then necessarily $n$ is even. 
In this case, choose $k \in [n]$ such that $d_k=3$, and define $\dseq' = (d_1',\ldots,d_{n+1}')$ by 
\[
d_i' = \begin{cases}
		d_i & \mbox{ if }i \not\in\{k,n+1\}\\
		d_k-1& \mbox{ if } i=k\\
		2	& \mbox{ if }i=n+1\, .
		\end{cases}
\]
Then $\dseq'$ is sub-binary. 
Moreover, the same argument as that in the final paragraph of the proof of Corollary~\ref{cor:sub_binary} shows that 
with $\randomtree_{\dseq'}\in_u \cT_{\dseq'}$, then $\height(\randomtree_{\dseq})  \preceq_{\mathrm{st}}\height(\randomtree_{\dseq'})$. 
This completes the proof. 
\end{proof}
\begin{proof}[Proof of Theorem~\ref{thm:bienstoch}]
For any $n\in \N$ 
with $\p(|\randomtree_{\mu}|=n)> 0$
and any plane tree $\tree$ of size $n$, 
\[
\p(\randomtree_{\mu,n}=\tree)=
\frac{1}{\p(|\randomtree_{\mu}|=n)} \prod_{v \in \tree} \mu_{\deg_\tree(v)}\, 
\]
and likewise if $\p(|\randomtree_{\nu}|=n)> 0$ then $\p(\randomtree_{\nu,n}=\tree)=
(\p(|\randomtree_{\nu}|=n))^{-1}\prod_{v \in \tree} \nu_{\deg_\tree(v)}$. 
Thus, the laws of $\randomtree_{\mu,n}$ and $\randomtree_{\nu,n}$ both have the product structure required by Proposition~\ref{prop:planetrees}. 

Now fix $n$ for which $\p(|\randomtree_{\mu}|=n)> 0$ and let $n^+ = n+\I{n~\mathrm{even}}$, so that $\p(|\randomtree_{\nu}|=n^+)>0$. Label the vertices of $\randomtree_{\mu,n}$ (resp.\ $\randomtree_{\nu,n^+}$) by a uniformly random permutation of $[n]$ (resp.  $[n^+]$) and write $\sequence{D}$ (resp.\ $\sequence{B}$) for the resulting degree sequence. Then $\sequence{D}$ is a degree sequence of length $n$ with $n_1(\sequence{D})=0$ and $\sequence{B}$ is a binary degree sequence of length $n^+$. Corollary~\ref{cor:unary_stochast} yields that with $\randomtree_{\sequence{B}} \in_u \cT_{\sequence{B}}$ and $\randomtree_{\sequence{D}} \in_u \cT_{\sequence{D}}$, then 
\[
\height(\randomtree_{\sequence{D}})\preceq_{\mathrm{st}} \height(\randomtree_{\sequence{B}})\, .
\]
But Proposition~\ref{prop:planetrees} implies that $\height(\randomtree_{\mu,n}) \eqdist \height(\randomtree_{\sequence{D}})$ and $\height(\randomtree_{\nu,n^+}) \eqdist \height(\randomtree_{\sequence{B}})$; the result follows. 
\end{proof}

\subsection{Related work}\label{sec:relatedwork}

The study of the heights of trees spans decades, beginning with work of Harary and Prins \cite{MR101846}, Riordan \cite{MR140434} and R\'enyi and Szekeres \cite{MR0219440}. 
The work \cite{MR101846} developed generating functions for the number of labeled rooted trees with given height (which they called {\em root diameter}); their work was extended to more general models of trees, including partially labeled trees, in \cite{MR140434}. R\'enyi and Szekeres \cite{MR0219440} analyzed the generating functions developed in \cite{MR101846,MR140434} to prove that for a uniformly random rooted labeled tree $\randomtree_n \in_u \cT(n)$, the height satisfies $n^{-1/2}\height(\randomtree_n) \convdist H$ for a random variable $H$ with $\e H=(2\pi )^{1/2}$. Most other early work also focussed on heights and diameters of specific random tree models, such as random labeled trees \cite{MR731595} and random plane trees \cite{MR0505710}, and on distances between typical pairs of points in such models \cite{MR506256,MR263685}.

A number of works then investigated the height (and width) of somewhat more general families of trees, such as the so-called ``simple'' trees 
\cite{MR680517,MR1798284}.
(Simple trees may be thought of as random coloured plane trees, where nodes with $c$ children may receive any colour from $\{1,\ldots,\kappa(c)\}$; the values $\{\kappa(c),c \ge 0\}$ must all be bounded by some fixed constant $M$.) In all these models, for a random tree $\randomtree_n$ of size $n$, the height $\height(\randomtree_n)$ is of order $\sqrt{n}$ in probability, for $n$ large. Of particular note in the context of the current work are the work of Flajolet {\em et al} \cite{MR1249127}, which stated the first explicit non-asymptotic tail bounds on the heights of uniform random binary trees; and of \L uczak \cite{MR1329866}, which established uniform sub-Gaussian tail bounds for the rescaled height of random labeled trees. \L uczak showed in particular that there exists an absolute constant $C>0$ such that for $\randomtree_n \in_u \cT(n)$, for all $k \ge \sqrt{n}$, 
\[
\p(\height(\randomtree_n) = k) \le C \frac{n!}{n^k(n-k)!} \frac{k^3}{n^2}\, .
\]
A sub-Gaussian tail bound follows on observing that $n!/(n^k(n-k)!)=\prod_{i=0}^{k-1}(1-i/n) \le \exp(-k(k-1)/(2n))$. 

Kolchin \cite[Theorem 2.4.3]{MR865130} proved a far-reaching generalization of the above distributional results, showing that $n^{-1/2}\height(\randomtree_{\mu,n})$ converges in distribution, to a random variable with sub-Gaussian tails, whenever $\sum_{i \ge 1} i\mu_i=1$ and $\sum_{i \ge 1} i^2\mu_i \in (0,\infty)$. All the models described in the previous paragraphs are handled by Kolchin's results.
This bound plays an essential role in Aldous's proof that critical Bienaym\'e trees conditioned to have $n$ vertices converge to the {\em Brownian continuum random tree (CRT)}, whenever the offspring distribution has finite variance. (The recent paper \cite{igorus} uses Theorem~\ref{thm.stochasticorder2} to prove lower bounds on the height of critical Bienaym\'e trees conditioned to have $n$ vertices, without any further assumptions on the offspring distribution. That work also establishes convergence of the rescaled height for critical offspring distributions in the domain of attraction of a Cauchy random variable.)

In contrast to our work, almost all recent research on non-asymptotic bounds for heights of random trees has been focussed on showing that, for sequences of trees which converge in distribution (after rescaling) to limiting continuum random trees, the tail bounds which hold for the height of the limit object can already be observed in the finite setting. In the ``finite variance'' case, where the limiting object is the Brownian CRT, this is accomplished in \cite{MR3077536,MR2956056}. Building on \cite{MR3077536}, such tail bounds have also been proved for random graph ensembles which are {\em not} trees, but which have the Brownian CRT as their scaling limit; see \cite{MR3551197,MR4132643}. 
In the case of {\em heavy-tailed} degree distributions, for which the associated random trees converge in distribution to the so-called {\em $\alpha$-stable trees}, non-asymptotic tail bounds for the height (which match those of the limiting objects) have been obtained by Kortchemski \cite{MR3651047}. 

One of the main points of this work is to show that the assumption of convergence under rescaling, a feature of all the above works, is not necessary in order to obtain strong, non-asymptotic bounds on the height. Indeed, in the setting of conditioned branching processes $\randomtree_{\mu,n}$, our main results precisely describe what information is required in order to obtain sub-Gaussian tail bounds for $n^{-1/2}\height(\randomtree_{\mu,n})$, and also under precisely what conditions $n^{-1/2}\height(\randomtree_{\mu,n})\to 0$ in probability; in both cases, convergence is not a necessary ingredient. 
In the setting of simply generated trees and random trees with fixed degree sequences, our results likewise do not depend on any assumptions about asymptotic behaviour.

\subsection{Outline}
The remainder of the paper is structured as follows. Section~\ref{sec:degseq_proofs} presents the proofs of our results on random trees with fixed degree sequences, Theorems~\ref{thm.gaussiantails} and~\ref{thm.infinitevariance}. Section~\ref{sec.bienandsimpygen} uses these results to prove our results on Bienaym\'e trees, Theorems~\ref{thm.bieninfinitevariance},~\ref{thm.biensubcriticalnoexpdecay} and~\ref{thm.biengaussiantails}. This section also introduces simply generated trees, and presents our results on their heights. Section~\ref{sec:stochastic}, which can be read independently of the rest of the paper, contains the proof of Theorem~\ref{thm.stochasticorder2}. Finally, in Section \ref{sec:conclusion}, we discuss applications and extensions of our results and techniques to convergence of random trees under rescaling; height bounds and convergence in other random tree models; convergence of the diameter of the configuration model at criticality; and non-asymptotic bounds on distances in the components of the configuration model. 

\section{\bf Proofs of Theorems~\ref{thm.gaussiantails} and~\ref{thm.infinitevariance}}\label{sec:degseq_proofs}\label{sec:bij}

In this section we present the proofs of Theorems \ref{thm.gaussiantails} and \ref{thm.infinitevariance}. To streamline the presentation, a few of the longer proofs are postponed to Section~\ref{sec:missingproofs}.
Our approach is based on a bijection between rooted trees on $[n]$ and sequences in $[n]^{n-1}$ introduced by Foata and Fuchs \cite{FoataFuchs}. (See also the recent note by Addario-Berry, Blanc-Renaudie, Donderwinkel, Maazoun and Martin on probabilistic applications of the bijection \cite{us}.) We use the version of the bijection introduced in \cite[Section 3]{us}, specialized to trees with a fixed degree sequence. 

To explain the bijection, it is convenient to focus on degree sequences with a particular form. Given a degree sequence $\dseq=(d_1,\ldots,d_n)$, define another degree sequence $\dseq'=(d_1',\ldots,d_n')$ as follows. 
Let $m$ be the number of non-zero entries of $\dseq$; necessarily $1 \le m \le n-1$. List the non-zero entries of $\dseq$ in order of appearance as $d_1',\ldots,d_m'$, and then set $d_{m+1}'=\ldots=d_n'=0$. So, for example, if $\dseq=(1,0,3,0,0,2,0)$ then $\dseq'=(1,3,2,0,0,0,0)$. Say that the degree sequence $\dseq'$ is {\em compressed}. (So a degree sequence is compressed if all of its non-zero entries appear before all of its zero entries.) 

There is a natural bijection between $\cT_{\dseq}$ and $\cT_{\dseq'}$: from a tree $\tree\in \cT_{\dseq}$, construct a tree $\tree' \in \cT_{\dseq'}$ by relabelling the non-leaf vertices of $\tree$ as $1,\ldots,m$ and the leaves of $\tree$ as $m+1,\ldots n$, in both cases in increasing order of their original labels. 
Using this bijection provides a coupling $(\randomtree,\randomtree')$ of uniformly random elements of $\cT_{\dseq}$ and $\cT_{\dseq'}$, respectively, such that $\randomtree$ and $\randomtree'$ have the same height. It follows that any tail bound for the height of a uniformly random tree in $\cT_{\dseq'}$ applies {\em verbatim} to the height of a uniformly random tree in $\cT_{\dseq}$. 

Now, let $\dseq=(d_1,\dots,d_n)$ be a compressed degree sequence, so there is $1 \le m \le n-1$ such that $d_i=0$ if and only if $i>m$.  Write $n_0=n_0(\dseq)=n-m$ for the number of leaves in a tree with degree sequence $\dseq$.  Then define
\begin{align*}
\cS_{\dseq}&:=\left\{(v_1,\dots, v_{n - 1}):|\{k:v_k=i\}|=d_i\text{ for all }i\in [n]\right\}\, .
\end{align*}
For example, if $\dseq=(1,3,2,0,0,0,0)$ then $\cS_{\dseq}$ is the set of all permutations of the vector $(1,2,2,2,3,3)$, so has size ${6 \choose 1,3,2} = 60$. 

The following bijection between $\cS_{\dseq}$ and $\cT_{\dseq}$ appears in \cite[Section 3]{us}. For a sequence $\sequence{v}=(v_1,\dots, v_{n-1}) \in \cS_{\dseq}$, we say that $j\in \{2,\dots, n-1\}$ is the location of a repeat of $\sequence{v}$ if 
$v_j=v_i$ for some $i<j$. 
\begin{tcolorbox}[title=Bijection $\tree$ between $\cS_{\dseq}$ and $\cT_{\dseq}$.]
	\begin{itemize}
		\item Let $j(0)=1$, let $j(1)<j(2)<\dots<j(n_0-1)$ be the locations of the repeats of the sequence $\sequence{v}$, and let $j(n_0)=m+n_0=n$.
		\item For $i=1,\dots, n_0$, let $P_i$ be the path
		$(v_{j(i-1)}, \dots, v_{j(i)-1}, m+i)$.
		\item Let $\tree(\sequence{v}) \in \cT_{\dseq}$ have root $v_1$ and edge set given by the union of the edge sets of the paths
		$P_1, P_2, \dots, P_{n_0}$.
	\end{itemize}
\end{tcolorbox}
The inverse of the bijection works as follows. Fix a tree $\tree \in \cT_{\dseq}$. Let $S_0 = \{r(\tree)\}$ consist of the root of $\tree$. Recursively, for $1 \le i \le n_0$ let $P_i$ be the path from $S_{i-1}$ to $m+i$ in $\tree$, and let $P_i^*$ be $P_i$ excluding its final point $m+i$. Then let $\sequence{v}=\sequence{v}(\tree)$ be the concatenation of $P_1^*,\ldots,P_{n_0}^*$. For later use, we  observe that this bijection implies that 
\begin{equation}\label{eq:td_size}
|\cT_{\dseq}| = {n-1 \choose d_1,\ldots,d_n} = \frac{(n-1)!}{\prod_{i\in[n]} d_i!}. 
\end{equation}
This formula (which appears as Theorem~5.3.4 in \cite{MR1676282}) holds for all degree sequences, not just compressed ones, by the observation from earlier in the section. 

We now discuss how to use the bijection to bound the height of $\tree(\sequence{v})$. 
For this, we think of the bijection as constructing $\tree$ from $\sequence{v}(\tree)=(v_1,\ldots,v_{n-1})$ by adding vertices one-at-a-time, in order of their first appearance in the concatenation of $P_1,\ldots,P_{n_0}$. Formally, define a permutation $(w(1),\ldots,w(n))=(w_\tree(1),\ldots,w_\tree(n))$ of $[n]$ as follows. For $1 \le k \le n$, let $w(k)=v_k$ if $k$ is not the location of a repeat, and let $w(k)=m+r$ if $j(r)=k$ (so either $r < n_0$ and $k$ is the location of the $r$'th repeat in $\sequence{v}$, or $r=n_0$ and $k=n$). Then let $\tree_k$ be the subtree of $\tree$ with vertices $w(1),\ldots,w(k)$. An example appears in Figure~\ref{fig:sequence_ex}. 

\begin{figure}[hbt]
\begin{centering}
\includegraphics[width=0.7\textwidth]{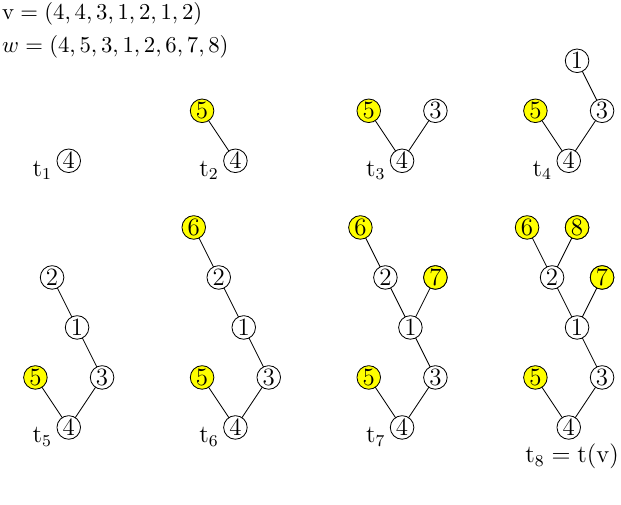}
\caption{This figure illustrates the bijection and the sequential construction. In this example, we have $\pi(2)=5$ since $\tree_5$ is the first tree in the sequence containing vertex $2$. We also have $\rho(2)=4$, since $3$ is the minimal $k$ such that $\sum_{1\leq  j\leq k}(d_{i(j)}-1)$ is at least $2$ and $\tree_4$ is the first tree in the sequence to contain vertices $\{i(1),i(2),i(3)\}=\{4,3,1\}$.}
\label{fig:sequence_ex}
\end{centering}
\end{figure}

Let $(\pi(i),i \in [n])=(\pi_\tree(i),i \in [n])$ be the inverse permutation of $w$, so $\pi(i) = w^{-1}(i)$ is the step at which the vertex with label $i$ is added when constructing $\tree$ from $\sequence{v}(\tree)$. List the non-leaf vertices $\{1,\ldots,m\}$ of $\tree$ in the order they appear when constructing $\tree$ as $(i(1),\ldots,i(m))=(i_\tree(1),\ldots,i_\tree(m))$. 
In other words, $(i(1),\ldots,i(m))$ is the permutation of $\{1,\ldots,m\}$ such that $(\pi(i(1)),\ldots,\pi(i(m)))$ is increasing. 
Then for $0 \le x \le n_0-1$, let $k=k(x)=k_\tree(x)$ be minimal so that
\[
\sum_{j=1}^k (d_{i(j)}-1) \ge x,
\]
and let $\rho(x)=\rho_\tree(x) = \pi(i(k(x)))$, so that $v_{\rho(x)}=i(k(x))$. Equivalently, $\rho(x)$ is the smallest integer $\rho$ such that the subtree of $\tree$ consisting of the nodes of $\tree_\rho$ and all their children has strictly more than $\lceil x \rceil$ leaves; this is also the smallest integer $\rho$ such that $\tree_\rho$ contains vertices $i(1),\ldots,i(k(x))$.
 Note that $\rho(x)$ is non-decreasing, and since 
\[
\sum_{j=1}^m (d_{i(j)}-1) = (\sum_{j=1}^m d_j)-m = n-1-m=n_0-1\, ,
\] 
the tree $\tree_\rho(n_0-1)$ has $n_0$ leaves. It follows that all vertices of $\tree$ that do not belong to $\tree_{\rho(n_0-1)}$ have degree either $0$ or $1$ in $\tree$. This in particular implies that if $\tree$ has no vertices of degree exactly $1$ then $\rho(n_0-1) = \pi(i(m))$ and $\tree_{\rho(n_0-1)}$ contains all vertices of $\tree$ except a subset of its leaves, so $\height(\tree) \le \height(\tree_{\rho(n_0-1)})+1$.

For any real sequence $0 \le y_0 < y_1 < \ldots < y_N = n_0-1$, 
writing $\tree=\tree(\sequence{v})$ and $\tree_k=\tree_k(\sequence{v})$, 
we may now bound $\height(\tree)$ via the telescoping sum
\begin{equation}\label{eq.telescopingsum}
\height(\tree)
\le \height(\tree_{\rho(y_0)})
+ \Big(\sum_{i=1}^N 
\big(\height(\tree_{\rho(y_i)})
-
\height(\tree_{\rho(y_{i-1})})
\big)
\Big) + (\height(\tree)-\height(\tree_{\rho(n_0-1)})),
\end{equation}
and the final term in the sum is at most $1$ if $\tree$ has no vertices of degree $1$. 
Here is the key lemma which makes such a decomposition useful. Given $\tree \in \cT_{\dseq}$, let $\sequence{v}=\sequence{v}(\tree)=(v_1,\ldots,v_{n-1})$.
\begin{lem}\label{lem:geometric_attachment}
Fix any degree sequence $\dseq=(d_1,\ldots,d_n)$. Then for all $0 \le x \le y$ and for any integer $b\geq 0$, if $\randomtree \in_u \cT_{\dseq}$ then 
\[
\p(
\dist(v_{\rho(y)},\randomtree_{\rho(x)})\, 
> b) \le \pran{1-\frac{x}{n-1}}^b.
\]
\end{lem}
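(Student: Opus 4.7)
The plan is to view the ancestor chain of $v_{\rho(y)}$ in $\randomtree$ as a strictly decreasing sequence of positions in $\sequence{v}=\sequence{v}(\randomtree)\in_u \cS_\dseq$, and to iterate a single-step conditional estimate. Setting $p_0=\rho(y)$ and $p_{k+1}=\pi(v_{p_k-1})$, the $k$-th ancestor $u_k=v_{p_k}$ of $v_{\rho(y)}$ is an interior vertex of $\randomtree$ and lies in $\randomtree_{\rho(x)}$ if and only if $p_k\le \rho(x)$; consequently
\[
\{\dist(v_{\rho(y)},\randomtree_{\rho(x)})>b\}=\{p_b>\rho(x)\},
\]
so it suffices to prove $\p(p_b>\rho(x))\le (1-x/(n-1))^b$.

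The heart of the argument is a one-step conditional bound: for each $k\ge 0$, conditional on the history of the chain up to step $k$ on the event $\{p_k>\rho(x)\}$, the probability that $p_{k+1}\le\rho(x)$ is at least $x/(n-1)$. Granted this, the desired inequality follows immediately by induction on $b$.

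To prove the one-step bound I would reveal $\sequence{v}$ in stages using the exchangeability of the uniform distribution on $\cS_\dseq$. First expose the prefix $v_1,\dots,v_{\rho(x)}$, which determines $\randomtree_{\rho(x)}$ and the set of ``good labels'' $G=\{i(1),\dots,i(k(x))\}$; then expose the labels at the successively visited positions $p_0, p_0-1, p_1-1, \dots, p_{k-1}-1$. On the event $\{p_k>\rho(x)\}$, none of these chain-visited labels lies in $G$. By the exchangeability of the multiset permutation, the next inspected label $v_{p_k-1}$ is conditionally distributed as a uniformly random element of the multiset of as-yet-unplaced labels, and $\{p_{k+1}\le\rho(x)\}$ coincides with $\{v_{p_k-1}\in G\}$. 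The defining inequality $\sum_{j\le k(x)}(d_{i(j)}-1)\ge x$ gives $\sum_{j\le k(x)} d_{i(j)}\ge x+k(x)$, so at least an $x/(n-1)$ fraction of the $n-1$ total label occurrences in $\sequence{v}$ lie in $G$. A careful accounting of how many good-label occurrences are consumed by the stage-one revelation (and, on the stated event, none by the stage-two revelations) then yields the $x/(n-1)$ lower bound.

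The main obstacle, I expect, is the bookkeeping of the conditional distribution: the quantities $\rho(x),\rho(y)$ and the inspected positions $p_k$ are all random functions of $\sequence{v}$, so revealing too much in stage one can destroy the symmetry needed for the exchangeability argument in stage two. The cleanest fix is to work with a filtration $(\cF_k)$ that exposes only the minimum amount of information required to determine the next step of the chain, so that at each step the conditional law of the remaining positions is a uniform random permutation of the unplaced multiset, and the one-step bound follows from the elementary multiset count together with the key inequality on $\sum_{j\le k(x)}(d_{i(j)}-1)$.
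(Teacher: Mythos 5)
Your reduction of the event to the ancestor chain is right: with $p_0=\rho(y)$ and $p_{k+1}=\pi(v_{p_k-1})$, the set $\randomtree_{\rho(x)}$ is ancestor-closed, so $\dist(v_{\rho(y)},\randomtree_{\rho(x)})>b$ if and only if $p_b>\rho(x)$, and a one-step bound of $x/(n-1)$ for the conditional probability of entering $\randomtree_{\rho(x)}$ would indeed finish the proof. The gap is in the one-step bound itself, and it is not merely the ``bookkeeping'' you flag as an obstacle -- the mechanism you propose is the wrong conditioning.

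The difficulty is that exposing the prefix $v_1,\dots,v_{\rho(x)}$ leaks far too much information. After this revelation, \emph{all} $\rho(x)$ revealed entries are occurrences of labels in $G=\{i(1),\dots,i(k(x))\}$ (every entry in the prefix is a good label by construction), so the number of unrevealed occurrences of good labels is exactly $\sum_{j\le k(x)}d_{i(j)}-\rho(x)$. The defining inequality only gives $\sum_{j\le k(x)}d_{i(j)}\ge \lceil x\rceil + k(x)$, and $\rho(x)$ can be as large as $\sum_{j\le k(x)}d_{i(j)}-1$. For a binary degree sequence ($d_{i(j)}=2$ for all non-leaves) one has $k(x)=\lceil x\rceil$ and $\rho(x)$ can equal $2\lceil x\rceil-1$, leaving a \emph{single} unrevealed good-label occurrence among roughly $n-1-2x$ remaining positions. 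For $x\ge 2$ and $n$ large, $1/(n-1-2x)$ is strictly less than $x/(n-1)$, so even granting full exchangeability of the unrevealed positions, the conditional probability of hitting $G$ at the next step would fall below the target. In other words, no ``careful accounting'' repairs the estimate: the prefix-revelation scheme is too strong a conditioning, and the one-step bound is simply false in that conditional world.

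There is a second, independent problem: the prefix $v_1,\dots,v_{\rho(x)}$ together with the labels at the chain positions does not determine $\rho(y)$ or the $p_k$ (which are functions of the entire segment $v_{\rho(x)+1},\dots,v_{\rho(y)}$, not just of a few isolated positions). So the two-stage exposure you describe does not even pin down which positions are being revealed, and the claim that the remaining entries form a uniform permutation of the remaining multiset is not a consequence of exchangeability alone.

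The paper's proof conditions on something strictly weaker and more useful: the \emph{order} $(i_{\randomtree}(1),\dots,i_{\randomtree}(j))$ of the first appearances, not their positions. Under that conditioning, the law on $\cS_{\dseq}(i(1),\dots,i(j))$ is still rich enough that one can exactly compute, by explicit insertion/deletion bijections, the probability that the first occurrence of $i(r)$ is immediately preceded by a repeat of $i(\ell)$ (the events $E(r,\ell)$) or by the first occurrence of $i(r-1)$ (the event $F(r)$); summing over $\ell\le q$ gives the $\ge x/(n-1)$ base case, and the same bijections reduce the degree sequence by one entry, enabling the induction on $b$. That combinatorial reduction does the work your filtration was meant to do, and it is precisely the step your proposal is missing.
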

The proof of Lemma~\ref{lem:geometric_attachment} appears in Section~\ref{sec:missingproofs}, below. 
This lemma is not quite enough to control an increment of the form 
$\height(\randomtree_{\rho(y)})
-
\height(\randomtree_{\rho(x)})
$, since for that we need 
to control the distance from $v_{\rho_\tree(z)}$ to $\tree_{\rho(x)}$ for {\em all} $x \le z \le y$ -- i.e.\ we need a ``maximal inequality'' version of Lemma~\ref{lem:geometric_attachment}. We achieve this in the following corollary. (Although in this section we defer the proofs of most of the supporting results, we prove the corollary immediately, as its proof is quite short.) 
\begin{cor}\label{cor.maximalinequality}
Fix any degree sequence $\dseq=(d_1,\ldots,d_n)$ with 
$n_1(\dseq)=0$, and let $\randomtree \in_u \cT_{\dseq}$. 
Then for any integers $0 \le x \le y\le n_0-1$ and any $p \in (0,1)$ and $b > 0$, 
\[
\p(
\height(\randomtree_{\rho(y)}) - \height(\randomtree_{\rho(x)})
> b+1)
\le 
\frac{y-x}{(1-p)b} \pran{1-\frac{x}{n-1}}^{\lceil pb\rceil}\, .
\]
\end{cor}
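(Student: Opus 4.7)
The plan is to bound the event $\{\height(\randomtree_{\rho(y)}) - \height(\randomtree_{\rho(x)}) > b+1\}$ by the count of ``bad'' indices where $v_{\rho(z)}$ lies far from $\randomtree_{\rho(x)}$, and then apply Markov's inequality.

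For each integer $z \in (x,y]$ let $E_z = \{\dist(v_{\rho(z)},\randomtree_{\rho(x)}) > \lfloor pb \rfloor\}$. Lemma~\ref{lem:geometric_attachment}, applied with $z$ in place of $y$ and $\lfloor pb \rfloor$ in place of $b$, gives $\p(E_z) \le (1 - x/(n-1))^{\lfloor pb \rfloor}$. Setting $N = \sum_{z \in (x,y]\cap\Z}\one_{E_z}$, linearity of expectation yields $\E[N] \le (y-x)(1 - x/(n-1))^{\lfloor pb \rfloor}$.

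The deterministic heart of the argument is to show that $\{\height(\randomtree_{\rho(y)}) - \height(\randomtree_{\rho(x)}) > b+1\} \subseteq \{N \ge (1-p)b\}$. On the height-gap event, pick $v \in \randomtree_{\rho(y)}$ realising the height, and let $u$ be its last ancestor lying in $\randomtree_{\rho(x)}$. The path $u = u_0, u_1, \ldots, u_\ell = v$ has length $\ell \ge \lfloor b\rfloor + 2$, each $u_i$ is at distance exactly $i$ from $\randomtree_{\rho(x)}$, and each intermediate $u_1,\ldots,u_{\ell-1}$ is a non-leaf of $\randomtree$ (as $u_{i+1}$ is a child). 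Thus at least $\ell - 1 - \lfloor pb \rfloor \ge \lfloor b\rfloor + 1 - \lfloor pb \rfloor \ge (1-p)b$ of these non-leaves lie at distance strictly greater than $\lfloor pb \rfloor$ from $\randomtree_{\rho(x)}$. Here the hypothesis $n_1(\dseq)=0$ is used: every non-leaf has degree $\ge 2$, so the partial sums $a_k := \sum_{j=1}^k(d_{i(j)}-1)$ are strictly increasing, and each non-leaf $v_{i(k^*)}$ with $k(x) < k^* \le k(y)$ coincides with $v_{\rho(z)}$ for a non-empty block of integers $z \in \{a_{k^*-1}+1,\ldots,a_{k^*}\}$, which by definition of $k(x)$ and $k(y)$ meets $(x,y]\cap\Z$ in at least one point. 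Distinct non-leaves give disjoint blocks of $z$, so each of the $(1-p)b$ far non-leaves on the $uv$-path contributes a distinct index $z$ to $N$, giving $N \ge (1-p)b$. Markov's inequality then yields
\[
\p\bigl(\height(\randomtree_{\rho(y)}) - \height(\randomtree_{\rho(x)}) > b+1\bigr) \le \p(N \ge (1-p)b) \le \frac{\E[N]}{(1-p)b} \le \frac{y-x}{(1-p)b}\bigl(1-\tfrac{x}{n-1}\bigr)^{\lfloor pb\rfloor},
\]
as required.

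I expect the main obstacle to be the combinatorial step --- translating a long path in $\randomtree_{\rho(y)}\setminus\randomtree_{\rho(x)}$ into a lower bound on the number of distinct indices $z$ with $E_z$ occurring. The hypothesis $n_1(\dseq)=0$ is essential here: degree-one non-leaves would lengthen the bad path without appearing among the $v_{\rho(z)}$, decoupling path length from the count $N$ and breaking the bijection $k^* \leftrightarrow \{z : k(z)=k^*\}$ on which the counting argument rests.
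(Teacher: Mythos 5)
Your proof is correct and takes essentially the same approach as the paper's: apply Lemma~\ref{lem:geometric_attachment} to bound the probability that any single $v_{\rho(z)}$ is far from $\randomtree_{\rho(x)}$, argue deterministically that a large height gap forces many of the $v_{\rho(z)}$ (with $z\in(x,y]$) to be far, and close with Markov's inequality. The only cosmetic difference is that you work directly with the deepest vertex of $\randomtree_{\rho(y)}$ and its exit point from $\randomtree_{\rho(x)}$, whereas the paper first records the intermediate bound $\height(\randomtree_{\rho(y)})-\height(\randomtree_{\rho(x)})\le 1+\max_z\dist(v_{\rho(z)},\randomtree_{\rho(x)})$; your more explicit accounting of the ``blocks'' $\{a_{k^*-1}+1,\ldots,a_{k^*}\}$ is a useful clarification of the step the paper dispatches with ``by considering the path.''
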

\begin{proof}
By relabelling the vertices, we may assume without loss of generality that $\dseq$ is compressed. Write $m=\max(i: d_i \ne 0)$. 

Since $\randomtree$ has no vertices of degree $1$, for all $k \in [m]$
we have $d_{i(k)} \ge 2$ and so $\sum_{j=1}^k (d_{i(j)}-1) > \sum_{j=1}^{k-1} (d_{i(j)}-1)$. It follows that the non-leaf vertices of $\randomtree_{\rho(y)}$ lying outside of $\randomtree_{\rho(x)}$ are 
a subset of 
$\{i(k(x+1)),\ldots,i(k(y))\}=\{v_{\rho(x+1)},\ldots,v_{\rho(y)}\}$, 
and so 
\begin{align*}
\height(\randomtree_{\rho(y)}) - \height(\randomtree_{\rho(x)})
& \le 1 + \max
\big(
\dist(v_{\rho(z)},\randomtree_{\rho(x)}): 
z \in\{ x+1,\ldots, y\}
\big). 
\end{align*}
Now, if for a given $z \in\{ x+1,\ldots, y\}$ we have 
\[
\dist(v_{\rho(z)},\randomtree_{\rho(x)})) > b
\]
then by considering the path in $\randomtree$ from $v_{\rho(z)}$ to $\randomtree_{\rho(x)}$, we see that there must be at least $(1-p)b$ distinct vertices $v \in \{v_{\rho(x+1)},\ldots,v_{\rho(y)}\}$ for which $\dist(v,\randomtree_{\rho(x)}) > \lceil pb\rceil $. It follows that 
\begin{align*}
&  \p(\height(\randomtree_{\rho(y)}) - \height(\randomtree_{\rho(x)}) > b+1)\\
& \le 
\p\Big(\max
\big(
\dist(v_{\rho(z)},\randomtree_{\rho(x)}): 
z \in\{ x+1,\ldots, y\}
\big)>b\Big) \\
& \le 
\p\Big(\big| 
\{z \in \{x+1,\ldots,y\}:
\dist(v_{\rho(z)},\randomtree_{\rho(x)})>\lceil pb\rceil \} 
\big| > (1-p)b\Big) \\
& \le 
\frac{1}{(1-p)b} \E{\big| 
\{z \in \{x+1,\ldots,y\}:
\dist(v_{\rho(z)},\randomtree_{\rho(x)})>\lceil pb \rceil \} 
\big|} \\
&= \frac{1}{(1-p)b}\sum_{z=x+1}^y 
\p\big(\dist(v_{\rho(z)},\randomtree_{\rho(x)})>\lceil pb\rceil \big)\, ,
\end{align*}
from which the corollary follows by the bound from Lemma~\ref{lem:geometric_attachment}. 
\end{proof}
Combining the telescoping sum bound \eqref{eq.telescopingsum} with  Corollary~\ref{cor.maximalinequality} will allow us to prove tail bounds for the heights of random trees $\randomtree\in_u \cT_{\dseq}$ for degree sequences $\dseq$ with $n_1(\dseq)=0$. The next lemma allows us to transfer tail bounds from this setting to that of general trees with fixed degree sequences.

\begin{lem}\label{lem:degree_one_stretch}
	Fix a degree sequence $\dseq=(d_1,\ldots,d_n)$ and write $n_0=n_0(\dseq)$ and $n_1=n_1(\dseq)$. 
Let $\dseq'$ be obtained from $\dseq$ by removing all entries which equal $1$. 
	Let $\randomtree \in_u \cT_\dseq$ and $\randomtree' \in_u \cT_{\dseq'}$. 
	Then for any $h> 0$ and $y \ge 4hn/(n-n_1)$, 
	\[
	\p(\height(\randomtree)\ge  y) \le \p(\height(\randomtree') >  h ) + n_0\exp(-y(n-n_1)/(8n))\, .
	\]
\end{lem}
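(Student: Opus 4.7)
The plan is to couple $\randomtree$ with $\randomtree'$ via a stretching construction and reduce the required tail bound to a concentration estimate for the number of degree-one vertices inserted along any one root-to-leaf path.

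First I would set up the coupling. Since the lemma depends on $\randomtree$ and $\randomtree'$ only through their marginal distributions, I may couple them so that $\randomtree'$ is obtained from $\randomtree$ by contracting each maximal chain of degree-one vertices to a single edge (and dropping any above-root chain, rerooting at the first non-degree-one vertex). Then $\randomtree'$ has $n - n_1$ vertices with degree sequence $\dseq'$ (after a fixed relabeling of the non-degree-one vertices), and a direct counting argument based on \eqref{eq:td_size} shows $\randomtree' \in_u \cT_{\dseq'}$. Moreover, conditionally on $\randomtree' = \tree'$, the vector $(K_0, K_1, \ldots, K_{n-n_1-1})$ of slot-counts --- where slot $0$ is the above-root chain and slots $1,\ldots,n-n_1-1$ are indexed by the edges of $\tree'$ --- is uniform over non-negative integer compositions of $n_1$ into $n - n_1$ parts.

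Next I would use the decomposition
\[
\p(\height(\randomtree) > y) \le \p(\height(\randomtree') > \lceil h \rceil) + \p\bigl(\height(\randomtree) > y,\ \height(\randomtree') \le \lceil h \rceil\bigr),
\]
and bound the second term by union-bounding over the $n_0$ leaves of $\randomtree'$ (which coincide with the leaves of $\randomtree$). For a leaf $v$ of $\tree'$ at depth $L'_v \le \lceil h \rceil$, the depth of $v$ in $\randomtree$ is $L'_v + Z_v$, where $Z_v$ is the sum of exactly $L'_v + 1$ of the $K_i$'s: those indexing the above-root slot together with the edges on the root-to-$v$ path in $\tree'$. A direct computation from the uniform composition shows that $Z_v$ has the Beta-Binomial law with parameters $(n_1,\,L'_v+1,\,n-n_1-L'_v-1)$; equivalently, via the stars-and-bars representation, $Z_v$ is negative-hypergeometric, distributed as the number of stars preceding the $(L'_v+1)$-th bar in a uniformly random arrangement of $n_1$ stars and $n-n_1-1$ bars.

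The technical heart of the argument is then the per-leaf tail bound
\[
\p\bigl(Z_v > y - L'_v \mid \tree'\bigr) \le \exp(-y/4),
\]
valid for all $L'_v \le \lceil h \rceil$ when $y \ge 8hn/(n-n_1)$; unioning over the $n_0$ leaves delivers the $n_0 \exp(-y/4)$ term of the lemma. The hypothesis $y \ge 8hn/(n-n_1)$ is calibrated so that $y - L'_v$ exceeds the mean $(L'_v+1)\,n_1/(n-n_1)$ of $Z_v$ by a large constant factor \emph{and} is at least roughly $7h$ in absolute terms; both facts are used. The natural approach is a multiplicative Chernoff bound on the hypergeometric random variable counting the bars in the first $y$-or-so positions, and the main obstacle is arranging this bound so that the constant $1/4$ in $\exp(-y/4)$ holds uniformly in $\alpha = (n-n_1)/n \in (0,1]$, including the difficult regime where $\alpha$ is small. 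The specific factor $8$ in the hypothesis is chosen precisely to make this possible.
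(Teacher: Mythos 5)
Your proposal follows the paper's route: the same coupling by contracting degree-one chains, the same decomposition conditioning on $\{\height(\randomtree')\le\lceil h\rceil\}$, the same identification of the slot-counts with a uniform weak composition of $n_1$ into $n-n_1$ parts, and the same reduction to a per-leaf tail bound followed by a union bound over the $n_0$ leaves. All of that matches the paper and is correct.

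However, the per-leaf bound you leave as the ``technical heart'' --- $\p(Z_v > y - L'_v \mid \tree') \le \exp(-y/4)$ for $L'_v \le \lceil h\rceil$ and $y \ge 8hn/(n-n_1)$ --- is not true, and the factor $8$ in the hypothesis does not rescue it. Write $\delta = (n-n_1)/n$ and $X_m = m + Z_v$ with $m = L'_v + 1$; then $\p(X_m > y) = \p(W_{\lceil y\rceil} < m)$ where $W_t$ is a hypergeometric count with mean $\approx \delta t$, and for $m = O(1)$ this decays like $\exp(-\Theta(\delta y))$, not $\exp(-y/4)$. For a concrete check, take $n = 10^4$, $n - n_1 = 100$, $\dseq'$ a star (so $\height(\tree') = 1$ and $n_0 = 99$), $h = 2$, $y = 1600 = 8hn/(n-n_1)$: then for any leaf $v$, $\p(Z_v > y - 1) = \p(W_{1600} \le 1)$ is of order $e^{-15}$, whereas $e^{-y/4} = e^{-400}$, and even the lemma's conclusion fails --- the left side is at least of order $e^{-18}$, since a single chain already exceeds length $1600$ with probability $\approx e^{-17}$, while the right side is $\approx 99\,e^{-400}$. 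This is also a bug in the paper's own proof: the asserted inequality $\p(W_t \le \e W_t/2) \le e^{-t/2}$ is stronger than what a Chernoff bound gives, namely $e^{-\e W_t/8} \approx e^{-\delta t/8}$. What both arguments actually establish is a bound of the form $n_0\exp(-c\delta y)$ for an absolute constant $c>0$; the lemma must carry this extra factor of $\delta$ in the exponent, and the places it is invoked (for instance the step $n e^{-y/4} \le e^{-\delta x^2/2^{13}}$ in the proof of Theorem~\ref{thm.gaussiantails}) need to be re-checked with the corrected exponent.
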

Lemma~\ref{lem:degree_one_stretch}, whose proof appears in Section~\ref{sec:missingproofs},  is the last tool we need to  prove Theorem~\ref{thm.gaussiantails}. 
\begin{proof}[Proof of Theorem~\ref{thm.gaussiantails}]

Note that if $n \le 64$ then the result is trivially true, since $5e^{-\delta x^2/2^{12}} >1$ for $x < 8$, and $\p(\height(\tree)> x n^{1/2})=0$ for $x \ge 8$. We thus hereafter assume that $n > 64$. 

As described above, we shall bound $\p(\height(\randomtree) > xn^{1/2})$ 
by bounding the height via the telescoping sum (\ref{eq.telescopingsum}), which for the random tree $\randomtree$ becomes 
\begin{equation}\label{eq:telescope2}
\height(\randomtree)
\le \height(\randomtree_{\rho(y_0)})
+ \Big(\sum_{i=1}^N 
\big(\height(\randomtree_{\rho(y_i)})
-
\height(\randomtree_{\rho(y_{i-1})})
\big)
\Big) + (\height(\randomtree)-\height(\randomtree_{\rho(n_0-1)})),
\end{equation}
for a suitably chosen sequence $0 \le y_0 < y_1 < \ldots < y_N=n_0-1$, 
and then controlling the contribution of each summand. 

We first treat the case that $\dseq$ does not contain any entries equal to $1$. In this case, the final term in \eqref{eq:telescope2} is at most $1$, since when there are no vertices of degree one, all vertices of $\randomtree$ not lying in $\randomtree_{\rho(n_0-1)}$ are leaves. It follows that for any positive constants $(b_i,i \ge 0)$ such that $\sum_{i=0}^N b_i n^{1/2}+1 < x n^{1/2}$, 
\begin{equation}\label{eq:newtelescope}
\p(\height(\randomtree) \ge xn^{1/2})
\le \sum_{i=0}^N \p\big(\height(\randomtree_{\rho(y_i)})
-
\height(\randomtree_{\rho(y_{i-1})})
 > b_i n^{1/2}\big)\, ,
\end{equation}
where we use the convention that $\height(\randomtree_{\rho(y_{-1})})=0$. 
We choose $b_0=x/2$ and $b_i = x\cdot i/2^{i+2}$, so that 
$\sum_{i\ge 0} b_i = x$ and $\sum_{i=0}^{N+1} b_i < x$ for all $N\in \N$. We will later pick $N$ for which $b_{N+1}n^{1/2}\ge 1$, so that $\sum_{i=0}^N b_i n^{1/2}+1 \le \sum_{i=0}^{N+1} b_i n^{1/2} < xn^{1/2}$ as required.

Provided that $b_in^{1/2} \ge 4$, Corollary~\ref{cor.maximalinequality} applied with $b=b_in^{1/2}-1$ and $p=(b-1)/2b$ yields 
\begin{align*}
\p\big(\height(\randomtree_{\rho(y_i)})
-
\height(\randomtree_{\rho(y_{i-1})})
 > b_i n^{1/2}\big)
& \le \frac{y_i-y_{i-1}}{(1-p)b}\pran{1-\frac{y_{i-1}}{n-1}}^{\lceil pb\rceil}\\
 & \le \frac{2(y_i-y_{i-1})}{b+1}\pran{1-\frac{y_{i-1}}{n-1}}^{(b-1)/2} \\
 & \le \frac{2(y_i-y_{i-1})}{b_i n^{1/2}} \pran{1-\frac{y_{i-1}}{n-1}}^{b_i n^{1/2}/4}\, .
\end{align*}

Now set $y_i = \min(n_0-1,2^{i-2}xn^{1/2})$, and note that $y_0=\min(n_0-1,b_0n^{1/2}/2)$. For $i \ge 1$, if $y_{i-1}=n_0-1$ then $\randomtree_{\rho(y_i)}=\randomtree_{\rho(y_{i-1})}$ so $\p(\height(\randomtree_{\rho(y_i)})
-
\height(\randomtree_{\rho(y_{i-1})})
 > b_i n^{1/2})=0$.
On the other hand, if $y_{i-1} =2^{i-3}xn^{1/2} < n_0-1$, then we have $2(y_i-y_{i-1})/(b_in^{1/2}) \le 2^{2i}/i$ and $1-y_{i-1}/(n-1) \le e^{-2^{i-3}x/n^{1/2}}$, so if also $b_in^{1/2} \ge 4$ then this gives 
\begin{align}
\p\big(\height(\randomtree_{\rho(y_i)})
-
\height(\randomtree_{\rho(y_{i-1})})
 > b_i n^{1/2}\big)
 & \le \frac{2^{2i}}{i} \exp\pran{-\frac{2^{i-3}x}{n^{1/2}}\frac{b_in^{1/2}}{4}}
\nonumber \\
&  = \frac{2^{2i}}{i}\exp\pran{-\frac{ix^2}{2^{7}}}\, .
\label{eq:one_term}
\end{align}
But if $i\ge 2$ is small enough that $2^{i-2}xn^{1/2} \le n_0-1$, then since $n_0< n$, we also have $b_i n^{1/2}= n^{1/2} xi/2^{i+2} \ge x^2 i/16$ and so $b_in^{1/2} \ge 4$ provided $x \ge 6$. If $i=1$ then we also have $n^{1/2}b_i = n^{1/2}x/8 \ge 4$ when $x \ge 6$, since we assumed that $n \ge 64$. It follows that \eqref{eq:one_term} holds for all $i \ge 1$ when $x \ge 6$. 

To handle the $i=0$ term, note that by the definition of $\rho(y_0)$, the subtree of $\randomtree$ consisting of all nodes of $\randomtree_{\rho(y_0)-1}$ and their children contains at most $\lceil y_0\rceil$ leaves. Since there are no vertices of degree exactly $1$, this also implies that $\randomtree_{\rho(y_0)-1}$ has fewer than $\lceil y_0\rceil$ non-leaf vertices. Therefore, 
$\height(\randomtree_{\rho(y_0)-1}) \le \lceil y_0\rceil-1$, so 
$\height(\randomtree_{\rho(y_0)}) \le \lceil y_0 \rceil\le \lceil b_0 n^{1/2}/2\rceil \le  b_0 n^{1/2} $ for $x\geq 6$, and thus
\begin{equation}\label{eq:first_term}
\p\big(\height(\randomtree_{\rho(y_0)})
-
\height(\randomtree_{\rho(y_{-1})}) > b_0 n^{1/2}
\big) = \p(\height(\randomtree_{\rho(y_0)}) > b_0n^{1/2})=0\, .
\end{equation}
We claim that we may use the telescoping sum bound (\ref{eq:newtelescope}) 
with $N=\min(i:2^{i-2}xn^{1/2} \ge n_0-1)$ for $x\ge 8$. For that, we need to show that $b_{N+1}n^{1/2}=x(N+1)2^{-N-3}n^{1/2}\ge 1$. Indeed, note by definition of $N$ that $2^{N-3}x n^{1/2}\le n_0-1\le n$ so that $2^{N-3}\le n^{1/2}/x$. Therefore, $x(N+1)2^{-N-3}n^{1/2}\ge x^22^{-6}\ge 1$ as claimed. 
Then, using  (\ref{eq:one_term}) and (\ref{eq:first_term}) in the telescoping sum bound (\ref{eq:newtelescope}), 
with $N=\min(i:2^{i-2}xn^{1/2} \ge n_0-1)=\min(i:y_i=n_0-1)$, we obtain that for all $x \ge 8$, 
\[
\p(\height(\randomtree) \ge  xn^{1/2})
\le \sum_{i \ge 1}\frac{2^{2i}}{i}\exp\pran{-\frac{ix^2}{2^{7}}} 
= \sum_{i \ge 1} \exp\pran{2i\log 2 - \log i - \frac{ix^2}{2^{7}}}\, .
\]
Provided that $ x^2/2^8\ge 2\log 2$ (and in particular $x\ge 8$), the exponent in the sum is less than $-ix^2/2^8$, so for such $x$ the probability we aim to bound is at most 
\begin{equation}\label{eq:boundnodeg1largex}
\sum_{i \ge 1} \exp(-i x^2 /2^8) = e^{-x^2/2^8}\big(1-e^{-x^2/2^8}\big)^{-1} < 2e^{-x^2 /2^8}< 4e^{-x^2 /2^8}\, .
\end{equation}
On the other hand, if  $ x^2/2^8<2\log 2$ then $4e^{-x^2/2^8} > 4e^{-2\log 2}=1$, which is an upper bound on any probability.
It follows that for all $x>0$, 
\begin{equation}\label{eq.gaussianboundnodeg1}
\p(\height(\randomtree) \ge   xn^{1/2})
< 4e^{-x^2/2^8}\, .
\end{equation}
This proves (in fact, is stronger than) the necessary bound when $\randomtree$ has no vertices of degree~$1$. 

Now suppose that $n_1 > 0$ and 
write $\delta=(n-n_1(\dseq))/n$; so $\delta < 1$. 
By permuting the vertex labels we may assume that all the $1$s in $\dseq$ are at the end, i.e., $d_i \ne 1$ for $i \le n-n_1$ and $d_i=1$ for $n-n_1 < i \le n$. 
 Let $\dseq' = (d_1,\ldots,d_{n-n_1})$ be obtained by removing all entries equal to $1$ from $\dseq$, and let $\randomtree' \in_u \cT_{\dseq'}$. By Lemma~\ref{lem:degree_one_stretch}, for any $h > 0$ and any $y \ge 4hn/(n-n_1)$, 
\begin{align*}
\p(\height(\randomtree) \ge  y) & \le \p(\height(\randomtree')>  h) + n_0 e^{-y(n-n_1)/(8n)} \\
& \le \p(\height(\randomtree')>  h)+ (n-n_1)e^{-y(n-n_1)/(8n)}\, .
\end{align*}
Take $y=xn^{1/2}$ and $h= y(n-n_1)/(4n)=  x'(n-n_1)^{1/2}$, where we have set $x'=x\delta^{1/2}/4$. 
Then the above bound and \eqref{eq.gaussianboundnodeg1} together yield 
\begin{align*}
\p(\height(\randomtree) \ge  xn^{1/2})
& \le \p(\height(\randomtree') >  x'(n-n_1)^{1/2} ) + (n-n_1)e^{-x(n-n_1)/(8n^{1/2})} \\
& < 4e^{-(x')^2/2^8} + (n-n_1)e^{-x(n-n_1)/(8n^{1/2})}\\
& = 4 e^{-\delta x^2/2^{12}} + \delta n e^{-x\delta n^{1/2}/8}.
\end{align*}
We will show that the second term on the right is at most $e^{-\delta x^2/2^{12}}$ for all relevant values of $x$ and $\delta$. 
First, note that we may assume $\delta x^2\ge2^{12} \log 5>16^2$, as otherwise the claimed upper bound in the theorem is at least $1$ which is an upper bound for any probability. Second, we only need to consider $x\leq n^{1/2}$, because otherwise $\height(\randomtree) <xn^{1/2}$ deterministically. 

By taking logarithms and rearranging terms we see that we need to show that
\[8\log(\delta n)\leq \delta n^{1/2}x (1-n^{-1/2}2^{-9}x),\]
but, since $x\leq n^{1/2}$, we see that the last factor is at least $(1-2^{-9})>1/2$, and $\delta x^2>16^2$ gives that $x> 16\delta^{-1/2}$, so it is in fact enough to show that 
 \[\log(\delta n)\leq (\delta n)^{1/2}.\]
Finally, $\log(u)u^{-1/2}$ is at most $2/e<1$ for $u>0$, which establishes this last bound.\end{proof}

In the preceding proof, the first term in the telescoping sum could be essentially ``given away'' - we just used the deterministic bound $\height(\randomtree_{\rho(y_0)}) \le \lceil y_0\rceil$. In the setting of Theorem~\ref{thm.infinitevariance}, where we aim for stronger bounds when $\sigma_{\dseq}$ is large than when it is small, we can not be so casual about this first term. 
The additional tool we require, to bound the height of the random tree constructed during the early stages of the bijection in the case when $\sigma_{\dseq}$ is large, is given in the following proposition. 
\begin{prop}\label{cor.firststep}
	Fix a degree sequence $\dseq=(d_1,\ldots,d_n)$ and let $\randomtree \in_u \cT_{\dseq}$. 
	
	Then 
	taking $\alpha =(1-e^{-2})/24$, 
	with $\sigma=\sigma_\dseq$, we have that for all $b\geq 1$,
	
	\[
	\p\left(\height\left(\randomtree_{\rho(\alpha\sigma n^{1/2})} \right)>\tfrac{bn^{1/2}}{2\sigma} \right)
	\leq \exp\left(-\tfrac{3}{32}\tfrac{bn^{1/2}}{\sigma} \right)+\exp\left(-\tfrac{\alpha b}{8}\right).
	\]
\end{prop}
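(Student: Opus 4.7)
The plan is to bound $\height(\randomtree_{\rho(y)})$ (with $y=\alpha\sigma n^{1/2}$) by the number $k(y)$ of non-leaves of $\randomtree$ it contains, and then estimate $\p(k(y)>K)$ for $K=\lceil bn^{1/2}/(2\sigma)\rceil$. Any internal vertex on a root-to-vertex path in $\randomtree_{\rho(y)}$ must be a non-leaf of $\randomtree$ (since it has a child in the subtree), so $\height(\randomtree_{\rho(y)})\le k(y)$. Since $k(y)$ is by definition the smallest $k$ with $\sum_{j=1}^k(d_{i(j)}-1)\ge y$, the event $k(y)>K$ is equivalent to $S_K:=\sum_{j=1}^K d_{i(j)}<K+y$. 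A key observation is that $(i(1),i(2),\ldots)$ is a size-biased random permutation of the non-leaves: since $\sequence{v}$ is uniform on $\cS_\dseq$ one has $\p(i(1)=j)=d_j/(n-1)$, and so on inductively.

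I plan to couple this size-biased permutation to independent exponentials by assigning i.i.d.\ $\mathrm{Exp}(1)$ clocks $T_s$ to the $n-1$ slots and setting $U_i=\min\{T_s:s\text{ is a slot at }i\}$; then $U_i\sim\mathrm{Exp}(d_i)$ are independent and $(i(j))_j$ equals the order statistics of $(U_i)$ in distribution. Fixing a cutoff $t^*>0$ of order $K/(n-1)$ and letting $M=\{i:U_i\le t^*\}$, the event $\{|M|<K\}$ forces $M\subseteq\{i(1),\ldots,i(K)\}$, and hence $S_K\ge\sum_{i\in M}d_i$. This gives the union bound
\[
\p(S_K<K+y)\;\le\;\p(|M|\ge K)\;+\;\p\Big(\sum_{i\in M}d_i<K+y\Big),
\]
and both terms are tail probabilities for sums of independent random variables, amenable to Chernoff-type concentration.

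For the first term, with $t^*\approx K/(2(n-1))$ one has $\e |M|=\sum_i(1-e^{-d_it^*})\le(n-1)t^*\le K/2$, and a multiplicative Chernoff upper tail yields the $\exp(-3bn^{1/2}/(32\sigma))=\exp(-3K/16)$ bound. For the second, I intend to use the elementary inequality $1-e^{-x}\ge\tfrac{1-e^{-2}}{2}\min(x,2)$, which is sharp at $x=0$ and at $x=2$, to lower-bound $\e\sum_{i\in M}d_i\ge\tfrac{1-e^{-2}}{2}\sum_i d_i\min(d_it^*,2)$, and then split the sum according to whether $d_it^*\le 2$ or not. In the ``linear'' regime the contribution is $\approx t^*\sum d_i^2\approx K(\sigma^2+1)/2$, and standard Chernoff concentration for bounded summands applies; in the ``saturated'' regime the indicator $\I{U_i\le t^*}$ is uniformly close to $1$, so the contribution of each such vertex concentrates on $d_i$ itself even when $d_i$ is very large. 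Combining the two regimes produces the $\exp(-\alpha b/2)$ term, with the specific value $\alpha=(1-e^{-2})/24$ emerging from the optimization of the split and the factor $\tfrac{1-e^{-2}}{2}$.

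The main difficulty is the second term, which must hold uniformly over degree sequences --- in particular when $d_{\max}$ is of order $n$, in which case individual summands $d_i\I{U_i\le t^*}$ are large and a naive multiplicative-Chernoff lower-tail estimate is insufficient. The split at $d_it^*=2$ is precisely what handles this: it isolates the high-variance ``saturated'' contributions, where concentration comes from the indicator rather than from the magnitude of $d_i$, from the low-variance ``linear'' ones. Carefully tracking the constants through both sub-bounds, and balancing the cutoff $t^*$ against the two tail estimates, is what produces the exponents $3/32$ and $\alpha/2$ in the final statement.
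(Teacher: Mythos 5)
Your strategy matches the paper's structure closely: the height is bounded by the number $k(y)$ of non-leaves in $\randomtree_{\rho(y)}$, the event $k(y)>K$ translates to the partial sum $\sum_{j\le K}(d_{i(j)}-1)$ falling short of $y$, the size-biased order statistics are coupled to independent $\mathrm{Exp}(d_i)$ clocks, a cutoff $t^*$ produces a union bound $\p(|M|\ge K)+\p(\sum_{i\in M}d_i<K+y)$, and the lower bound on $\e\sum_{i\in M}d_i$ uses exactly the paper's two-regime split via $1-e^{-x}\ge\tfrac{1-e^{-2}}{2}\min(x,2)$. The paper (Lemma~\ref{lem.firststephighvariance}) makes the same moves, with a Bernstein bound for the first term.

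The gap is in the concentration step for the second term. You propose to run a separate Chernoff bound in each regime, claiming that in the saturated regime ($d_it^*>2$) ``the contribution of each such vertex concentrates on $d_i$ itself'' because the indicator is ``uniformly close to~$1$.'' This does not hold: for $d_it^*$ just above $2$, the inclusion probability is only $\ge 1-e^{-2}\approx 0.865$, and the summand $d_i\I{U_i\le t^*}$ has variance of order $d_i^2$, so a ``standard Chernoff for bounded summands'' neither applies nor yields a useful exponent when a single $d_i=\Theta(\sigma n^{1/2})$ dominates (in that case the lower tail is at best $e^{-d_it^*}$, which is merely a constant). The paper does not split the concentration argument at all; instead it applies a single exponential-moment inequality to the \emph{whole} sum, Lemma~\ref{lem:exp_sumbound}, which says that for $S=\sum_i x_i\I{E_i\le t}$ with $E_i\sim\mathrm{Exp}(x_i)$ independent, $\p(S<\e S/2)\le\exp(-t\,\e S/4)$, uniformly in the $x_i$. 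The crux is the precise match between the rate $x_i$ and the weight $x_i$: the exponential moment $\E{e^{-t\overline{X}_i}}$ can be bounded by $e^{t\e X_i/4}$ even when $x_i$ is huge, because $\p(E_i>t)=e^{-x_it}$ decays at exactly the rate needed to offset the large weight. You need to identify and prove an inequality of this form (or cite this lemma) to close the argument; the two-regime split is correct for bounding the expectation but should not be carried into the concentration step.
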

The proof of Proposition~\ref{cor.firststep} appears in Section~\ref{sec:missingproofs}. 
With this proposition in hand, the proof of Theorem~\ref{thm.infinitevariance} proceeds quite similarly to that of Theorem~\ref{thm.gaussiantails} (though it is necessarily somewhat more involved as the choices of the values $y_i$ and $b_i$ for the telescoping sum bound must depend on $\sigma_{\dseq}$).

\begin{proof}[Proof of Theorem~\ref{thm.infinitevariance}]
In the proof we write $n_i=n_i(\dseq)$ and $\sigma=\sigma_{\dseq}$ for succinctness. 
We first assume that $n_1=0$, so $\randomtree$ contains no degree-$1$ vertices and $\sigma=\sigma'_{\dseq}$.  For such degree sequences, we shall only assume that $x\ge 2^{13}$. 
 In the proof of Theorem~\ref{thm.gaussiantails} we showed that for degree sequences with $n_1=0$, for $y \ge 2^9\log 2$, it holds that $\p(\height(\randomtree)\ge yn^{1/2}) < 2e^{-y^2/2^{8}}$; see inequality~(\ref{eq:boundnodeg1largex}). If $x^2 > 2^{11}$ then $(x\log(\sigma+1))^2/(2^8\sigma^2)>8x\log^2(\sigma+1)/\sigma^2$, so for $\sigma  \in (0,e)$ we have

\begin{align*}
\p\left(\height(\randomtree) \ge  xn^{1/2}\tfrac{\log(\sigma+1)}{\sigma}\right)
& < 2\exp\pran{-\frac{ (x\log(\sigma+1))^2}{2^8\sigma^2}}\\
& < 2\exp\pran{-\frac{ 8\log(\sigma+1)}{\sigma^2}(x\log(\sigma+1))} < 2e^{-x\log(\sigma+1)}\, ,
\end{align*}
where for the last bound we use that $\log(1+u)/u^2> 1/8$ for $0<u< e$. This bound is stronger than what we claim in the theorem, so continuing to assume that $n_1=0$, we now turn our attention to the case that $\sigma \ge e$. Like in the proof of Theorem~\ref{thm.gaussiantails}, we begin by using the telescoping sum inequality  (\ref{eq.telescopingsum}), and obtain the bound
\begin{equation}\label{eq:jameswebb} 
\p(\height(\randomtree) \ge  xn^{1/2}\tfrac{\log (\sigma+1)}{\sigma})
\le \sum_{i=0}^N \p\big(\height(\randomtree_{\rho(y_i)})
-
\height(\randomtree_{\rho(y_{i-1})})
 > b_i n^{1/2}\tfrac{\log (\sigma+1)}{\sigma} \big)\, ,
\end{equation}
which holds for any sequence $0 \le y_0 < y_1 < \ldots < y_N=n_0-1$ and positive constants $b_0,\ldots,b_N$ with $\sum_{i=0}^N b_i n^{1/2}\tfrac{\log (\sigma+1)}{\sigma}+1< x n^{1/2}\tfrac{\log (\sigma+1)}{\sigma}$, 
and then controlling the contribution of each summand. 
(We again take $\height(\randomtree_{\rho(y_{-1})}):=0$ for convenience.) 
Also as in the proof of Theorem~\ref{thm.gaussiantails}, we take $b_0=x/2$ and $b_i = x\cdot i/2^{i+2}$, so that 
$\sum_{i\ge 0} b_i = x$ and $\sum_{i=0}^{N+1} b_i < x$ for all $N\in \N$.  We will later pick $N$ for which $b_{N+1}n^{1/2}\ge \sigma/\log(\sigma+1)$, so that $\sum_{i=0}^N b_i n^{1/2}+1 \le \sum_{i=0}^{N+1} b_i n^{1/2} \le xn^{1/2}$ as required.

Now, we take $y_0 = \sigma n^{1/2}(1-e^{-2})/24$, 
so that by Proposition~\ref{cor.firststep} applied with $b=x\log (\sigma+1)=2b_0\log(\sigma+1)>1$ we obtain 
\begin{align}\begin{split}\label{eq:t0_term}
\p\big(\height(\randomtree_{\rho(y_0)} > b_0n^{1/2}\tfrac{\log(\sigma+1)}{\sigma})
&<
\exp\pran{-\tfrac{3}{32}xn^{1/2}\tfrac{\log (\sigma+1)}{\sigma}}
+ \exp\pran{-\tfrac{1-e^{-2}}{192}x\log(\sigma+1)} \\
&\le 2e^{-x\log(\sigma+1)/256}\, ,
\end{split}\end{align}
the last inequality holding since $\sigma \le n^{1/2}$ and $\min(3/32,(1-e^{-2})/192) > 1/256$. 

For the remaining summands, we take $y_i=\min(n_0-1,2^iy_0)$. Provided we have that $b_i n^{1/2}\tfrac{\log(\sigma+1)}{\sigma} \ge 4$, then 
Corollary~\ref{cor.maximalinequality} applied with $b=b_in^{1/2}\tfrac{\log (\sigma+1)}{\sigma}-1$  and $p=(b-1)/2b$ yields 
\begin{align*}
&\p\big(\height(\randomtree_{\rho(y_i)})
-
\height(\randomtree_{\rho(y_{i-1})})
 > b_i n^{1/2}\tfrac{\log(\sigma+1)}{\sigma}\big)\\
&\quad \le \frac{y_i-y_{i-1}}{(1-p)b}\pran{1-\frac{y_{i-1}}{n-1}}^{\lceil pb\rceil}\\
 & \quad\le \frac{2(y_i-y_{i-1})}{b+1}\pran{1-\frac{y_{i-1}}{n-1}}^{(b-1)/2} \\
 &\quad \le \frac{2(y_i-y_{i-1})}{b_i n^{1/2}}\frac{\sigma}{\log(\sigma+1)} \pran{1-\frac{y_{i-1}}{n-1}}^{b_i n^{1/2}(\log (\sigma+1))/(4\sigma)}\, .
\end{align*}
To simplify this upper bound, note that  
\[
\frac{2(y_i-y_{i-1})}{b_in^{1/2}}\frac{\sigma}{\log(\sigma+1)} 
\leq \frac{2^{2i+2}(1-e^{-2})}{24ix}\frac{\sigma^2}{\log(\sigma+1)}
< 
\frac{2^{2i-2}\sigma^2}{ix\log(\sigma+1)}\, ,
\] 
and provided $y_{i-1} < n_0-1$ we also have $1-y_{i-1}/(n-1) \le e^{-2^{i-1}\sigma (1-e^{-2})/(24n^{1/2})}$, so 
\begin{align*}
\pran{1-\frac{y_{i-1}}{n-1}}^{b_i n^{1/2}(\log(\sigma+1))/(4\sigma)}
&\le 
\exp\pran{-\frac{2^{i-1}\sigma (1-e^{-2})}{24 n^{1/2}} \frac{b_i n^{1/2}\log(\sigma+1)}{4\sigma}} \\&< \exp(-ix\log(\sigma+1)/2^{10})\, .
\end{align*}
Combining the three preceding bounds, it follows that when $b_in^{1/2}\tfrac{\log(\sigma+1)}{\sigma} \ge 4$ we have 
\begin{align*}
&  \p\big(\height(\randomtree_{\rho(y_i)})
-
\height(\randomtree_{\rho(y_{i-1})})
 > b_i n^{1/2}\tfrac{\log(\sigma+1)}{\sigma}\big) \\
 & < 
\frac{2^{2i-2}\sigma^2}{ix\log(\sigma+1)}
\exp\pran{-\frac{ix\log(\sigma+1)}{2^9}}\\
& \le \frac{\sigma^2}{4x\log(\sigma+1)} 
\exp\pran{(2\log 2)i -  \frac{ix\log(\sigma+1)}{2^{10}}}\, .
\end{align*}
(If $y_{i-1} \ge n_0-1$ then $\p(\height(\randomtree_{\rho(y_i)})
-
\height(\randomtree_{\rho(y_{i-1})})
 > b_i n^{1/2})=0$ so the bound holds in this case as well.)
By the assumption that $x \ge 2^{13}$ and since $\sigma \ge e$, we have $(x\log(\sigma+1))/2^{10} > 2(2\log 2)$, so this yields 
\[
\p\big(\height(\randomtree_{\rho(y_i)})
-
\height(\randomtree_{\rho(y_{i-1})})
 > b_i \tfrac{\log(\sigma+1)}{\sigma} n^{1/2}\big) 
<
\frac{\sigma^2}{2^{{15}}}\exp\pran{-\frac{ix\log(\sigma+1)}{2^{11}}}\, .
\]

For the above bound to hold we needed that $b_in^{1/2}\tfrac{\log(\sigma+1)}{\sigma} \ge 4$, 
or, equivalently, that $ixn^{1/2}\log( \sigma+1) /(2^{i+4}\sigma) \ge 1$. 
But since $n_0-1 < n$, the condition 
$y_{i-1} < n_0-1$ implies that 
\[
\frac{2^{i-1}\sigma n^{1/2}(1-e^{-2})}{24}< n\, ,
\]
so $ix\log(\sigma+1) n^{1/2}/(2^{i+4}\sigma) \ge 1$ 
provided that $x\log(\sigma+1) > 3\cdot 2^8/(1-e^{-2})$, which holds since $x \ge 2^{13}$ and $\sigma \ge e$. 

We claim that we may use the  bound \eqref{eq:jameswebb} 
with $N=\min(i:2^i\sigma n^{1/2}\tfrac{1-e^{-2}}{24} \ge n_0-1)$. To justify this, we need to show that $b_{N+1}n^{1/2}\log(\sigma+1)/\sigma=x(N+1)2^{-N-3}\log(\sigma+1)n^{1/2}\sigma^{-1}\ge 1$. Note by definition of $N$ that $2^{N-1}\sigma n^{1/2}\tfrac{1-e^{-2}}{24} \le n_0-1\le n$ so that $2^{N-1}\le n^{1/2}\tfrac{24}{1-e^{-2}}\sigma^{-1}$. Therefore, $x(N+1)2^{-N-3}\log(\sigma+1)n^{1/2}\sigma^{-1}\ge x\log(\sigma+1)\tfrac{(1-e^{-2})}{3\cdot 2^7}\ge 1$, the last inequality holding since $x\log(\sigma+1) > 3\cdot 2^8/(1-e^{-2})$, as we previously observed.

It follows that 
\begin{align*}
\sum_{i =1}^N 
&\p\big(\height(\randomtree_{\rho(y_i)})
-
\height(\randomtree_{\rho(y_{i-1})})> b_i n^{1/2}\tfrac{\log(\sigma+1)}{\sigma}\big) \\
& \le 
\frac{\sigma^2}{2^{15}}\exp\pran{-\frac{x\log(\sigma+1)}{2^{11}}}\pran{1-e^{-(x\log(\sigma+1))/2^{11}}}^{-1}\\
&  
\le \frac{1}{2^{14}}\exp\pran{-\frac{x\log(\sigma+1)}{2^{12}}}\, ,
\end{align*}
the last bound holding since $x\log(\sigma+1) \ge x \ge  2^{13}$ and thus 
$\sigma^2 e^{-(x\log(\sigma+1))/2^{11}} < e^{-(x\log(\sigma+1))/2^{12}}$ and $(1-e^{-(x\log(\sigma+1))/2^{11}})^{-1} < 2$.

Using this bound and \eqref{eq:t0_term} in \eqref{eq:jameswebb}, we thus obtain 
\[
\p(\height(\randomtree) > xn^{1/2}\tfrac{\log(\sigma+1)}{\sigma})
\le
3\exp\pran{-\frac{x\log(\sigma+1)}{2^{12}}}\, .
\]
Combining this with the bound from the start of the proof, which handles the case $\sigma < e$, it follows that when there are no vertices of degree one, for all $x \ge 2^{13}$ we have 
\begin{equation}\label{eq:inf_var_nodegone}
\p\big(\height(\randomtree) >  xn^{1/2}\tfrac{\log (\sigma+1)}{\sigma}\big) \le 3\exp\pran{-\frac{x\log(\sigma+1)}{2^{12}}}\, .
\end{equation}
This proves the theorem (in fact, something slightly stronger) when $\randomtree$ has no vertices of degree one. 

\medskip
Now suppose that $\dseq$ contains vertices of degree $1$. Like in the proof of Theorem~\ref{thm.gaussiantails}, we let $\dseq'$ be obtained from $\dseq$ by removing all entries equal to $1$.  Let $\randomtree\in_u\cT_{\dseq}$ and let $\randomtree'\in_u\cT_{\dseq'}$. Write $\sigma=\sigma_{\dseq}$. Also write $n'=n-n_1$ for the number of vertices of $\randomtree'$ so that for $\delta=n'/n$ we have that $\sigma_{\dseq'}^2=\delta^{-1}\sigma^2=(\sigma')^2$.

 We restrict our attention to $x$ such that $x\log(\sigma'+1)\geq 2^{13}\log 4$, since for smaller $x$ our claimed upper bound exceeds $1$, which is an upper bound for any probability. 
	
	We claim that our upper bound for $\sigma'<e$ follows from Theorem \ref{thm.gaussiantails}. Indeed,  for $x\log(\sigma'+1)\geq 2^{13}\log 4$, Theorem \ref{thm.gaussiantails} gives that
	\begin{align*}\p\left(\height(\randomtree)\ge xn^{1/2}\frac{\log(\sigma'+1)}{\sigma}\right)&\le 5\exp(-2^{-12}\delta \sigma^{-2} x^2 \log^{2}(\sigma'+1) )\\
	&\le5\exp(-2\log(4) (\sigma')^{-2} x\log(\sigma'+1))\\
	&\le 5 \exp(-2e^{-2}\log(4)  x\log(\sigma'+1)),\end{align*}
	which is stronger than the bound asserted in Theorem~\ref{thm.infinitevariance}. 
	
	Now assume $\sigma'\ge e$. By Lemma \ref{lem:degree_one_stretch}, for any $h> 0$ and any $y\geq 4h n/n'$, 
	
	\[\p\left(\height(\randomtree)\ge y\right)\leq \p\left(\height(\randomtree')> h\right)+n_0 e^{-yn'/(8n)}\leq  \p\left(\height(\randomtree')> h\right)+n' e^{-yn'/(8n)}.\]
	
	We take 
	\[y=xn^{1/2} \frac{\log(\sigma'+1)}{\sigma} , \qquad h=\frac{y}{4}\frac{n'}{n}=\frac{x}{4} (n')^{1/2}\frac{\log(\sigma'+1)}{\sigma'}.\]
	Then, if $x/4\geq 2^{13}$, \eqref{eq:inf_var_nodegone} gives that 
	
	\[
	\p\left(\height(\randomtree')> h \right)\leq 3\exp\left(-x\log(\sigma'+1)/2^{14}\right).
	\]
	It remains to show that 
		\[
		n' e^{-yn'/(8n)}=n'\exp\left(-x (n')^{1/2}\log(\sigma'+1)/(8\sigma')\right)< \exp\left(-x\log(\sigma'+1)/2^{14}\right).
		\]
		By taking logarithms and rearranging we see that this is equivalent to 
	\[\frac{16 \log (n')^{1/2}}{(n')^{1/2}} < x\frac{\log(\sigma'+1)}{\sigma'}(1-2^{-{11}}\sigma'(n')^{-1/2}).\]
	However, $(n')^{1/2}>\sigma'$, so the last factor is at least $(1-2^{-{11}})>1/2$, and we assumed $x\ge 2^{15}$ so it is sufficient to show that
	\[\frac{ \log (n')^{1/2}}{(n')^{1/2}} < 2^{10} \frac{\log(\sigma'+1)}{\sigma'}.\]
	But $(n')^{1/2}>\sigma'\ge e$, so this inequality indeed holds.  
\end{proof}

\subsection{\bf The postponed proofs}\label{sec:missingproofs}
This section contains the proofs of the results that were stated without proof earlier in Section~\ref{sec:degseq_proofs}: namely, Lemmas~\ref{lem:geometric_attachment} and~\ref{lem:degree_one_stretch} and Proposition~\ref{cor.firststep}. 
\begin{proof}[Proof of Lemma~\ref{lem:geometric_attachment}]
It is useful to assume $\dseq$ is compressed (and in particular that $d_n = 0$). 
Write $\pnt^1_{\tree}(u)=\pnt_{\tree}(u)$ for the parent of vertex $u$ in tree $\tree$; we define the parent of the root to be the root itself. Inductively set $\pnt^{b+1}_\tree(u)=\pnt_{\tree}(\pnt^{b}_{\tree}(u))$ for $b \ge 1$. 
Throughout the proof, we take $\randomtree \in_u \cT_{\dseq}$ and let $\sequence{V}=\sequence{v}(\randomtree)$, so $\sequence{V} \in_u \cS_{\dseq}$.

Fix any vector $(i(1),\ldots,i(j))$ of distinct elements of $[m]$ with $\sum_{k=1}^j (d_{i(k)}-1) \ge y$, 
and let $\cS_{\dseq}(i(1),\ldots,i(j))$ be the set of vectors $\sequence{v} \in \cS_{\dseq}$ such that, writing $\tree=\tree(\sequence{v})$, 
\[
(i_\tree(1),\ldots,i_\tree(j))=(i(1),\ldots,i(j)). 
\]
Let $r \in [j]$ be such that 
\[
\sum_{k=1}^{r-1} (d_{i(k)}-1) < y \le \sum_{k=1}^r (d_{i(k)}-1),
\]
Note that if $\sequence{v} \in \cS_{\dseq}(i(1),\ldots,i(j))$ and $\tree=\tree(\sequence{v})$ then $v_{\rho_\tree(y)}=i(r)=i_\tree(r)$. 
This allows us to rewrite 
\begin{align*}
& \probC{
\dist(v_{\rho(y)},\randomtree_{\rho(x)})>k
}
{
(i_{\randomtree}(1),\ldots,i_{\randomtree}(j))=(i(1),\ldots,i(j))} \\
& 
=
\probC{
\pnt_{\randomtree}^k(i(r)) \not\in \randomtree_{\rho_{\randomtree}(x)}}
{
(i_{\randomtree}(1),\ldots,i_{\randomtree}(j))=(i(1),\ldots,i(j))
}\, .
\end{align*}
For 
$\sequence{V} \in_u \cS_{\dseq}$, the ordering of the integers $\{1,\ldots,n\}$ by their first appearance in $\sequence{V}$ is degree-biased, so 
\[
\p{(i_{\randomtree}(1),\ldots,i_{\randomtree}(j))=(i(1),\ldots,i(j))}
=
\prod_{k=1}^j \frac{d_{i(k)}}{n-1-d_{i(1)}-\ldots-d_{i(k-1)}}\, ,
\]
or equivalently 
\[
|\cS_{\dseq}(i(1),\ldots,i(j))| = {n-1 \choose d_1,\ldots,d_n} \cdot 
\prod_{k=1}^j \frac{d_{i(k)}}{n-1-d_{i(1)}-\ldots-d_{i(k-1)}}.
\]

Fix $\ell < r$ and let $\dseq^\ell=(d_1^\ell,\ldots,d_{n-1}^\ell)$ where $d_{i(\ell)}^\ell=d_{i(\ell)}-1$ and $d^\ell_i=d_i$ for all $i \in [n-1]\setminus\{i(\ell)\}$. Since $d_n=0$ we have $\sum_{i \in [n-1]}d_i^\ell=n-2$, so $\dseq^\ell$ is another degree sequence. 
Now consider the subset $\cS^\ell_{\dseq}(i(1),\ldots,i(j))$ of $\cS_{\dseq}(i(1),\ldots,i(j))$ consisting of those vectors $\sequence{v}$ where the first instance of $i(r)$ in $\sequence{v}$ is the immediate successor of some instance of $i(\ell)$ other than the first. 
The set $\cS^\ell_{\dseq}(i(1),\ldots,i(j))$ is in bijection with the set of vectors $\sequence{v}' \in \cS_{\dseq^\ell}(i(1),\ldots,i(j))$, i.e., the set of vectors $\sequence{v}' \in \cS_{\dseq^\ell}$ such that 
\[
(i(1,\sequence{v}'),\ldots,i(j,\sequence{v}'))=(i(1),\ldots,i(j)). 
\]
To see this, fix $\sequence{v} \in \cS^\ell_{\dseq}(i(1),\ldots,i(j))$, and let $\sequence{v}'$ be the vector obtained from $\sequence{v}$ by deleting the entry with value $i(\ell)$ immediately preceding the first instance of $i(r)$. Then $(i(1,\sequence{v}),\ldots,i(j,\sequence{v}))=(i(1,\sequence{v}'),\ldots,i(j,\sequence{v}'))$, since the deleted entry was not the first instance of $i(\ell)$ in $\sequence{v}$, so $\sequence{v}'$ is an element of $\cS_{\dseq^\ell}(i(1),\ldots,i(j))$. To recover $\sequence{v}$ from $\sequence{v}'$, simply insert an entry with value $i(\ell)$ immediately before the first instance of $i(r)$ in $\sequence{v}'$. 

The same computation as for the size of $\cS_{\dseq}(i(1),\ldots,i(j))$ now yields the formula 
\[
|\cS^\ell_{\dseq}(i(1),\ldots,i(j))| = |\cS_{\dseq^\ell}(i(1),\ldots,i(j))|
=\! {n-2 \choose d^\ell_1,\ldots,d^\ell_n}
\prod_{k=1}^j \frac{d^\ell_{i(k)}}{n-2-d^\ell_{i(1)}\!-\ldots-d^\ell_{i(k-1)}}.
\]

Writing $E(r,\ell)$ for the event that the first instance of $i(r,\sequence{V})$ in $\sequence{V}$ is an immediate successor of some instance of $i(\ell,\sequence{V})$ other than the first, it follows that 
\begin{align*}
& \probC{E(r,\ell)}{(i_{\randomtree}(1),\ldots,i_{\randomtree}(j))=(i(1),\ldots,i(j))}\\
& = \frac{|\cS^\ell_{\dseq}(i(1),\ldots,i(j))|}{|\cS_{\dseq}(i(1),\ldots,i(j))|}\\
& = \frac{d_{i(\ell)}}{n-1} 
\cdot\prod_{k=1}^j \frac{d^\ell_{i(k)}}{d_{i(k)}} 
\cdot\prod_{k=1}^j\frac{n-1-d_{i(1)}-\ldots-d_{i(k-1)}}{n-2-d^\ell_{i(1)}-\ldots-d^\ell_{i(k-1)}}\\
& = \frac{d_{i(\ell)}-1}{n-1}\cdot \prod_{k=1}^j\frac{n-1-d_{i(1)}-\ldots-d_{i(k-1)}}{n-2-d^\ell_{i(1)}-\ldots-d^\ell_{i(k-1)}} \\
& > \frac{d_{i(\ell)}-1}{n-1}\, .
\end{align*}

Now note that if $E(r,\ell)$ occurs then $i_{\randomtree}(\ell)$ is the parent of $i_{\randomtree}(r)$ in $\randomtree=\tree(\sequence{V})$. Moreover,  
letting $q \in [j]$ be such that 
\[
\sum_{k=1}^{q-1} (d_{i(k)}-1)< x \le \sum_{k=1}^{q} (d_{i(k)}-1)\, ,
\]
then on the event $\{(i_{\randomtree}(1),\ldots,i_{\randomtree}(j))=(i(1),\ldots,i(j))\}$, the tree $\randomtree_{\rho(x)}$ has vertices $i(1),\ldots,i(q)$. It follows that
\begin{align*}
& \probC{\pnt_{\randomtree}(v_{\rho_\randomtree(y)}) \in \tree_{\rho(x)}(\sequence{V})}{(i_{\randomtree}(1),\ldots,i_{\randomtree}(j))=(i(1),\ldots,i(j))}\\
&= \probC{\pnt_{\randomtree}(i_\randomtree(r)) \in \tree_{\rho(x)}(\sequence{V})}{(i_{\randomtree}(1),\ldots,i_{\randomtree}(j))=(i(1),\ldots,i(j))}\\
& \ge
\sum_{\ell=1}^q 
\probC{E(r,\ell)}{(i_{\randomtree}(1),\ldots,i_{\randomtree}(j))=(i(1),\ldots,i(j)} \\
& > \sum_{\ell=1}^q \frac{d_{i(\ell)}-1}{n-1} \ge \frac{x}{n-1}\, .
\end{align*}
This proves the case $b=1$ of the lemma by summing over $(i_{\randomtree}(1),\ldots,i_{\randomtree}(j))$. In fact, we have proved something slightly stronger: in the case $b=1$, the necessary bound holds even conditionally given the values of $(i_{\randomtree}(1),\ldots,i_{\randomtree}(j))$. From this we will prove the full lemma by arguing inductively, by conditioning on the label of the parent of $i_\randomtree(r)$. 

There are two cases to consider. In the first case, the first instance of $i_\randomtree(r)$ in $\sequence{V}$ is the immediate successor of a repeat and in the second case the first instance of $i_\randomtree(r)$ is the immediate successor of the first instance of $i_\randomtree(r-1)$.  In either case, we continue to take $\randomtree \in_u \cT_{\dseq}$ and $\sequence{V}=\sequence{v}(\randomtree)$. 
We now fix a sequence $(i(1),\ldots,i(j))$ of distinct elements of $[m]$ and consider the two cases in turn.

For the first case, fix $\ell \in \{q+1,\ldots,r-1\}$ and suppose that $(i_{\randomtree}(1),\ldots,i_{\randomtree}(j))=(i(1),\ldots,i(j))$ and that $E(r,\ell)$ occurs. Under this conditioning, $\sequence{V} \in_u \cS^\ell_{\dseq}(i(1),\ldots,i(j))$, and in particular the parent of $i(r)=i_\randomtree(r)$ in $\randomtree$ is $i(\ell)$. 
Now let $\sequence{V}'$ be obtained from $\sequence{V}$ by deleting the entry immediately preceding the first instance of $i(r)$ in $\sequence{V}$ and let $\randomtree'=\tree(\sequence{V}')$. 
Under this conditioning, $\sequence{V}' \in_u \cS_{\dseq^\ell}(i(1),\ldots,i(j))$, which in particular implies that $(i_{\randomtree'}(1),\ldots,i_{\randomtree'}(j))=(i_{\randomtree}(1),\ldots,i_{\randomtree}(j))$. Moreover, the sequences $\sequence{V}$ and $\sequence{V}'$ agree until after the first instance of $i(\ell)$, so the ancestral line of $i(\ell)$ is the same in both $\randomtree$ and $\randomtree'$. Since $q < \ell$, it likewise
holds that $\rho_{\randomtree}(x)=\rho_{\randomtree'}(x)$ and, writing $\rho(x)$ for their common value, that $\randomtree_{\rho(x)}=\randomtree'_{\rho(x)}$. Now, let $W \in_u \cS_{\dseq^{\ell}}$, where $\dseq^\ell$ is as defined above. It follows that 
\begin{align}
& \probC{\pnt_{\randomtree}^{b}(i(r)) \not\in \randomtree_{\rho(x)}}
{(i_\randomtree(1),\ldots,i_\randomtree(j))=(i(1),\ldots,i(j)),E(r,\ell)} \nonumber\\
& 
= 
\probC{\pnt_{\randomtree'}^{b-1}(i(\ell)) \not\in \randomtree'
_{\rho(x)}}
{(i_\randomtree(1),\ldots,i_\randomtree(j))=(i(1),\ldots,i(j)),E(r,\ell)} \nonumber\\
& 
= 
\probC{\pnt^{b-1}_{\tree(\sequence{W})}(i(\ell)) \not\in \tree(\sequence{W})_{\rho(x)}}
{(i_{\tree(\sequence{W})}(1),\ldots,i_{\tree(\sequence{W})}(j))=(i(1),\ldots,i(j))}\, \nonumber\\
& \le \pran{1-\frac{x}{n-2}}^{b-1}\, . \label{eq:jump_induction}
\end{align} 
The second equality holds since under the second conditioning 
$\sequence{V}' \in_u \cS_{\dseq^\ell}(i(1),\ldots,i(j))$ and under the third conditioning 
$\sequence{W} \in_u \cS_{\dseq^\ell}(i(1),\ldots,i(j))$ and $\rho_{\tree(\sequence{W})}(x)=\rho_{\randomtree}(x)$.
The last inequality holds by induction (on $b$ or $n$ or both) and since $\dseq^\ell$ has length $n-1$.

For the second case, 
consider the subset $\cS^*_{\dseq}$ of $\cS_{\dseq}(i(1),\ldots,i(j))$ consisting of those vectors $\sequence{v}$ where the first instance of $i(r)$ in $\sequence{v}$ is the immediate successor of the first instance of $i(r-1)$. 
Let $\dseq^*=(d^*_1,\ldots,d^*_{n-1})$, where 
\[
d^*_{i(k)} = 
\begin{cases}
	d_{i(k)}	& \mbox{ if }k < r-1 \\
	d_{i(r-1)}+d_{i(r)}-1 & \mbox{ if }k=r-1\\
	d_{i(k+1)} & \mbox{ if } r \le k < n\, ,
  \end{cases}
\]
and let $(i^*(1),\ldots,i^*(j-1))=(i(1),\ldots,i(r-1),i(r+1),\ldots,i(j-1))$. 
From a vector $\sequence{v} \in \cS^*_{\dseq}$, we 
can obtain a vector $\sequence{v}' \in \cS_{\dseq^*}(i^*(1),\ldots,i^*(j-1))$ by deleting the first instance of $i(r)$, then changing all other instances of $i(r)$ to $i(r-1)$. Conversely, given a vector $\sequence{v}' \in \cS_{\dseq^*}(i^*(1),\ldots,i^*(j-1))$, there are ${d_{i(r-1)}+d_{i(r)}-2 \choose d_{i(r)}-1}$ 
ways to reconstruct a vector $\sequence{v} \in \cS^*_{\dseq}$: first insert an entry with value $i(r)$ just after the first instance of $i(r-1)$, then replace $d_{i(r)}-1$ of the other instances of $i(r-1)$ by $i(r)$'s. It follows that for $\sequence{V} \in_u \cS_{\dseq}$ as above, conditionally given that $\sequence{V} \in \cS_{\dseq}^*$, then $\sequence{V}' \in_u \cS_{\dseq^*}(i^*(1),\ldots,i^*(j-1))$. Moreover, the ancestral lines of $i(r-1)$ are the same in both $\tree(\sequence{V})$ and 
$\tree(\sequence{V}')$, and 
if $\pnt_{\tree(\sequence{V})}(i(r)) \not\in \tree_{\rho(x)}(\sequence{V})$ then we must have $q < r-1$, so also $\rho_{\tree(\sequence{V})}(x)=\rho_{\tree(\sequence{V}')}(x)=:\rho(x)$, and 
$\tree_{\rho(x)}(\sequence{V})=\tree_{\rho(x)}(\sequence{V}')$. 

Let $F(r)$ be the event that the first instance of $i_\randomtree(r)$ is an immediate successor of the first instance of $i_\randomtree(r-1)$, 
and let $W \in_u \cS_{\dseq^*}$. 
By the conclusions of the preceding paragraph, it follows that 
\begin{align}
& \probC{\pnt_{\randomtree}^b(i(r)) \not\in \randomtree_{\rho(x)}}
{(i_\randomtree(1),\ldots,i_\randomtree(j))=(i(1),\ldots,i(j)),F(r)} \nonumber\\
& 
= 
\probC{\pnt_{\tree(\sequence{V}')}^{b-1}(i(r-1)) \not\in \tree(\sequence{V}')_{\rho(x)}}
{(i_{\tree(\sequence{V}')}(1),\ldots,i_{\tree(\sequence{V}')}(j))=(i(1),\ldots,i(j)),F(r)} \nonumber\\
& 
= 
\mathbf{P}\left(\pnt^{b-1}_{\tree(\sequence{W})}(i^*(r-1)) \not\in \tree(\sequence{W})_{\rho(x)} \big|
(i_{\tree(\sequence{W})}(1),\ldots,i_{\tree(\sequence{W})}(j-1)=(i^*(1),\ldots,i^*(j-1))\right)\nonumber\\
& \le \pran{1-\frac{x}{n-2}}^{b-1}\, ,\label{eq:nbr_induction}
\end{align}
the second equality holding since $i^*(r-1)=i(r-1)$ and $\rho_{\tree(\sequence{W})}(x)=\rho(x)$, and the final bound again holding by induction. 

For $V \in_u \cS_{\dseq}$, on the event that $(i(1,\sequence{V}),\ldots,i(j,\sequence{V}))=(i(1),\ldots,i(j))$, if we have $\pnt_{\tree(\sequence{V})}(i(r,\sequence{V})) \not\in \tree_{\rho(x)}(\sequence{V})$ 
then either $F(r)$ occurs or else $E(r,\ell)$ occurs for some $\ell \in \{q+1,\ldots,r-1\}$, so \eqref{eq:jump_induction} and \eqref{eq:nbr_induction} together imply that 
\begin{align*}
& \probC{\pnt_{\randomtree}^b(v_{\rho_\randomtree(y)})
\not\in \randomtree_{\rho(x)}}
{(i_\randomtree(1),\ldots,i_\randomtree(j))=(i(1),\ldots,i(j)),
\pnt_{\randomtree}(i(r)) \not\in \randomtree_{\rho(x)}}\\
&=\probC{\pnt_{\randomtree}^b(i(r)) \not\in \randomtree_{\rho(x)}}
{(i_\randomtree(1),\ldots,i_\randomtree(j))=(i(1),\ldots,i(j)),
\pnt_{\randomtree}(i(r)) \not\in \randomtree_{\rho(x)}}\\
& \le 
\pran{1-\frac{x}{n-2}}^{b-1}\, .
\end{align*}
Summing over possible values for $(i_\randomtree(1),\ldots,i_\randomtree(j))$ yields that 
\[
\probC{\pnt_{\randomtree}^b(v_{\rho_\randomtree(y)})
\not\in \randomtree_{\rho(x)}}{\pnt_{\randomtree}(i(r)) \not\in \randomtree_{\rho(x)}}\le 
\pran{1-\frac{x}{n-2}}^{b-1}\, ,
\]
which combined with the bound for the case $b=1$ completes the proof. \end{proof}

\begin{proof}[Proof of Lemma~\ref{lem:degree_one_stretch}]
	The bound is obvious for $n_1=n-1$ so we assume $n_1 < n-1$. In that case, $\randomtree'$ has height at least $1$ deterministically, so for $0< h<1$ the first term in the upper bound equals $1$ and that is an upper bound for any probability. Therefore, we will from here on assume that $h\geq 1$. We begin with some fairly elementary combinatorics. Given a set $S=\{s_1,\ldots,s_m\}$, by a {\em labeled composition of $S$ with $k$ parts} we mean a weak composition $m_1,\ldots,m_k$ of $m$ together with a permutation $(s_{\sigma(1)},\ldots,s_{\sigma(m)})$ of $S$. 
	There are $(m+k-1)!/(k-1)!$ labeled compositions of $S$ with $k$ parts. 
	
	We now return to the setting of the lemma. By permuting the entries of $\dseq$, we may assume that the $n_1$ entries which equal $1$ appear at the end; this does not affect the law of the height of $\randomtree$.  In this case, we have $\dseq'=(d_1,\ldots,d_{n-n_1})$. 
	
	Next, given a tree $\tree\in \cT_{\dseq}$, let $\tree^- \in \cT_{\dseq'}$ be obtained from $\tree$ by suppressing all degree-one vertices. More precisely, $\tree^-$ is formed from $\tree$ by replacing any path $\gamma$ in $\tree$ all of whose internal vertices have exactly one child by a single edge. If this results in the root having degree $1$, we remove the root and its adjacent edge and let its only child be the new root; see Figure~\ref{fig:degone}. 
	
	\begin{figure}[htb]
		\begin{centering}
			\includegraphics[width=0.5\textwidth]{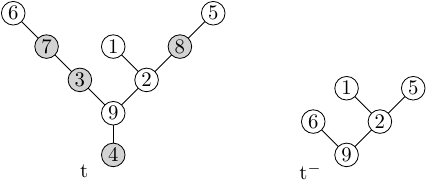}
		\end{centering}
		\caption{Left: a tree $\tree$. Right: the tree $\tree^-$ obtained from $\tree$ by suppressing degree-one vertices. Considering the vertices in the order $(1,2,5,6,9)$, the corresponding labeled composition of the set $\{3,4,7,8\}$ of degree-one vertices is $(),(),(8),(7,3),(4)$.}
		\label{fig:degone}
	\end{figure}
	
	For each tree $\tree' \in \cT_{\dseq'}$, there are 
	\[
	\frac{(n-1)!}{(n-n_1-1)!} 
	\]
	trees $\tree \in \cT_{\dseq}$ with $\tree^-=\tree'$.   This is the case since each such tree $\tree$ is uniquely determined by $\tree'$ together with a labeled composition of the $n_1$ degree-one vertices $\{n-n_1+1,\ldots,n\}$ into $n-n_1$ parts, as follows (see Figure~\ref{fig:degone}). Fix an ordering of the vertices of $\tree'$ as $v_1,\ldots,v_{n-n_1}$. Then for each $i \in [n-n_1]$, assign the vertices of the $i$'th part of the composition to $v_i$'s ancestral edge, in the same order they appear in the composition. (When $v_i$ is the root, these vertices are simply attached as ancestors of $v_i$.)
	
	It follows that if $\randomtree \in_u \cT_{\dseq}$ then $\randomtree^- \in_u \cT_{\dseq'}$. Moreover, 
	given that $\randomtree^-=\tree'$, the conditional distribution of $\randomtree$ may be described as follows. Choose any fixed order of the vertices of $\tree'$. Then choose a uniformly random labeled composition of $\{n-n_1+1,\ldots,n\}$ with $n-n_1$ parts, and then assign degree-one vertices to the ancestral edges of vertices of $\tree'$ as specified by the composition, as above. 
	
	For any {\em root-to-leaf path} $U=(u_1,\ldots,u_m)$ in $\tree'$, it follows from the above construction that conditionally given $\randomtree^-=\tree'$, 
	the total number of vertices lying along the corresponding root-to-leaf path in $\randomtree$ has the same distribution as the random variable $X_m$ described in the following experiment. 
	 Consider an urn with $n-1-n_1$ white balls and $n_1$ black balls. Repeatedly sample without replacement from the urn until it is empty. For $0\le i \le n-1$ write $W_i$ for the total number of white balls drawn after the $i$'th sample, and set $W_{n}=n-n_1$. Then let $X_m=\min(i: W_i=m)$. For $0 \le i \le n-1$ let $W_i^*$ be Binomial$(i,(n-1-n_1)/(n-1))$; so $W_i^*$ can be thought of as the number of white balls drawn after $i$ samples {\em with} replacement from the urn of the previous paragraph. By \cite[Proposition~20.6]{MR883646}, $W_i^*$ is a dilation of $W_i$, which is to say that there is a coupling $(W,W^*)$ of $W_i$ and $W_i^*$ so that $\e(W^*|W)=W$. This implies that $\e(\phi(W)) \le \e(\phi(W^*))$ for all convex functions of $\phi:\R \to \R$. In particular, concentration inequalities for $W_i^*$ which are proved by bounding exponential moments $\e(e^{\lambda W})$ of $W$ apply without change to $W_i$. By a Chernoff bound (see, e.g., \cite[Theorem 2.3c]{mcdiarmid98concentration}, applied with $\epsilon=1/2$) it follows that 
	that for $x \ge 2$, 
	\begin{align*}
		\p(X_m \ge  x mn/(n-n_1)) 
		& \le \p(W_{\lceil x mn/(n-n_1) \rceil} \le m) \\
		& \le \p(W_{\lceil x mn/(n-n_1) \rceil} \le \e W_{\lceil x mn/(n-n_1) \rceil}/2)\\
		& \le e^{-\tfrac{1}{8} xm}
	\end{align*}
This bound implies that for 
	$y \ge 2mn/(n-n_1)$, 
	\[
	\p(X_m \ge  y) \le e^{-y (n-n_1)/(8n)}\, .
	\]
Write $r'$ for the root of $\tree'$ and $r$ for the root of $\randomtree$. 
	Since the number of vertices on a fixed root-to-leaf path is one larger than the distance of that leaf from the root, it follows that for any leaf $v$ of $\tree'$, 
	for any $y \ge 2(\dist_{\tree'}(v,r')+1)n/(n-n_1)$,	
	\[
	\probC{\dist_{\randomtree}(v,r)\ge  y}{\randomtree^-=\tree'}
	\le e^{-(y+1)(n-n_1)/(8n)}.
	\]
If $\tree'$ has height at most $ h $ then since $h \ge 1$ we always have $2(\dist_{\tree'}(v,r')+1)n/(n-n_1) \le 4hn/(n-n_1)$, so if $\tree'$ has $n_0$ leaves and height at most $ h $ then, applying the above inequality to all root-to-leaf paths in $\tree'$, a union bound yields that for any $y \geq 4hn/(n-n_1)$, 
		\[
		\p(\height(\randomtree') \ge  y~|~\randomtree^-=\tree') \le n_0 e^{-(y+1)(n-n_1)/(8n)}< n_0 e^{-y(n-n_1)/(8n)}\, .
		\]
		Finally, since $\randomtree^- \eqdist \randomtree' \in_u\cT_{\dseq'}$, it follows that 
		\begin{align*}
			\p(\height(\randomtree)\ge y) 
			& \le  
			\p(\height(\randomtree^-) >  h) 
			+ \sum_{\{\tree' \in \cT_{\dseq'}: \height(\tree') \leq  h \}} \p(\height(\randomtree) \ge  y~|~
			\randomtree^-=\tree')\p(\randomtree^-=\tree')\\
			& \le \p(\height(\randomtree') >  h) + n_0 e^{-y(n-n_1)/(8n)}\, . \qquad\qquad\qquad\qquad\qquad\qquad\qquad\qedhere
		\end{align*}
\end{proof}

The proof of Proposition~\ref{cor.firststep} requires an auxiliary lemma, which itself has an auxiliary lemma.\footnote{``So, Nat'ralists observe, a Flea/Hath smaller Fleas that on him prey/And these have smaller yet to bite 'em/And so proceed {\em ad infinitum}.'' From {\em On Poetry: A Rapsody}, Johnathan Swift, 1733.} We first state and prove these two lemmas, then proceed to the proof of Proposition~\ref{cor.firststep}. 

The auxiliary auxiliary lemma, which is proved by a Chernoff bound-type argument, is similar to \cite[Lemma~15]{blanc}. 
\begin{lem}\label{lem:exp_sumbound}
	Fix positive constants $x_1,\ldots,x_m$ and 
	let $E_1,\ldots,E_m$ be independent with $E_m\sim \mathrm{Exp}(x_i)$. Fix $t > 0$ and write $S=x_1\I{E_1\le t}+\ldots+x_m\I{E_m \le t}$. Then 
	\[
	\p(S \le \e S/2) \le \exp(-t\e S/4). 
	\]
\end{lem}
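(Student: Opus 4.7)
The plan is to apply a Chernoff bound with tilting parameter $\lambda=t$. Writing $X_i:=x_i\I{E_i\leq t}$, each $X_i$ takes value $x_i$ with probability $p_i:=1-e^{-x_it}$ and $0$ otherwise, so $\e S=\sum_i x_ip_i$. First I would apply Markov's inequality to $e^{-\lambda S}$ for any $\lambda>0$, obtaining
\[
\p\!\left(S<\tfrac{\e S}{2}\right)\leq e^{\lambda\e S/2}\prod_{i=1}^m\e[e^{-\lambda X_i}] = e^{\lambda \e S/2}\prod_{i=1}^m\bigl(1-p_i(1-e^{-\lambda x_i})\bigr).
\]
Then I would set $\lambda=t$, which is the natural choice because then $1-e^{-\lambda x_i}=p_i$, and the bound collapses to $\exp\bigl(t\e S/2+\sum_i\log(1-p_i^2)\bigr)$.

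Using $1-p_i=e^{-tx_i}$, one has $\log(1-p_i^2)=-tx_i+\log(1+p_i)$, so the required bound $\p(S<\e S/2)\leq e^{-t\e S/4}$ will follow from the per-index inequality
\[
\log(1+p_i)\leq tx_i\bigl(1-\tfrac{3p_i}{4}\bigr),\qquad i=1,\ldots,m,
\]
since summing then yields $\sum_i\log(1-p_i^2)\leq-\tfrac{3t}{4}\sum_i x_ip_i=-\tfrac{3t\e S}{4}$, and the Chernoff expression becomes $\exp(t\e S/2-3t\e S/4)=e^{-t\e S/4}$.

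\textbf{The main obstacle} is verifying this per-index inequality. Substituting $v=e^{-tx_i}\in(0,1]$, so that $p_i=1-v$ and $tx_i=-\log v$, it is equivalent to
\[
h(v):=4\log(2-v)+(1+3v)\log v\leq 0 \qquad \text{for all } v\in(0,1],
\]
with equality exactly at $v=1$. I would verify this calculus fact by noting that $h(v)\to-\infty$ as $v\to 0^+$ and $h(1)=0$, and then showing that $h$ is strictly increasing on $(0,1)$. Multiplying $h'(v)=-4/(2-v)+3\log v+1/v+3$ through by $v(2-v)>0$ reduces the latter to showing $\psi(v):=3v(2-v)\log v+2+v-3v^2>0$ on $(0,1)$, a short exercise using $\psi(0^+)=2$, $\psi(1)=0$, $\psi'(1)=-2$, and a direct bound on $\psi$ at its interior critical points (which lie in intervals like $(0.35,0.40)$ and $(0.70,0.75)$ and at which $\psi$ is clearly bounded away from $0$).
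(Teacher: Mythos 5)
Your proof is correct and follows essentially the same route as the paper's: a Chernoff bound with tilting parameter $\lambda=t$, reduced to a single-variable calculus inequality in $v=e^{-tx_i}$. The paper dismisses the key estimate $\E{e^{-t\overline{X}_i}}\leq e^{t\e X_i/4}$ as ``a delicate but elementary computation''; your per-index inequality $\log(1+p_i)\leq tx_i\bigl(1-\tfrac{3p_i}{4}\bigr)$, equivalent to $h(v)\le 0$, is exactly that computation made explicit, and your outline of the calculus (monotonicity of $h$ via positivity of $\psi$) checks out.
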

\begin{proof} 
	For readability we take $t=1$, and explain how to handle general $t>0$ at the end of the proof. 
	Let $X_i=x_i \I{E_i \le 1}$, so $\e X_i = x_i(1-e^{-x_i})$ and 
	\[
	\overline{X}_i := X_i-\e X_i = 
	x_ie^{-x_i}-x_i\I{E_i > 1}\, ,
	\] 
	so that $\overline{S}:=S-\e S = \sum_{i=1}^m \overline{X}_i$. 
	An easy computation gives 
	\[
	\E{e^{-\overline{X}_i}}
	=
	e^{-x_ie^{-x_i}} \E{e^{x_i \I{E_i > 1}}}
	= 
	e^{-x_ie^{-x_i}}(2-e^{-x_i})
	\, ,
	\]
	and a delicate but elementary computation then shows that 
	this quantity is at most $\exp\left(x_i(1-e^{-x_i})/4\right)=\exp(\e X_i/4)$. 
	Markov's inequality then gives 
	\begin{align*}
		\p(S \le \e S/2)
		\le 
		\e e^{-\overline{S}} e^{-\e S/2} 
		= \prod_{i=1}^m
		\e e^{-\overline{X}_i}e^{-\e X_i/2}
		\le \prod_{i=1}^m e^{-\e X_i/4} = e^{-\e S/4}.
	\end{align*}
	This proves the lemma in the case $t=1$. For general $t$, the proof works identically by instead showing that 
	$\e e^{-t \overline{X}_i} \le e^{ t\e X_i/4}$.
\end{proof}
We use Lemma~\ref{lem:exp_sumbound} to prove a lower tail bound for sums of the form $\sum_{1 \le j \le k} (d_{i_{\randomtree}(j)}-1)$. 
\begin{lem}\label{lem.firststephighvariance}
Fix a degree sequence $\dseq=(d_1,\ldots,d_n)$ and let $\randomtree \in_u \cT_{\dseq}$. 
Then 
taking $c=(1-e^{-2})/8$, 
for all $t \in [0,1]$, with $\sigma=\sigma_\dseq=(n^{-1}\sum_{j \in [n]} d_j(d_j-1))^{1/2}$, we have 
\[
\p\Big(
\sum_{1 \le j \le nt} (d_{i_{\randomtree}(j)}-1)
\le \frac{c\sigma^2nt}{1+\sigma n^{1/2}t}
\Big) \le 
\exp(-\tfrac{3}{16}tn)
+
\exp\pran{-
\frac c4\frac{n \sigma^2t^2}{1+\sigma n^{1/2}t}}. 
\]
\end{lem}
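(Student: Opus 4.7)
The plan uses the classical coupling of degree-biased sampling with independent exponentials: letting $E_1,\ldots,E_m$ be independent with $E_i\sim\mathrm{Exp}(d_i)$ for each non-leaf vertex $i$, the permutation $(i_{\randomtree}(1),\ldots,i_{\randomtree}(m))$ has the same distribution as the permutation sorting the $E_i$ in increasing order. Fix $\tau := t/2$ and introduce $N_\tau := |\{i:E_i \le \tau\}|$ and $T_\tau := \sum_{i: E_i\le \tau}(d_i-1)$. Since every non-leaf $i$ has $d_i \ge 1$, the summands in $T_\tau$ are non-negative, so on the event $\{N_\tau \le nt\}$ the first $nt$ indices visited contain every $i$ with $E_i \le \tau$, giving $\sum_{j=1}^{nt}(d_{i_{\randomtree}(j)}-1) \ge T_\tau$. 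Writing $B := c n\sigma^2 t/(1+\sigma n^{1/2}t)$, a union bound reduces the proof to controlling $\p(N_\tau > nt)$ and $\p(T_\tau \le B)$ separately.

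For the first, $N_\tau$ is a sum of independent Bernoullis with $\e N_\tau = \sum_i(1-e^{-d_i\tau}) \le \tau\sum_i d_i \le nt/2$, and Chernoff's upper-tail bound $\p(X \ge \mu + s) \le \exp(-s^2/(2(\mu + s/3)))$ with $s = nt - \e N_\tau \ge nt/2$ yields $\p(N_\tau \ge nt) \le \exp(-3nt/16)$, exactly the first term. For the second, the elementary inequality $1-e^{-x}\ge x/(1+x)$ (equivalent to $e^x \ge 1+x$) combined with Cauchy--Schwarz gives
\[
\e T_\tau \ge \tau\sum_i\frac{d_i(d_i-1)}{1+d_i\tau} \ge \frac{\tau n\sigma^2}{1+\tau d_{\max}},
\]
using $\sum_i d_i^2(d_i-1) \le d_{\max}\sum_i d_i(d_i-1) = d_{\max}n\sigma^2$ in the denominator. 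The crucial estimate $d_{\max} \le 1+\sigma n^{1/2}$ (obtained from $d_{\max}(d_{\max}-1)\le n\sigma^2$), with $\tau = t/2$ and $t \in [0,1]$, then yields $\e T_\tau \ge tn\sigma^2/(3(1+\sigma n^{1/2}t)) \ge 2B$ since $c=(1-e^{-2})/8 < 1/6$. A Chernoff-type concentration bound of the form $\p(T_\tau \le \e T_\tau/2) \le \exp(-\tau\e T_\tau/4)$ applied to $\{T_\tau \le B\} \subseteq \{T_\tau \le \e T_\tau/2\}$ then produces $\p(T_\tau \le B) \le \exp(-\tau B/2) = \exp(-cn\sigma^2 t^2/(4(1+\sigma n^{1/2}t)))$, matching the second term.

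The main obstacle is this final concentration step: Lemma \ref{lem:exp_sumbound} is stated for sums $\sum_i x_i\I{E_i\le t}$ in which the summand values coincide with the exponential rates, whereas $T_\tau$ sums $(d_i-1)\I{E_i\le\tau}$ against rates $d_i$. I expect to handle this either by redoing the exponential-moment calculation underlying Lemma \ref{lem:exp_sumbound} for generic value/rate pairs (using the generic bound $e^{-u}-1+u \le u^2/2$ in place of the specialized numerical inequality exploited there), or by the shortcut $T_\tau \ge U_\tau/2$ with $U_\tau := \sum_{i: d_i \ge 2}d_i\I{E_i\le\tau}$---valid since $d_i-1\ge d_i/2$ when $d_i\ge 2$ and non-leaf indices with $d_i=1$ contribute zero to $T_\tau$---after which Lemma \ref{lem:exp_sumbound} applies to $U_\tau$ directly with $x_i = d_i$, at the cost of slightly retuning the earlier constants to recover the stated $c=(1-e^{-2})/8$.
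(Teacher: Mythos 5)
Your strategy is essentially the paper's proof. Both use the coupling with independent exponentials $E_i\sim\mathrm{Exp}(d_i)$, both set the time cutoff at $t/2$, both decompose the failure event into $\{N_{t/2}\ge nt\}\cup\{X_{t/2}\le B\}$ (with $X_{t/2}$ your $T_\tau$), and both conclude by applying Lemma~\ref{lem:exp_sumbound} to the indicator sum. The differences are cosmetic: you bound $\p(N_\tau\ge nt)$ via the Bernstein-form Chernoff bound where the paper cites Bernstein directly (same constant $3/16$); and to lower-bound $\e T_\tau$ you use $1-e^{-u}\ge u/(1+u)$ together with $d_{\max}\le 1+\sigma n^{1/2}$ (a single clean bound), whereas the paper splits the degree sum at $d_is\le 2$ versus $d_is>2$ and optimizes the threshold $r$, obtaining $\e X_s \ge \tfrac{1-e^{-2}}{2}\tfrac{n\sigma^2 s}{1+\sigma n^{1/2}s}$. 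The arithmetic in your version ($\e T_\tau\ge tn\sigma^2/(3(1+\sigma n^{1/2}t))\ge 2B$ since $c<1/6$) checks out, and is arguably cleaner than the paper's case split. One cosmetic slip: the ``Cauchy--Schwarz'' mention and the stated inequality $\sum_i d_i^2(d_i-1)\le d_{\max}\sum_i d_i(d_i-1)$ are unused; the bound $\sum_i d_i(d_i-1)/(1+d_i\tau)\ge n\sigma^2/(1+d_{\max}\tau)$ follows simply by bounding each denominator.

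On the ``main obstacle'' you flag: you are right that Lemma~\ref{lem:exp_sumbound} as stated pairs the value $x_i$ with the rate $x_i$, whereas $T_\tau=\sum (d_i-1)\I{E_i\le\tau}$ pairs value $d_i-1$ with rate $d_i$, and the paper applies the lemma to $X_s$ without comment on this mismatch. Of your two proposed fixes, note that the generic bound $e^{-u}-1+u\le u^2/2$ does not by itself recover the constant $1/4$ in the exponential-moment estimate for all $z=sy_i$; it works only for small $z$. The route via $U_\tau=\sum_{i:d_i\ge 2}d_i\I{E_i\le\tau}\le 2T_\tau$ is sound but costs a factor of $2$ that is not trivially absorbed, so ``retuning'' is genuinely required there. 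The cleanest repair (which also justifies the paper's own usage) is to observe that the scalar inequality underlying Lemma~\ref{lem:exp_sumbound} extends to any value $y_i\le x_i$: writing $p=1-e^{-sx_i}$ and $z=sy_i$, the function $g(z)=e^{\frac34 zp}(pe^{-z}+1-p)$ satisfies $g(0)=1$, $g'$ changes sign at most once (from negative to positive), and $g(-\log(1-p))=g(sx_i)\le 1$ is exactly the inequality proved in Lemma~\ref{lem:exp_sumbound}; quasi-convexity then gives $g(z)\le 1$ for all $z\in[0,sx_i]$, which covers $y_i=d_i-1\le d_i=x_i$. With that observation your argument goes through verbatim and yields the stated constants.
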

\begin{proof}
We assume $\dseq$ is compressed, so that $m:=|\{i \in [n]: d_i \ne 0\}|=\max(i\in [n]: d_i \ne 0)$ is the number of non-leaf vertices of $\randomtree$. 
Let $E_1,\ldots,E_m$ be independent with $E_i \sim \mathrm{Exp}(d_i)$, and let $(I_1,\ldots,I_m)$ be the permutation of $(1,\dots,m)$ for which $E_{I_1} < \ldots < E_{I_m}$. Then $d_{I_1},\ldots,d_{I_m}$ is a size-biased permutation of $(d_1,\ldots,d_m)$. In other words, $(I_1,\ldots,I_m)$ has the same distribution as $(i_\randomtree(1),\ldots,i_\randomtree(m))$ for $\randomtree \in_u \cT_{\dseq}$. For the remainder of the proof we may therefore assume that $\randomtree$ and $E_1,\ldots,E_m$ are coupled so that 
\[
(I_1,\ldots,I_m) = (i_\randomtree(1),\ldots,i_\randomtree(m)). 
\]

Now let $s=t/2$ and define 
\begin{align*}
N_s & := |\{i \in [m] :E_i \le s\}| = \sum_{i\in[m]} \I{E_i \le s}=\max(j: E_{I_j}\le s)\, , \\
X_s & := \sum_{i\in [m]} (d_i-1)\I{E_i \le s} = \sum_{j\in[N_s]} (d_{I_j} - 1) 
\end{align*}
and 
 \[
 M= \frac{1-e^{-2}}{4} \frac{n \sigma^2s}{1+\sigma  n^{1/2}s} > 
 \frac{1-e^{-2}}{8} \frac{n \sigma^2t}{1+\sigma  n^{1/2}t}
 =\frac{cn \sigma^2t}{1+\sigma  n^{1/2}t}
 \]
Under the above coupling, 
\[
\sum_{1 \le j \le nt} (d_{i_\randomtree(j)}-1)
=\sum_{1 \le j \le nt} (d_{I_j}-1). 
\]
It follows that if $\sum_{1 \le j \le nt}(d_{i_\randomtree(j)}-1) \le M$ then either $nt \le N_s$ or $X_s \le M$, so 
\begin{align}\label{eq:coupling_bound}
\p\Big(
\sum_{1 \le j \le nt} (d_{i_{\randomtree}(j)}-1)
\le \frac{c\sigma^2nt}{1+\sigma n^{1/2}t}
\Big) & 
\le \p\Big(\sum_{1 \le j \le nt} (d_{i_\randomtree(j)}-1) \le M\Big) \nonumber\\
& \le \p(N_s \ge nt) + \p(X_s \le M),
\end{align}
where in the first line we have used our lower bound on $M$. 

To make use of this inequality, we first bound 
$\e X_s = \sum_{i\in [m]} (d_i-1) (1-e^{-d_i s})$ 
from below. We consider two cases, depending on whether $\sigma^2=n^{-1}\sum_{i \in [m]} d_i (d_i-1)$ is dominated by the contributions from small or large summands. 

Fix $r \in (0,1)$. First suppose that $\sum_{i \in [m]: d_is \le 2} d_i(d_i-1) \ge rn\sigma^2$. 
For $d_is \le 2$ we have $(1-e^{-d_is}) \ge (1-e^{-2}) d_is/2$, so 
\[
\sum_{i \in [m]: d_is \le 2} (d_i-1) (1-e^{-d_is})
\ge (1-e^{-2})\frac s2 
\sum_{i \in [m]: d_is \le 2} d_i(d_i-1) \ge 
(1-e^{-2})\frac s2 rn \sigma^2\, .
\]

Next suppose that 
$\sum_{i \in [m]: d_is \le 2} d_i(d_i-1) < rn\sigma^2$. 
For $i$ such that $d_is > 2$ we have $1-e^{-d_i s} > 1-e^{-2}$ and $(d_i-1)/d_i > (2-s)/2$, so using that $\sum_i x_i > (\sum_i x_i^2)^{1/2}$ for non-negative $x_i$, we obtain 
\begin{align*}
\sum_{i \in [m]: d_is > 2} (d_i-1) (1-e^{-d_is})
& \ge 
(1-e^{-2})
\Big(\sum_{i \in [m]: d_is > 2} (d_i-1)^2\Big)^{1/2}\\
& \ge 
(1-e^{-2})
\Big(\frac{2-s}{2}\sum_{i \in [m]: d_is > 2} d_i(d_i-1)\Big)^{1/2} \\
& > (1-e^{-2})
\frac{2-s}{2}(1-r) n^{1/2} \sigma \, .
\end{align*}
Taking $r=(2-s)/(2-s+\sigma  n^{1/2} s)$ makes the lower bounds on $\sum_{i \in [m]: d_is \le 2} d_i(d_i-1)$ and on 
$\sum_{i \in [m]: d_is > 2} d_i(d_i-1)$ in the two cases equal, and yields the bound
\[
\e X_s=\sum_{i \in [m]} (d_i-1)(1-e^{-d_is})\ge \frac{1-e^{-2}}2 \frac{(2-s)n \sigma^2s}{2-s+\sigma  n^{1/2}s} 
\ge \frac{1-e^{-2}}2\frac{n \sigma^2s}{1+\sigma  n^{1/2}s}  =2M\, ,
\]
the last inequality holding since $s =t/2\le 1/2$. 
By Lemma~\ref{lem:exp_sumbound} it follows that 
\begin{equation}\label{eq:xs_lower}
\p(X_s \le M) 
\le 
\p(X_s \le \e X_s/2)
\le \exp(- s\e X_s/4) \le 
\exp(-tM/4)\, .
\end{equation}
Next, note that
\[
\e N_s = \sum_{i \in [m]} \p(E_i \le s)
=\sum_{i \in [m]} (1-e^{-d_i s})
\le \sum_{i \in [m]} d_i s = s(n-1)<sn\, .
\]
Since $N_s$ is a sum of $[0,1]$-valued random variables, Bernstein's inequality \cite{Bernstein1927,Craig1933} then gives
\begin{equation}\label{eq:ns_upper}
\p(N_s \ge tn) = \p(N_s \ge 2sn) \le \exp(-\tfrac{3}{8}sn)
=\exp(-\tfrac{3}{16}tn)
\, .
\end{equation}

Using the bounds (\ref{eq:xs_lower}) and (\ref{eq:ns_upper}) in 
 (\ref{eq:coupling_bound}) 
 now yields
\begin{align*}
\p\Big(
\sum_{1 \le j \le nt} (d_{i_{\randomtree}(j)}-1)
\le \frac{c\sigma^2nt}{1+\sigma n^{1/2}t}
\Big)
& \le \exp(-\tfrac{3}{16}tn)
+
\exp(-tM/4).
\end{align*}
In view of the lower bound on $M$, this proves the bound claimed in the lemma.
\end{proof}

\begin{proof}[Proof of Proposition~\ref{cor.firststep}]
First recall that the non-leaf vertices of $\randomtree_{\rho(\alpha\sigma n^{1/2})}$ are precisely $i(1),\dots, i(k)$, with $k$ minimal such that $\sum_{j=1}^k(d_{i(j)}-1)\geq \alpha\sigma n^{1/2}$. Now, if $\height\left(\randomtree_{\rho(\alpha\sigma n^{1/2})} \right)>\tfrac{bn^{1/2}}{2\sigma}$, then 
	$\randomtree_{\rho(\alpha\sigma n^{1/2})}$ must contain more than $\tfrac{1}{2}bn^{1/2}/\sigma $ non-leaf vertices. This implies that $k>\tfrac{1}{2}bn^{1/2}/\sigma$, which by the definition of $k$ yields that 
	\[\sum_{1\leq j \leq \tfrac{1}{2}bn^{1/2}/\sigma} \left(d_{i_{\randomtree}(j)}-1\right) < \alpha\sigma n^{1/2}\, .\] 
	It follows from this inclusion of events that 
	\[
\p\left( 
\height\left(\randomtree_{\rho(\alpha\sigma n^{1/2})} \right)>\tfrac{bn^{1/2}}{2\sigma}
\right)
\le
\p\left(
	\sum_{1\leq j \leq \tfrac{1}{2}bn^{1/2}/\sigma} \left(d_{i_{\randomtree}(j)}-1\right) < \alpha\sigma n^{1/2}.
\right)\, .
	\]
	
	Now fix $b \ge 1$ and set $t=b/(2\sigma n^{1/2})$ so that 
	$tn=bn^{1/2}/(2\sigma)$. Then with $c=(1-e^{-2})/8$ as in the statement of Lemma \ref{lem.firststephighvariance}, we have $c=3\alpha$, so since $b\geq 1$, 
	\[
	\alpha\sigma n^{1/2}=\frac{c\sigma n^{1/2}}{3} \leq c\sigma n^{1/2}\frac{b/2}{1+b/2}=\frac{c \sigma^2 nt}{1+\sigma n^{1/2}t}\, .
	\]
	It follows that 
	\[
	\p\left(\sum_{1\leq j \leq \tfrac{1}{2}bn^{1/2}/\sigma} \left(d_{i_{\randomtree}(j)}-1\right) < \alpha\sigma n^{1/2} \right)\leq \p\left(
	\sum_{1 \le j \le nt} (d_{i_{\randomtree}(j)}-1)
	\le \frac{c \sigma^2nt}{1+\sigma n^{1/2}t}
	\right)\, ,
	\]
and Lemma \ref{lem.firststephighvariance} now yields the result.
\end{proof}

	\section{\bf Bienaym\'e trees, simply generated trees, and random block-stable graphs}\label{sec.bienandsimpygen}

In this section we use Theorems~\ref{thm.gaussiantails} and~\ref{thm.infinitevariance} to prove Theorems~\ref{thm.bieninfinitevariance},~\ref{thm.biensubcriticalnoexpdecay} and~\ref{thm.biengaussiantails}, and additionally to prove 
a conjecture from \cite{McDiarmidScott} on random block-stable graphs, as well as
two conjectures from \cite{janson_simpygen} on \emph{simply generated trees}. The latter class of trees is defined as follows. Fix non-negative real weights $\weight=(w_k,k\geq 0)$ with $w_0>0$. Given a finite plane tree $\tree$, we define the weight of $\tree$ to be 
\[\weight(\tree)=\prod_{v\in v(\tree)} w_{d_{\tree}(v)}.
\]
Next, for positive integers $n$, let
\[Z_n=Z_n(\weight)=\sum_{\{\text{plane trees }\tree:|v(\tree)|=n\}} \weight(\tree).
\]
If $Z_n >0$, then we define a random tree $\randomtree_{\weight,n}$ by setting
\[\p(\randomtree_{\weight,n}=\tree)=\frac{\weight(\tree)}{Z_n} \]
for plane trees $\tree$ with $|v(\tree)|=n$. The random tree $\randomtree_{\weight,n}$ is called a simply generated tree of size $n$ with weight sequence $\weight$. 
Note that if $\sum_{k\geq 0} w_k=1$, then $\weight$ is an offspring distribution, and $\randomtree_{\weight,n}$ is indeed distributed as a Bienaym\'e tree with offspring distribution $\weight$, conditioned to have size $n$ -- so this notation agrees with (but generalizes) that from earlier in the paper. From this we also see that conditioned Bienaym\'e trees are a subclass of simply generated trees; we will use this fact below.

Let $\Phi(z)=\Phi_{\weight}(z)=\sum_{k\geq 0} w_k z^k$ be the generating function of the weight sequence $\weight$,  and let $\rho=\rho_\weight=\sup(s\ge 0: \Phi(s)<\infty)$ be its radius of convergence. For $s>0$ such that $\Phi(s)<\infty$, we let
\[
\Psi(s)=\Psi_\weight(s)=\frac{s\Phi'(s)}{\Phi(s)}=\frac{\sum_{k\geq 0}kw_ks^k}{\sum_{k\geq 0}w_ks^k}.
\]
If $\Phi(\rho)=\infty$ then also define 
\[
\Psi(\rho)=\Psi_\weight(\rho)=\lim_{s\uparrow\rho}\Psi(s); 
\]
by Lemma 3.1(i) in \cite{janson_simpygen}, $\Psi$ is strictly increasing on $[0,\rho)$ so this limit exists. 
Let $\nu=\nu_{\weight}=\Psi_\weight(\rho)$, and define
\begin{equation}\label{eq:tau_def}
\tau=\tau_{\weight} := 
\begin{cases}
\rho & \mbox{ if } \nu_{\weight} \le 1 \\
\Psi^{-1}(1) & \mbox{ if } \nu_{\weight} > 1\, .
\end{cases}
\end{equation}
Note that if $\nu_{\weight} > 1$ then $\tau \in [0,\rho)$.
Define $\hat{\sigma}^2=\hat{\sigma}_{\weight}^2=\tau\Psi'(\tau)$. 
For later use, we note that if $\sequence{w}=(w_k,k \ge 0)$ is a probability distribution with finite mean, so $\sum_{k \ge 0} w_k=1$ and $|\sequence{w}|_1=\sum_{k \ge 0}k w_k <\infty$, then we always have $\rho \ge 1$, so 
\begin{align}\label{eq:psi_formulas}
\begin{split}
\nu & = \Psi_{\sequence{w}}(\rho) \ge 
\Psi_{\sequence{w}}(1) = \sum_{k\ge 0} kw_k = |\sequence{w}|_1, \mbox{ and }\\
\Psi'_{\sequence{w}}(1) & = \sum_{k \ge 0} k^2w_k-\left( \sum_{k\ge 0} kw_k \right)^2  = |\sequence{w}|_2-|\sequence{w}|^2_1\, .
\end{split}
\end{align}

The following theorem resolves Conjecture 21.5 and Problem 21.7 from \cite{janson_simpygen}. 
\begin{thm}\label{thm.simplygen}
	Let $\weight=(w_k,k\geq 0)$ be a weight sequence with $w_0>0$ and with $w_k>0$ for some $k\geq 2$. 
	If either $\hat{\sigma}^2=\infty$ or $\nu < 1$ then $n^{-1/2}\height(\randomtree_{\weight,n})\to0$,
	where the convergence is in both probability and expectation, as $n\to \infty$ along integers $n$ such that $Z_n(\weight)>0$.
\end{thm}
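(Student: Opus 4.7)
The plan is to reduce the statement to the corresponding Bienaym\'e results, Theorems~\ref{thm.bieninfinitevariance} and~\ref{thm.biensubcriticalnoexpdecay}, via the standard exponential tilt. First I would check that in both hypothesis cases we necessarily have $\tau=\rho$ and $\nu\le 1$: if $\nu>1$ then $\tau=\Psi^{-1}(1)$ lies in the open interval $(0,\rho)$, on which $\Phi$ and hence $\Psi$ is analytic, so $\hat{\sigma}^2=\tau\Psi'(\tau)<\infty$, contradicting the first case; and the second case $\nu<1$ immediately gives $\tau=\rho$ by the definition~\eqref{eq:tau_def}.

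Next I would show $\rho<\infty$ and $\Phi(\rho)<\infty$, so that the tilted sequence $\mu_k:=w_k\rho^k/\Phi(\rho)$ defines a probability distribution on $\N$. If $\rho=\infty$, then since $w_k>0$ for some $k\ge 2$, one has $\Psi(s)\to\nu\ge 2$ as $s\to\infty$ (with $\nu=\max\{k:w_k>0\}$ if only finitely many $w_k$ are positive, and $\nu=\infty$ otherwise), contradicting $\nu\le 1$. For finiteness of $\Phi(\rho)$ I would integrate the identity $\tfrac{d}{ds}\log\Phi(s)=\Psi(s)/s$ over an interval $[\rho_0,\rho)$: monotonicity of $\Psi$ together with $\nu\le 1$ gives $\int_{\rho_0}^\rho \Psi(s)/s\,ds\le \nu\log(\rho/\rho_0)<\infty$, so $\Phi(\rho)<\infty$.

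Since $\sum_v d_\tree(v)=n-1$ for any plane tree $\tree$ of size $n$, a direct calculation yields
\[
\p(\randomtree_{\mu,n}=\tree)=\frac{\rho^{n-1}}{\Phi(\rho)^n\,\p(|\randomtree_\mu|=n)}\,\weight(\tree),
\]
so since the prefactor depends only on $n$, $\randomtree_{\weight,n}\eqdist\randomtree_{\mu,n}$ and $\p(|\randomtree_\mu|=n)>0$ iff $Z_n(\weight)>0$. A short computation also gives $|\mu|_1=\Psi(\rho)=\nu$ and $|\mu|_2^2-|\mu|_1^2=\rho\Psi'(\rho)=\hat{\sigma}^2$, so $|\mu|_2=\infty$ iff $\hat{\sigma}^2=\infty$.

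With these identifications the conclusion is immediate. If $\hat{\sigma}^2=\infty$ then $|\mu|_1=\nu\le 1$ and $|\mu|_2=\infty$, so Theorem~\ref{thm.bieninfinitevariance} applied to $\randomtree_{\mu,n}$ gives the result. If $\nu<1$ then $|\mu|_1<1$, and $\sum_{k\ge 0}e^{tk}\mu_k=\Phi(\rho e^t)/\Phi(\rho)=\infty$ for every $t>0$ since $\rho e^t$ exceeds the radius of convergence of $\Phi$; Theorem~\ref{thm.biensubcriticalnoexpdecay} then delivers the conclusion. The one step needing genuine care is the verification that $\Phi(\rho)<\infty$ -- ensuring that the tilt is legitimate -- which is precisely what the integration argument in the second paragraph provides.
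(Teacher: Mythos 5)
Your reduction of $\randomtree_{\weight,n}$ to a conditioned Bienaym\'e tree $\randomtree_{\mu,n}$ with $\mu_k=w_k\rho^k/\Phi(\rho)$ is mathematically sound: the case analysis showing $\tau=\rho$ and $\nu\le 1$, the argument that $\rho<\infty$, and in particular the integration of $\frac{d}{ds}\log\Phi(s)=\Psi(s)/s$ to establish $\Phi(\rho)<\infty$ all check out, as do the identities $|\mu|_1=\nu$, $\hat{\sigma}^2=|\mu|_2^2-|\mu|_1^2$, and $\sum_k e^{tk}\mu_k=\infty$ for $t>0$. However, the final step — ``apply Theorems~\ref{thm.bieninfinitevariance} and~\ref{thm.biensubcriticalnoexpdecay}'' — is circular in the context of this paper. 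In Section~\ref{sec.bienandsimpygen}, the paper proves Theorems~\ref{thm.bieninfinitevariance} and~\ref{thm.biensubcriticalnoexpdecay} \emph{as corollaries of} Theorem~\ref{thm.simplygen}, by observing that conditioned Bienaym\'e trees are simply generated trees. There is no independent proof of the Bienaym\'e theorems available to cite; your reduction establishes an equivalence, not a proof.

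The paper's actual proof of Theorem~\ref{thm.simplygen} works at the level of random degree sequences rather than tilting to a conditioned Bienaym\'e tree. It invokes Proposition~\ref{prop:planetrees} to write $\height(\randomtree_{\weight,n})\eqdist\height(\randomtree_{\sequence{D}(\weight,n)})$, uses Theorem~\ref{thm.janson} (Janson) and Corollary~\ref{cor.simplygen} to show that with probability $1-e^{-cn}$ the random degree sequence $\sequence{D}$ has $n_1(\sequence{D})$ bounded away from $n$ and $\sigma_{\sequence{D}}$ larger than any fixed constant $K$, and then conditions on these favorable events and applies Theorem~\ref{thm.infinitevariance} to obtain a tail bound of the form $4\exp(-\epsilon^{-2}t)$, which is then integrated to control the expectation. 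To salvage your approach you would need to replace the citations of Theorems~\ref{thm.bieninfinitevariance} and~\ref{thm.biensubcriticalnoexpdecay} with a direct application of the degree-sequence machinery (Theorems~\ref{thm.gaussiantails}/\ref{thm.infinitevariance} together with Theorem~\ref{thm.janson}); once you do that, the tilt to $\mu$ at $\rho$ is exactly the equivalent weight sequence $\hat\mu$ that already appears inside Theorem~\ref{thm.janson}, and your argument essentially becomes the paper's.
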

We also prove the following, more quantitative theorem, which resolves Problem 21.9 from \cite{janson_simpygen}.
\begin{thm}\label{thm.simplygengaussian}
	Fix $\epsilon \in [0,1)$. Then, there exist constants $c_1=c_1(\epsilon)$ and $c_2=c_2(\epsilon)$ such that the following holds. Let $\weight=(w_k,k\geq 0)$ be a weight sequence with $w_0>0$ and $w_k>0$ for some $k\geq 2$ and $\tfrac{w_1\tau}{\Phi(\tau)}<1-\epsilon$. For any $t>1$ and any $n$ large enough with $Z_n(\weight)>0$ and
	\[\p\left( \height(\randomtree_{\weight,n})\ge  tn^{1/2}\right)< c_1\exp(-c_2t^2).\]
\end{thm}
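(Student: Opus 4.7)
The plan is to reduce Theorem~\ref{thm.simplygengaussian} to Theorem~\ref{thm.gaussiantails} via the standard tilting correspondence between simply generated trees and Bienaym\'e trees, followed by Proposition~\ref{prop:planetrees} and a concentration estimate for the number of degree-one vertices. I define the probability distribution $\mu=(\mu_k,k\geq 0)$ on $\N$ by $\mu_k = w_k\tau^k/\Phi(\tau)$. Using $\sum_{v \in v(\tree)} d_\tree(v) = n-1$ for any plane tree $\tree$ of size $n$, a short computation gives $\prod_{v \in v(\tree)}\mu_{d_\tree(v)} = (\tau^{n-1}/\Phi(\tau)^n)\,\weight(\tree)$, so $\randomtree_{\weight,n}\eqdist \randomtree_{\mu,n}$ as random plane trees of size $n$, and the hypothesis $w_1\tau/\Phi(\tau)<1-\epsilon$ reads $\mu_1<1-\epsilon$.

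I next apply Proposition~\ref{prop:planetrees}: labeling the vertices of $\randomtree_{\mu,n}$ by a uniformly random permutation of $[n]$ yields a random tree $\hat{\randomtree}$ whose degree sequence $\sequence{D}$ satisfies $\hat{\randomtree} \mid \sequence{D} \in_u \cT_{\sequence{D}}$. Since height is label-invariant, Theorem~\ref{thm.gaussiantails} gives, for every realisation of $\sequence{D}$,
\[
\p\!\left(\height(\randomtree_{\weight,n}) > tn^{1/2}\,\big|\,\sequence{D}\right)
\le 5\exp\!\Big(-\tfrac{n-n_1(\sequence{D})}{n}\tfrac{t^2}{2^{13}}\Big).
\]
The pointwise inequality $e^{-cx}\le e^{-c\delta_0}+\I{x<\delta_0}$, applied with $\delta_0=\epsilon/2$ and $c=t^2/2^{13}$, followed by taking expectations, yields
\[
\p\!\left(\height(\randomtree_{\weight,n})>tn^{1/2}\right) \le 5e^{-\epsilon t^2/2^{14}} + 5\,\p\!\left(n_1(\sequence{D}) > (1-\epsilon/2)n\right).
\]
Since $\height(\randomtree_{\weight,n})\le n-1$, the left-hand side vanishes for $t>n^{1/2}$. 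Thus it suffices to bound the second term on the right by $Ae^{-an}$ for constants $A,a>0$ depending only on $\epsilon$; on the range $1<t\le n^{1/2}$ one then has $e^{-an}\le e^{-at^2}$, delivering a Gaussian tail with $c_2=\min(\epsilon/2^{14},a)$ and $c_1$ absolute.

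The main technical obstacle is therefore to establish the concentration estimate $\p(n_1(\sequence{D}) > (1-\epsilon/2)n)\le Ae^{-an}$ for all $n$ large enough. Via the {\L}ukasiewicz-walk encoding of Bienaym\'e trees together with the cycle lemma, $n_1(\sequence{D})$ has the same distribution as $|\{i\in[n]:\xi_i=1\}|$ where $\xi_1,\ldots,\xi_n$ are i.i.d.\ with law $\mu$ conditioned on $\xi_1+\dots+\xi_n=n-1$. Unconditionally, $\mu_1\le 1-\epsilon$ and a Chernoff bound yield $\p(|\{i:\xi_i=1\}|>(1-\epsilon/2)n)\le e^{-a'n}$ for some $a'=a'(\epsilon)>0$; the difficulty is to absorb the cost $1/\p(\sum_i\xi_i=n-1)$ of the conditioning. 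In the critical finite-variance regime a local limit theorem loses only a polynomial factor, but the subcritical and condensation cases demand more delicate estimates, which I expect to extract or adapt from the degree-convergence results of Janson~\cite{janson_simpygen}.
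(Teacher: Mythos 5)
Your approach is essentially the same as the paper's: tilt the weight sequence to an equivalent probability distribution $\mu_k = w_k\tau^k/\Phi(\tau)$, apply Proposition~\ref{prop:planetrees} to pass to a uniformly random tree conditional on the random degree sequence $\sequence{D}$, invoke Theorem~\ref{thm.gaussiantails} conditionally, and then use exponential concentration of $n_1(\sequence{D})/n$ around $\mu_1 < 1-\epsilon$ to kill the bad event. The first few steps are correct and written out somewhat more directly than in the paper (you take expectations of the conditional tail bound with a pointwise splitting inequality, whereas the paper splits on a good event $\cD_n = \{\dseq : 1-n_1(\dseq)/n \ge \epsilon/2\}$; the two are equivalent).

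The one genuine gap is in your last paragraph. The concentration estimate $\p(n_1(\sequence{D}) > (1-\epsilon/2)n) \le e^{-cn}$ that you identify as the main technical obstacle is precisely the content of Theorem~\ref{thm.janson} (stated in the paper and attributed to Janson's Theorem~11.4 in \cite{janson_simpygen}, with the case $\nu=0$ handled in \cite{addarioberry2021universal}): for any $\delta>0$ there is $c(\delta)>0$ such that $\p(|N_k/n - \hat{\mu}_k| > \delta) < e^{-cn}$ for all $n$ large. Applied with $k=1$ and $\delta = \epsilon/2$, and using $\hat{\mu}_1 < 1-\epsilon$, this gives exactly the bound you want. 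You do not need to re-derive it via the {\L}ukasiewicz walk, the cycle lemma, a Chernoff bound, and a local limit theorem to absorb the conditioning cost $1/\p(\sum\xi_i = n-1)$ — a route that, as you correctly note, is delicate outside the critical finite-variance regime. The point of Theorem~\ref{thm.janson} is that Janson's argument already handles the subcritical and condensation cases uniformly; the exponential error bound is recorded in the course of that proof even though it is not stated explicitly. Once you cite Theorem~\ref{thm.janson} in place of your sketch, the argument closes and matches the paper's.
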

In fact, our result is stronger than what is conjectured in Problem 21.9 from \cite{janson_simpygen}, because our parameters $c_1$ and $c_2$ only depend on $\weight$ through $\tfrac{w_1\tau}{\Phi(\tau)}$. (Note that Problem 21.9 does not exclude weight sequences $\weight$ with $w_k=0$ for all $k\geq 2$, but for such sequences either $Z_n(\weight)=0$ for all $n\geq 2$ or $ \height(\randomtree_{\weight,n})=n-1$ deterministically.)

This theorem has the following immediate corollary. 

\begin{cor}\label{cor.expectedheightsimplygen}
	For any weight sequence $\weight$ on $\N$ with $w_0>0$ and $w_k>0$ for some $k\geq 2$,  $\e[\height(\randomtree_{\weight,n})]=O(n^{1/2})$, and, more generally, for any fixed $r<\infty$, $\e[\height(\randomtree_{\weight,n})^r]=O(n^{r/2})$ as $n\to\infty$ over all $n$ such that $\p(|\randomtree_\mu|=n)>0$. 
	\qed
\end{cor}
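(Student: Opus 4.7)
The plan is to derive this corollary by integrating the sub-Gaussian tail bound from Theorem~\ref{thm.simplygengaussian}. First I need to verify that Theorem~\ref{thm.simplygengaussian} does in fact apply to the weight sequence $\weight$ with some $\epsilon = \epsilon(\weight) > 0$. Since $w_0 > 0$ and $w_k > 0$ for some $k \ge 2$, the tilted probability distribution $\tilde{w}_j := w_j \tau^j/\Phi(\tau)$ is well defined (noting that $\tau \in (0,\rho]$ and $\Phi(\tau) \in (0,\infty)$ under the standing assumptions), and it satisfies $\tilde{w}_0 > 0$ together with $\tilde{w}_k > 0$ for some $k \ge 2$. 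In particular $\tilde{w}_1 = w_1 \tau / \Phi(\tau) < 1$, so there exists $\epsilon = \epsilon(\weight) > 0$ with $w_1 \tau/\Phi(\tau) < 1 - \epsilon$. Applying Theorem~\ref{thm.simplygengaussian} with this $\epsilon$ yields constants $c_1, c_2 > 0$ (depending on $\weight$ and $r$) such that for all sufficiently large $n$ with $Z_n(\weight) > 0$, and for all $t > 1$,
\[
\p\left( \height(\randomtree_{\weight,n}) > t n^{1/2}\right) \le c_1 \exp(-c_2 t^2).
\]

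Next I would convert this tail bound into a moment bound in the standard way. Using the layer-cake formula and the substitution $x = u n^{1/2}$,
\[
\e\left[\height(\randomtree_{\weight,n})^r\right]
= \int_0^\infty r x^{r-1} \p\left(\height(\randomtree_{\weight,n}) > x\right)\, dx
= n^{r/2}\int_0^\infty r u^{r-1} \p\left(\height(\randomtree_{\weight,n}) > u n^{1/2}\right)\, du.
\]
Splitting the $u$-integral into $[0,1]$ and $[1,\infty)$, the first piece is bounded by $\int_0^1 r u^{r-1}\, du = 1$ using the trivial bound $\p(\cdot) \le 1$, while the second piece is bounded, via the sub-Gaussian tail estimate above, by the finite constant
\[
\int_1^\infty r u^{r-1} c_1 \exp(-c_2 u^2)\, du
\]
which depends only on $r$, $c_1$, and $c_2$ (hence only on $r$ and $\weight$). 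Combining the two pieces gives $\e[\height(\randomtree_{\weight,n})^r] \le C_{r,\weight}\, n^{r/2}$ for all sufficiently large $n$ with $Z_n(\weight) > 0$, which is precisely the $O(n^{r/2})$ assertion.

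There is no real obstacle: the only small point to verify is that the $\epsilon$-condition in Theorem~\ref{thm.simplygengaussian} holds for any fixed admissible $\weight$, which follows from the observation above that $\tilde{w}_1 < 1$ strictly because $\tilde{w}_0 > 0$ and at least one $\tilde{w}_k$ with $k \ge 2$ is positive. Everything else is the routine integration of a sub-Gaussian tail, exactly analogous to how Corollary~\ref{cor.expectedheightbien} is deduced from Theorem~\ref{thm.biengaussiantails}.
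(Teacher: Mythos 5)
Your proof is correct and takes the approach the paper clearly intends (the paper labels this an ``immediate corollary'' of Theorem~\ref{thm.simplygengaussian} with a \verb|\qed| and no further details). The one small thing worth making explicit, which you do handle, is that the hypothesis $w_1\tau/\Phi(\tau)<1-\epsilon$ of Theorem~\ref{thm.simplygengaussian} can always be satisfied for \emph{some} $\epsilon=\epsilon(\weight)>0$ because $\hat{\mu}_1 = w_1\tau/\Phi(\tau)$ is strictly less than $1$ (since $\hat\mu_0>0$ and $\hat\mu_k>0$ for some $k\ge2$); your verification of this, together with the standard layer-cake integration of the sub-Gaussian tail, closes the argument.
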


In this section we will make use of the following notation. For $\weight$ a weight sequence, let $\sequence{D}=\sequence{D}(\weight,n)=(D_1,\dots,D_n)$ be a random degree sequence with the following law. Let $\randomtree$ be a simply generated tree of size $n$ with weight sequence $\weight$. Conditionally given $\randomtree$, let $\hat{\randomtree}$ be the random tree obtained as follows: label the vertices of $\randomtree$ by a uniformly random permutation of $[n]$, then forget about the plane structure. Let $D_i$ be equal to the degree of $i$ in $\hat{\randomtree}$.  Then, for $k\in \N$, let $N_k=N_k(\weight,n)=|\{i \in [n]: D_i=k\}|$ be the number of entries of $\sequence{D}$ which equal $k$, so $N_k=n_k(\sequence{D}(\sequence{w},n))$ and $\sum_{k=0}^\infty N_k=n$. 

Our proofs will exploit two distributional identities. 
The first is the following. Let 
$\randomtree_{\sequence{D}(\weight,n)} \in_u \cT_{\sequence{D}(\weight,n)}$, by which we mean that, conditionally given that 
$\sequence{D}(\weight,n)=\dseq$, we have  
$\randomtree_{\sequence{D}(\weight,n)} \in_u\cT_{\dseq}$. Then 
$\hat{\randomtree}\overset{d}{=} \randomtree_{\sequence{D}(\weight,n)}$ by Proposition \ref{prop:planetrees}, so $\height(\randomtree_{\weight,n})\overset{d}{=}\height(\randomtree_{\sequence{D}(\weight,n)})$.
The second is the fact that for any weight sequence $\sequence{w}=(w_k,k \ge 0)$ and any constants $a,b>0$, the weight sequence $\hat{\sequence{w}}$ with $\hat{w}_k = ab^k w_k$ is {\em equivalent} to $\sequence{w}$, i.e., it satisfies that $\randomtree_{\sequence{w},n} \eqdist\randomtree_{\hat{\sequence{w}},n}$ for all $n$ for which either (and thus both) of the random trees are defined. This is an immediate consequence of the formula \eqref{eq:twn_ident} for the distribution of $\randomtree_{\sequence{w},n}$.

These two equalities in distribution imply that if we obtain good control over the asymptotic behaviour of $(N_k(\hat{w},n),k \ge 0)$ for some weight sequence $\hat{w}$ which is equivalent to $\sequence{w}$, then we can use Theorems \ref{thm.gaussiantails} and \ref{thm.infinitevariance}, on the heights of trees with given degree sequences, to prove tail bounds for $\height(\randomtree_{\weight,n})$. 
To obtain such control, we rely on the following result.

\begin{thm}[\cite{janson_simpygen}, Theorem 11.4; \cite{addarioberry2021universal}, Theorem 5.2]\label{thm.janson}
	Let $\weight=(w_k,k\geq 0)$ be a weight sequence with $w_0>0$ and $w_k>0$ for some $k\geq 2$. Write $\tau=\tau_{\sequence{w}}$ and define  \[ \hat{\mu}_k=\hat{\mu}_k(\weight)=\frac{w_k\tau^k}{\Phi(\tau)}\] for $k\in \N$. Then $\hat{\mu}=(\hat{\mu}_k,k\geq 0)$ is a probability distribution with mean $m=1\wedge \nu$ and variance $\hat{\sigma}^2$. For $k \ge 0$ define $N_k=n_k(\sequence{D}(\sequence{w},n))$, as above. Then for every $\eps > 0$ there exists $c=c(\eps)>0$ that does not depend on $\weight$ such that for all $n$ sufficiently large, for every 
	 integer $k\geq 0$, 
	\[\p\left(\left|\frac{N_k(\weight,n)}{n}-\hat{\mu}_k\right|>\epsilon \right)<e^{-cn}.
	\]
\end{thm}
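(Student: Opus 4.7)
The plan is to reduce to the case where $\sequence{w}$ is a probability distribution, exploit the cycle-lemma representation of the conditioned Bienaym\'e tree, and then separate into two regimes (critical/supercritical vs.\ strictly subcritical, the latter requiring condensation).

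For the reduction, observe that for any constants $a,b>0$, replacing $w_k$ by $ab^k w_k$ leaves the law of $\randomtree_{\weight,n}$ unchanged, since $\weight(\tree)$ gets multiplied by $a^n b^{n-1}$, which is constant on plane trees of size $n$. Taking $a=\Phi(\tau)^{-1}$ and $b=\tau$ turns $\weight$ into $\hat{\mu}$, and $\sum_k\hat{\mu}_k=\Phi(\tau)/\Phi(\tau)=1$, so $\hat{\mu}$ is a probability distribution with $\randomtree_{\weight,n}\eqdist\randomtree_{\hat{\mu},n}$. Its mean is $\sum_k k\hat{\mu}_k=\tau\Phi'(\tau)/\Phi(\tau)=\Psi(\tau)$, which equals $\nu$ when $\nu\le 1$ (since then $\tau=\rho$ so $\Psi(\tau)=\nu$) and $1$ when $\nu>1$ (since then $\tau=\Psi^{-1}(1)$); hence the mean is $m=1\wedge\nu$. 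Differentiating the identity $s\Phi'(s)=\Phi(s)\Psi(s)$ gives $\sum_k k^2\hat{\mu}_k=\tau\Psi'(\tau)+\Psi(\tau)^2$, so the variance is $\tau\Psi'(\tau)=\hat{\sigma}^2$.

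Next I would invoke the classical cycle-lemma representation of $\randomtree_{\hat{\mu},n}$: if $\xi_1,\xi_2,\ldots$ are i.i.d.\ with law $\hat{\mu}$ and $S_n=\xi_1+\ldots+\xi_n$, then conditionally on $\{S_n=n-1\}$ the multiset $\{\xi_1,\ldots,\xi_n\}$ has the same law as the multiset of vertex degrees of $\randomtree_{\hat{\mu},n}$. Thus $N_k(\weight,n)\eqdist\sum_{i=1}^n\I{\xi_i=k}$ under this conditioning. When $\nu\ge 1$, $\hat{\mu}$ has mean exactly $1$ and $\{S_n=n-1\}$ is a typical event: $\p(S_n=n-1)$ decays only polynomially (by a local limit theorem, stable if $\hat{\sigma}^2=\infty$). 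Because $\hat{\mu}$ has bounded exponential moments in a neighbourhood of $\tau$, a joint application of Cram\'er's theorem to the pair $(S_n,\sum_{i=1}^n\I{\xi_i=k})$ yields $\p(S_n=n-1,\,|n^{-1}\sum_i\I{\xi_i=k}-\hat{\mu}_k|>\epsilon)\le e^{-c'n}$, and dividing by the polynomial lower bound on $\p(S_n=n-1)$ gives the claim in this regime, uniformly in $k$.

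The main obstacle is the strictly subcritical regime $\nu<1$: here $\hat{\mu}$ has mean $\nu<1$ and $\{S_n=n-1\}$ is genuinely exponentially unlikely, while $\tau=\rho$ is the radius of convergence of $\Phi$, so no further exponential tilt is available to make the conditioning typical. The tree instead exhibits \emph{condensation}: one vertex asymptotically absorbs the excess degree $(1-\nu)n+o(n)$, while the remaining degrees behave approximately i.i.d.\ with law $\hat{\mu}$. Quantifying this requires the convolution decomposition $\p(S_n=n-1)=\sum_{v\ge 0}\hat{\mu}_v\,\p(S_{n-1}=n-1-v)$, showing that the dominant contribution comes from $v\approx(1-\nu)n$ (which uses that $\hat{\mu}_v$ is subexponential in $v$, as $\tau=\rho$ is already the radius of convergence), and then applying Cram\'er's theorem to the remaining $n-1$ variables whose sum is now typical. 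This is essentially Janson's route in \cite{janson_simpygen}, and delivers the exponential bound $\p(|N_k/n-\hat{\mu}_k|>\epsilon)\le e^{-c(\epsilon)n}$ for all $k$, completing the proof sketch.
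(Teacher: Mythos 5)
This theorem is not proved in the paper: it is imported from Janson's survey (the case $\nu>0$) and from \cite{addarioberry2021universal} (the case $\nu=0$), so there is no ``paper's own proof'' to compare against. Your sketch reconstructs Janson's strategy, and the framing (equivalence of weight sequences to reduce to a probability law, the cycle-lemma representation of the degree multiset, the split on whether $\nu$ is at least $1$) is the right one. The mean/variance computation for $\hat\mu$ is also correct.

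There are, however, some genuine gaps. First, the claim that ``$\hat{\mu}$ has bounded exponential moments in a neighbourhood of $\tau$'' fails precisely at the boundary case $\nu=1$, where $\tau=\rho$ and the generating function of $\hat\mu$ has radius of convergence exactly $1$; so Cram\'er's theorem for $S_n$ is not available there. The fix is that Cram\'er is not what you need: $N_k$ is a sum of bounded indicators, so $\p(|N_k/n-\hat\mu_k|>\eps)\le 2e^{-2\eps^2 n}$ by Hoeffding, \emph{unconditionally} and \emph{uniformly in $k$}, and you then only need a sub-exponential \emph{lower} bound on $\p(S_n=n-1)$ to conclude. Second, in the subcritical regime the argument is missing exactly this lower bound. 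The condensation picture is a correct heuristic, but the quantitative input you need from it is a statement of the form $\p(S_n=n-1)\ge e^{-\delta n}$ for every $\delta>0$ and all large $n$ in the support; this follows from $\tau=\rho$ via a single-big-jump estimate. Once stated, you again divide the Hoeffding bound by it and win, and the second invocation of Cram\'er (which, again, would fail for lack of exponential moments when $\tau=\rho$) is unnecessary. Third, and most seriously, the case $\nu=0$ is not covered by your sketch at all and cannot be: when $\nu=0$ one is forced to have $\rho=\tau=0$, so $\hat\mu=\delta_0$, the i.i.d.\ variables $\xi_i$ vanish almost surely, and $\p(S_n=n-1)=0$ for $n\ge 2$, so the cycle-lemma conditioning is vacuous. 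That is precisely why the paper invokes a separate reference for this case; a genuinely different argument is required.
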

Theorem 11.4 of \cite{janson_simpygen} handles the case $\nu > 0$ in the above statement. 
The exponentially small error bounds stated above are not made explicit in the statement of \cite[Theorem 11.4]{janson_simpygen}, but are recorded in the course of its proof (see \cite[pages 163-164]{janson_simpygen}). The case $\nu=0$ was handled in \cite{addarioberry2021universal}. 

Before we prove Theorems \ref{thm.simplygen}, \ref{thm.simplygengaussian}, \ref{thm.bieninfinitevariance} and \ref{thm.biengaussiantails}, we  first illustrate that the requirements in Theorem \ref{thm.biengaussiantails}, that $1-\mu_0-\mu_1$ and $\mu_0/(\mu_0+\mu_1)$ are bounded from below, are necessary. To accomplish this we will consider probability distributions $\mu=\mu^{p,q}$ of the form 
\[
\mu_0 = q(1-p)\qquad
\mu_1 = (1-q)(1-p)\qquad
\mu_2 = p\, ,
\]
for $p,q\in (0,1)$. 
\begin{claim}\label{claim:smalldegreesyieldtalltrees}
	For any $x > 0$ and any $q>0$, $\lim_{p\downarrow0}\liminf_{n \to \infty} \p(\height(\randomtree_{\mu^{p,q},n})> xn^{1/2})=1$; and for any $x > 0$ and $p>0$, $ \lim_{q\downarrow0}\liminf_{n \to \infty} \p(\height(\randomtree_{\mu^{p,q},n})> xn^{1/2})=1$
\end{claim}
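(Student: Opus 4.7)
The plan is to reduce to the critical ``exponentially tilted'' probability distribution $\hat{\mu}^{p,q}$ that is equivalent to $\mu^{p,q}$ as a simply-generated weight sequence, and then invoke Kolchin's classical distributional limit for the height \cite[Theorem~2.4.3]{MR865130}.

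First, I would compute $\hat{\mu}^{p,q}$ via the formula $\hat{\mu}^{p,q}_k=\mu^{p,q}_k\tau^k/\Phi(\tau)$ from Theorem~\ref{thm.janson}. For $\mu^{p,q}$, the generating function $\Phi(s)=q(1-p)+(1-q)(1-p)s+ps^2$ is a polynomial, so $\rho=\infty$ and $\nu=\Psi(\infty)=2>1$; solving $\Psi(\tau)=1$ simplifies to $p\tau^{2}=q(1-p)$, giving $\tau=\sqrt{q(1-p)/p}$. The resulting $\hat{\mu}^{p,q}$ is supported on $\{0,1,2\}$ with mean $1$; combining $\hat{\mu}^{p,q}_1+2\hat{\mu}^{p,q}_2=1$ with $\hat{\mu}^{p,q}_0+\hat{\mu}^{p,q}_1+\hat{\mu}^{p,q}_2=1$ forces $\hat{\mu}^{p,q}_0=\hat{\mu}^{p,q}_2$, and a direct calculation yields $\hat{\mu}^{p,q}_0=q(1-p)/\Phi(\tau)=q(1-p)/[2q(1-p)+(1-q)(1-p)\tau]$, so the variance is $\hat{\sigma}^2=2\hat{\mu}^{p,q}_0$. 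I would then verify that $\hat{\mu}^{p,q}_0\to 0$ in both limits: as $p\downarrow 0$ with $q$ fixed, $\tau\to\infty$ and the denominator above grows linearly in $\tau$ while the numerator stays bounded; while as $q\downarrow 0$ with $p$ fixed, $\tau\to 0$ but $(1-q)(1-p)\tau\sim(1-p)\sqrt{q(1-p)/p}$ dominates $2q(1-p)$, so $\hat{\mu}^{p,q}_0\sim\sqrt{qp/(1-p)}\to 0$.

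Since $\hat{\mu}^{p,q}_k/\mu^{p,q}_k$ is of the form $a\cdot b^{k}$ (with $a=1/\Phi(\tau)$, $b=\tau$), the products $\prod_{v\in v(\tree)}\hat{\mu}^{p,q}_{d_\tree(v)}$ and $\prod_{v\in v(\tree)}\mu^{p,q}_{d_\tree(v)}$ differ by a factor that depends on $\tree$ only through its number of vertices, so $\randomtree_{\mu^{p,q},n}\eqdist\randomtree_{\hat{\mu}^{p,q},n}$ for every $n$ for which either is defined. Because $\hat{\mu}^{p,q}$ is a critical probability distribution with positive finite variance $\hat{\sigma}^2=2\hat{\mu}^{p,q}_0$, Kolchin's theorem applies to $\randomtree_{\hat{\mu}^{p,q},n}$ and gives
\[
\frac{\height(\randomtree_{\hat{\mu}^{p,q},n})}{\sqrt{n}}\convdist \frac{2}{\hat{\sigma}}\,M,\qquad M:=\sup_{t\in[0,1]}B^{\mathrm{ex}}(t),
\]
where $B^{\mathrm{ex}}$ is a standard Brownian excursion. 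The distribution of $M$ is continuous on $(0,\infty)$ with $\p(M>0)=1$, so for every $x>0$,
\[
\liminf_{n\to\infty}\p\bigl(\height(\randomtree_{\mu^{p,q},n})>x\sqrt{n}\bigr)
=\p\bigl(M>x\sqrt{\hat{\mu}^{p,q}_0/2}\bigr),
\]
and the right-hand side tends to $\p(M>0)=1$ in both parameter limits, which establishes the claim.

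The main step of the plan is the exact computation of $\hat{\mu}^{p,q}$ and the verification that $\hat{\mu}^{p,q}_0\to 0$ in both limits; the remaining argument is a direct application of the standard simply-generated-tree equivalence under weight tilting together with Kolchin's convergence-in-distribution theorem, so no significant further obstacle is expected.
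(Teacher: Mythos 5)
Your proof is correct and follows essentially the same route as the paper: reduce to the equivalent critical offspring distribution $\hat{\mu}^{p,q}$ via the exponential-tilting equivalence, compute that its variance $2\hat{\mu}_0^{p,q}$ tends to zero in both parameter limits, and invoke a classical distributional limit for the rescaled height of conditioned critical finite-variance Bienaym\'e trees. The only (cosmetic) difference is that you cite Kolchin \cite[Theorem~2.4.3]{MR865130} and track $\hat{\mu}_0\to 0$, whereas the paper cites Aldous's search-depth convergence \cite[Theorem~23]{MR1207226} and tracks $\hat{\mu}_1\to 1$; these are the same fact stated in two equivalent ways.
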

\begin{proof}
We apply Theorem~\ref{thm.janson} with $\sequence{w}=\sequence{w}^{p,q}=\mu^{p,q}$.  Elementary computation shows that the probability distribution $\hat{\mu}=\hat{\mu}^{p,q}$ from 
	Theorem~\ref{thm.janson} is given by 
	\[
	\hat{\mu}_1=\hat{\mu}_1^{p,q}=\frac{(1-q)\sqrt{1-p}}{(1-q)\sqrt{1-p}+2\sqrt{p}\sqrt{q}}
	\]
	and $\hat{\mu}_0=\hat{\mu}_2=(1-\hat{\mu}_1)/2$. 
	Since $\hat{\mu}$ is equivalent to $\mu$, it follows that that $T_{\mu^{p,q},n} \eqdist T_{\hat{\mu}^{p,q},n}$ for all $n$. Write $\sigma^{p,q}=(|\hat{\mu}^{p,q}|^2_2-1)^{1/2}=(1-\hat{\mu}^{p,q}_1)^{1/2}$ for the standard deviation of $\hat{\mu}^{p,q}$. Using this equivalence in distribution together with the convergence of the search-depth process for large critical random Bienaym\'e trees \cite[Theorem~23]{MR1207226}, it follows that 
	\[
	\frac{2}{n^{1/2}} \height(T_{\mu^{p,q}},n)\eqdist \frac{2}{n^{1/2}} \height(T_{\hat{\mu}^{p,q}},n) \convdist \frac{1}{\sigma^{p,q}}\sup_{0 \le t \le 1} B(t)\, ,
	\]
	where $B$ is a standard Brownian excursion. Finally, since $\hat{\mu}^{p,q}_1 \to 1$ (and thus $\sigma^{p,q} \to 0$) as either $p \to 0$ or $q \to 0$, and also $\p(\sup_{0 \le t \le 1} B(t)>0)=1$, it follows that 
$(\sigma^{p,q})^{-1} \sup_{0 \le t \le 1} B(t)\to \infty$ in probability as either $p \to 0$ or $q \to 0$. The result follows. 
\end{proof}

In our proof of Theorem \ref{thm.simplygen}, we make use of the following consequence of Theorem \ref{thm.janson}.

\begin{cor}\label{cor.simplygen}
	If $\nu<1$, or if $\hat{\sigma}^2=\infty$, then for each $C>0$ there exists $c=c(\weight,C)>0$ such that for all $n$ sufficiently large, 
	\[\p\left(\sigma_{\sequence{D}(\weight,n)}\leq C\right) \leq e^{-cn}\, .\]
\end{cor}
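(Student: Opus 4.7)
The plan is to bound $\sigma^2_{\sequence{D}(\weight,n)} = \frac{1}{n}\sum_{k\ge 0} k(k-1) N_k$ from below with exponentially high probability, using Theorem~\ref{thm.janson} together with the two hypotheses. I would treat the two hypotheses separately.

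First I would handle the case $\hat{\sigma}^2 = \infty$. Since $\hat{\mu}$ has finite mean $m = 1\wedge \nu$, infinite variance of $\hat{\mu}$ is equivalent to $\sum_{k \ge 0} k(k-1)\hat{\mu}_k = \infty$. Given $C>0$, I would choose $K$ large enough that $\sum_{k=0}^K k(k-1)\hat{\mu}_k > 2C^2$, then apply Theorem~\ref{thm.janson} with $\epsilon := C^2/K^3$ separately for each $k \in \{0,\ldots,K\}$ and take a union bound: for $n$ sufficiently large, with probability at least $1-(K+1)e^{-cn}$ one has $|N_k/n - \hat{\mu}_k| \le \epsilon$ for every $k \le K$. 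On that event,
\[
\sigma^2_{\sequence{D}} \ge \frac{1}{n}\sum_{k=0}^K k(k-1) N_k \ge \sum_{k=0}^K k(k-1)\hat{\mu}_k - \epsilon \sum_{k=0}^K k(k-1) > 2C^2 - C^2 = C^2,
\]
which is what we need.

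Next I would handle the case $\nu < 1$. Here $\tau = \rho$, so $\sum_k k\hat{\mu}_k = \Psi(\rho) = \nu < 1$, whereas deterministically $\sum_k kN_k = n-1$. Thus no matter how $K$ is chosen, a fraction of at least roughly $1-\nu$ of all edge-endpoints must land on vertices of degree exceeding $K$, and this extra degree-mass forces $\sigma^2_{\sequence{D}}$ to be of order at least $K$. Concretely, given $C>0$, pick an integer $K > 2C^2/(1-\nu)$ and apply Theorem~\ref{thm.janson} with $\epsilon := (1-\nu)/(4K(K+1))$, union bounded over $k \le K$. On the resulting high-probability event,
\[
\frac{1}{n}\sum_{k \le K} kN_k \le \sum_{k \le K} k\hat{\mu}_k + \epsilon\sum_{k \le K} k \le \nu + (1-\nu)/4,
\]
and since $\sum_k kN_k = n-1$ this forces $\frac{1}{n}\sum_{k>K} kN_k \ge (1-\nu)/2$ for $n$ large. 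Using $k(k-1) \ge Kk$ when $k > K$ then gives
\[
\sigma^2_{\sequence{D}} \ge \frac{K}{n}\sum_{k > K} kN_k \ge \frac{K(1-\nu)}{2} > C^2.
\]

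There is no real obstacle: Theorem~\ref{thm.janson} packages all the probabilistic input, and in either case one chooses $c=c(\weight,C)$ slightly smaller than the minimum of the constants produced by Theorem~\ref{thm.janson} so as to absorb the polynomial prefactor $(K+1)$. The only mildly interesting point, arising in the $\nu<1$ case, is that the deterministic identity $\sum_k kN_k = n-1$ converts the ``missing first-moment mass'' $1-\nu$ of $\hat{\mu}$ into degree mass on vertices of large degree, and hence into a large contribution to $\sigma^2_{\sequence{D}}$.
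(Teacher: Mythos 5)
Your proof is correct and takes essentially the same route as the paper's: in both cases one applies Theorem~\ref{thm.janson} to the finitely many degrees $k\le K$ and then, either by choosing $K$ so that $\sum_{k\le K}k(k-1)\hat{\mu}_k$ is already large (when $\hat{\sigma}^2=\infty$), or by converting the first-moment deficit $1-\nu$ into degree mass on vertices of degree $>K$ via the identity $\sum_k kN_k=n-1$ (when $\nu<1$), forces $\sigma^2_{\sequence{D}(\weight,n)}$ above $C^2$. Your bookkeeping is slightly more explicit than the paper's (you spell out the union bound over $k\le K$ and the choice of $\epsilon$), but there is no substantive difference in the argument.
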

\begin{proof}
	First suppose that $\nu<1$; in this case $\sum_{i=1}^\infty k \hat{\mu}_k=\nu$. Let $n\geq \tfrac{4}{1-\nu}$. Suppose without loss of generality that $C\geq1$ and set $K=\lceil \tfrac{4C^2}{1-\nu}\rceil$. Then, $\sum_{k=1}^{K}k\hat{\mu}_k\leq \nu,$ so by Theorem  \ref{thm.janson}, there is $c=c(\weight,C)$ such that with $N_k=n_k(\sequence{D}(\sequence{w},n))$, 
	\[ 
	\p\left( \sum_{k=0}^{K} k N_k > \frac{1+\nu}{2} n \right)\leq e^{-cn}
	\]
	for all $n$ sufficiently large.
	But on the event that $\sum_{k=0}^{K} k N_k\leq  \tfrac{1+\nu}{2} n$, using that $\sum_{k=1}^\infty kN_k=n-1$, we have $\sum_{k=K+1}^\infty kN_k\geq \tfrac{1-\nu}{2}n-1\geq \tfrac{1-\nu}{4}n$ by our assumption on $n$. This implies that
	\[\sum_{i=1}^n D_i(D_i-1)\geq \tfrac{1-\nu}{4}Kn\geq C^2 n,\]
 so $\sigma_{\sequence{D}(\weight,n)} > C$. 
	This proves the claim in the case that $\nu < 1$. 
	
	Next suppose that $\hat{\sigma}^2=\infty$. In this case there exists $K\in \N$ such that $\sum_{k=0}^K\hat{\mu}_k k(k-1)>2C$. 
	Since $\sigma_{\sequence{D}(\weight,n)} = n^{-1}\sum_{i=1}^n D_i(D_i-1)\geq n^{-1}\sum_{k=0}^{K} N_k k(k-1)$, Theorem \ref{thm.janson} then implies that there exists $c=c(\weight,C)$ such that 
	\[
	\p\left(\sigma_{\sequence{D}(\weight,n)} \le C\right)\le 
	\p\left( \sum_{k=0}^{K} N_k k(k-1) \leq Cn \right) \leq e^{-cn}\]
	for all $n$ sufficiently large. This proves the claim in the case that $\hat{\sigma}^2 = \infty$.	
\end{proof}
In the next proof we write $x \vee y :=\max(x,y)$. 
\begin{proof}[Proof of Theorem \ref{thm.simplygen}]
	Fix $0<\epsilon\le2^{-{15}}$. Let $\weight$ be a weight sequence with $\nu=1$ and $\hat{\sigma}^2=\infty$, or with $\nu<1$, and fix $n$ such that $Z_n(\weight)>0$. Let $\hat{\mu}_1=\hat{\mu}_1(\weight)$ and let $K$ be large enough such that $2\log(k+1)/k<\epsilon^2$ and such that $\log(k+1) \ge (\tfrac12\log\tfrac{2}{(1-\hat{\mu}_1)})\vee 2^{14}$ 
	for all $k\geq K$. 
Then let 
	\[
	\cD_n = \Big\{\mbox{degree sequences }\sequence{d}=(d_1,\ldots,d_n): 
	n_1(\sequence{d}) \le \frac{n(1+\hat{\mu}_1)}{2},\sigma_{\sequence{d}}\geq K\Big\}.
	\]
	
	By Theorem \ref{thm.janson} and Corollary \ref{cor.simplygen}, there exists $c=c(\weight)>0$ such that for all $n$ sufficiently large, 
	\[ \p(\sequence{D}\not\in \cD_n)\leq e^{-cn}\, .\]
	Moreover, for any $\sequence{d} \in \cD_n$, with $(\sigma')^2 = \sigma_{\sequence{d}}^2n/(n-n_1(\sequence{d}))$ as in Theorem \ref{thm.infinitevariance}, we have $\log(\sigma'+1) \le \log(\sigma_{\sequence{d}}+1)+\tfrac12\log\tfrac{2}{(1-\hat{\mu}_1)}< 2\log(\sigma_{\sequence{d}}+1)$, so $\log(\sigma'+1) /\sigma_{\sequence{d}} \le \eps^2$. Also, $\sigma'\ge \sigma_{\sequence{d}}$, so $\log(\sigma'+1)\geq 2^{14}$.

	Fix any $t\geq \epsilon$. We apply Theorem \ref{thm.infinitevariance} with $x=\epsilon^{-2}t\ge2^{15}$ to obtain that for any $n$ and any $\sequence{d}\in \cD_n$,
	\begin{align*}
	\p\left(\height(\left.\randomtree_{\weight,n})>t n^{1/2}\right|\sequence{D}=\sequence{d}\right)
	& = 
	\p\left(\height(\left.\randomtree_{\weight,n})>x\eps^2 n^{1/2}\right|\sequence{D}=\sequence{d}\right)
	\\
	& \le 
	\p\left(\height(\left.\randomtree_{\weight,n})>x n^{1/2}\frac{\log(\sigma'+1)}{\sigma_{\sequence{d}}}\right|\sequence{D}=\sequence{d}\right)\\
	& = \p\left(\height(\randomtree_{\sequence{d}})> x n^{1/2}\frac{\log(\sigma'+1)}{\sigma_{\sequence{d}}}
	\right) \\
	& < 4\exp(- x\log(\sigma'+1) /2^{14})\le 4\exp(- \epsilon^{-2}t ).
	\end{align*}
		Since
	\[
\p\left(\left.\height(\randomtree_{\sequence{D}})>t n^{1/2}\right|\sequence{D}\in \cD_n\right)
=
\sum_{\dseq \in \cD_n} 
\p\left(\left.\height(\randomtree_n)>t n^{1/2}\right|\sequence{D}=\sequence{d}\right) \p(\sequence{D}=\sequence{d}|\sequence{D} \in \cD_n)\, ,
\]
	it follows that for all $n$ sufficiently large,
	\begin{align*}
	\e\left[\height(\randomtree_{\weight,n})\one_{\{\height(\randomtree_{\weight,n})>\epsilon n^{1/2}\}}\right]&\leq\e\left[\left.\height(\randomtree_{\sequence{D}})\one_{\{\height(\randomtree_{\sequence{D}})>\epsilon n^{1/2}\}}\right|\sequence{D}\in \cD_n  \right]+ n\p(\sequence{D}\not\in \cD_n)\\
		&\leq n^{1/2}\left[\epsilon+\int_\epsilon^\infty \p\left(\left.\height(\randomtree_{\sequence{D}})>tn^{1/2}\right| \sequence{D}\in \cD_n\right)dt\right] +o(1)\\
		&\leq n^{1/2}\left[\epsilon+4\int_\epsilon^\infty \exp(- \epsilon^{-2}t)dt\right] +o(1)\\
		&= n^{1/2}\left[\epsilon+ 4 \epsilon^2\exp(- \epsilon^{-1}) +o(1)\right]\, .
	\end{align*}
	We can pick $\epsilon$ arbitrary small, so the statement follows.
\end{proof}

	\begin{proof}[Proof of Theorem \ref{thm.simplygengaussian}]
		Let $\sequence{D}=\sequence{D}(\weight,n)$ and, as above, write $N_k=n_k(\sequence{D})$. Then, by Theorem \ref{thm.janson}, there is $c=c(\epsilon)>0$ such that for all $n$ sufficiently large,
		\[\p \left( 1-\frac{N_1}{n}< \epsilon/2\right)<e^{-c n}.
		\]
		Let $\cD_n$ be the set of degree sequences $\sequence{d}$ with $\p(\mathrm{D}=\mathrm{d})>0$  such that $1-n_1(\sequence{d})/n\geq \epsilon/2$; so $\p(\sequence{D} \not\in\cD_n) < e^{-cn}$. 
		By Theorem \ref{thm.gaussiantails},
		there are $c_1=c_1(\epsilon)$ and $c_2=c_2(\epsilon)$ such that for any $\sequence{d}\in \cD_n$, for all $t>1$, 
		\[\p\left(\left.\height(\randomtree_n)\ge t n^{1/2}\right|\sequence{D}=\sequence{d}\right)<c_1\exp\left(-c_2 t^{2}\right).\]
		The theorem now follows from the observation that 
		\[\p\left( \height(\randomtree_{\weight,n})\ge tn^{1/2}\right)\leq \p(\sequence{D}\not\in\cD_n)+\p\left(\left.\height(\randomtree_{\sequence{D}})\ge t n^{1/2}\right|\sequence{D}\in \cD_n\right) 
		\]
and the fact that 
\[
\p\left(\left.\height(\randomtree_{\sequence{D}})\ge t n^{1/2}\right|\sequence{D}\in \cD_n\right)
=
\sum_{\dseq \in \cD_n} 
\p\left(\left.\height(\randomtree_n)\ge t n^{1/2}\right|\sequence{D}=\sequence{d}\right) \p(\sequence{D}=\sequence{d}|\sequence{D} \in \cD_n)\, . \qedhere 
\]
	\end{proof}

	\begin{proof}[Proof of Theorems~\ref{thm.bieninfinitevariance} and~\ref{thm.biensubcriticalnoexpdecay}]
	We use the fact that conditioned Bienaym\'e trees are a special case of simply generated trees.
	
	First, if $|\mu|_1 \le 1$ and $|\mu|_2=\infty$ then $\mu_0>0$ and $\mu_k>0$ for some $k \ge 2$. Next, since $|\mu|_2=\infty$, for all $t > 0$ we have $\sum_{k\geq 0} e^{tk}\mu_k=\infty$. 
	This implies that the generating function $\Phi=\Phi_\mu$ has radius of convergence $\rho=\rho_\mu=1$, so $\nu_{\mu} = \Psi(\rho)=\Psi(1)=|\mu|_1 \le 1$. This implies that $\tau=\tau_{\mu}$ defined by \eqref{eq:tau_def} satisfies $\tau=\rho=1$ and so by \eqref{eq:psi_formulas} we have 
	\[
	\hat{\sigma}^2=\hat{\sigma}^2_{\mu}:= \tau \Psi'(\tau) = \Psi'(1) 	= |\mu|_2^2-|\mu|^2_1  = \infty. 
	\]
	Theorem~\ref{thm.simplygen} now implies that $n^{1/2}\height(\randomtree_{\mu,n}) \to 0$ in probability and expectation along integers $n$ such that $\p(|\randomtree_\mu|=n)>0$. This proves Theorem~\ref{thm.bieninfinitevariance}.
	
	Similarly, if $|\mu|_1 < 1$ and $\sum_{k\geq 0} e^{tk}\mu_k=\infty$ for all $t > 0$ then $\mu_0>0$ and $\mu_k>0$ for some $k \ge 2$. Moreover, $\Phi_\mu$ again has radius of convergence $\rho_\mu=1$, and so $\nu =\Psi(1)= |\mu|_1<1$. In this case Theorem~\ref{thm.simplygen} also implies that $n^{-1/2}\height(\randomtree_{\mu,n}) \to 0$ in probability and expectation along integers $n$ such that $\p(|\randomtree_\mu|=n)>0$. This proves Theorem~\ref{thm.biensubcriticalnoexpdecay}. 
	\end{proof}	
	\begin{proof}[Proof of Theorem \ref{thm.biengaussiantails}]	We again use the fact that Bienaym\'e trees are special cases of simply generated trees. We aim to apply Theorem \ref{thm.simplygengaussian}, so proceed to verify the assumptions of that result. The assumptions on $\mu_0$ and $\mu_1$ in particular imply that $\mu_0>0$ and that $\mu_k>0$ for some $k \ge 2$, so that requirement of the theorem is satisfied.

Recall from \eqref{eq:tau_def} that $\tau = \rho$ if $\Psi(\rho) \le 1$, and $\tau=\Psi^{-1}(1)$ if $\Psi(\rho)>1$. With $\hat{\mu}_k = \mu_k\tau^k/\Phi(\tau)$ as in Theorem~\ref{thm.janson}, we then have
\[
\hat{\mu}_1 = \frac{\mu_1\tau}{\Phi(\tau)}
= \frac{\mu_1\tau}{\sum_{k \ge 0} \mu_k\tau^k}.
\]
If $\tau \ge 1$ then the denominator is at least $\mu_1\tau + (1-\mu_1-\mu_0)\tau > \mu_1\tau+\eps\tau$, since by assumption $(1-\mu_1-\mu_0) > \eps$. This yields that 
\[
\hat{\mu}_1 < \frac{\mu_1\tau}{\mu_1\tau + \eps \tau} < \frac{(1-\eps)\tau}{(1-\eps)\tau+\eps \tau} = 1-\eps\, ,
\]
the second inequality holding since $\mu_1< 1-\eps$ and $x/(x+\eps \tau)$ is increasing in $x$. 
On the other hand, if $\tau < 1$ then 
\[
\hat{\mu}_1 = \frac{\mu_1\tau}{\Phi(\tau)} < 
\frac{\mu_1\tau}{\mu_0+\mu_1\tau} < 
\frac{\mu_1}{\mu_0+\mu_1} < 1-\eps\, ,
\]
the last inequality holding by the assumptions of the theorem. In either case we have $\hat{\mu}_1 < 1-\eps$, so the result follows by Theorem \ref{thm.simplygengaussian}.
	\end{proof}
	
\subsection{Block-graphs}\label{sec:blockgraphs}
Theorem \ref{thm.simplygengaussian} implies tail-bounds on the block-diameter of a random graph from a block-stable family and, in particular, we resolve a conjecture from \cite{McDiarmidScott}. We will now introduce these families of random graphs. 

A {\em bridge} in a graph is an edge whose deletion increases the number of connected components. A \emph{block} in a graph is either a maximal $2$-connected subgraph or the subgraph formed by a bridge or an isolated vertex. Call a class $\mathcal{A}$ of graphs \emph{block-stable} when a graph $G$ is in the class if and only if each block of $G$ is in the class. The \emph{block-diameter} of a graph $G$, denoted by $\operatorname{Diam}(G)$, is the greatest number of blocks that any path in $G$ passes through. Let $\cA_n$ denote the subset of $\cA$ that contains all graphs with vertex set  $[n]$ and let $\cA^c_n$ denote the subset of connected graphs in $\cA_n$. Theorem 1.2 in \cite{McDiarmidScott} implies that for any block-stable class $\cA$, for $\mathrm{G}^c_n\in_u \cA^c_n$, we have that $\operatorname{Diam}(\mathrm{G}^c_n)=O(\sqrt{n\log n})$ with high probability (where $n$ runs over all integers for which $\cA_n$ is non-empty). Theorem \ref{thm.simplygengaussian} yields the following improvement to this result.

\begin{thm}\label{thm:blockgraphs}
For any block-stable class $\cA$ there exist constants $c_1=c_1(\cA)$ and $c_2=c_2(\cA)$ such that for any $t>1$ and any $n$ with $\cA_n\neq \emptyset$, for $\mathrm{G}^c_n\in_u \cA^c_n$,
\[
\p\left(\operatorname{Diam}(\mathrm{G}^c_n)\ge tn^{1/2}\right)< c_1\exp(-c_2 t^2).
\]
\end{thm}
Theorem \ref{thm:blockgraphs} follows from Theorem \ref{thm.simplygengaussian} by an observation by Stufler that he states as Corollary 6.45 in \cite{MR4132643}. The proof goes as follows. The coupling between $\mathrm{G}^c_n$ and a particular simply generated tree (that is given in Lemma 6.1 in the survey paper \cite{MR4132643}) preserves the diameter up to a potential additive error of $1$ (where the diameter in a tree is simply the number of edges on the longest path). Therefore, because the diameter of a tree is at most twice its height, tail bounds on $\operatorname{Diam}(\mathrm{G}_n^c)$ follow from tail bounds on the height of the simply generated tree corresponding to $\cA$, so Theorem \ref{thm.simplygengaussian} implies Theorem \ref{thm:blockgraphs} (after we increase $c_1$ to ensure that the result holds for all $n$).

We can use Theorem \ref{thm:blockgraphs} to study the block-diameter of the random (potentially disconnected) graph $\mathrm{G}_n\in_u\cA_n$ by conditioning on its component sizes and applying the result to the individual components, observing that a connected component of $\mathrm{G}_n$ of size $m$ has the same law as $\mathrm{G}^c_m$, up to relabelling of its vertices, and independently of the other components conditioned on their sizes. This yields the following theorem.
\begin{thm}\label{thm:discon_blockgraphs}
For any block-stable class  $\cA$, there exist constants $c_1,c_2>0$ such that for any $t>1$ and any $n$ with $\cA_n\neq \emptyset$, for $\mathrm{G}_n\in_u \cA_n$,
\[
\p\left(\operatorname{Diam}(\mathrm{G}_n)\ge tn^{1/2}\right)< c_1\exp(-c_2 t^2).
\]
\end{thm}
Theorem~\ref{thm:discon_blockgraphs} in particular implies that for any sequence $(a_n,n \ge 1)$ with $a_n\to \infty$, it holds that $\operatorname{Diam}(\mathrm{G}_n)=o(a_n\sqrt{n})$ with high probability; this resolves the conjecture stated just before Theorem 1.2 in \cite{McDiarmidScott}.
\begin{proof}[Proof of Theorem~\ref{thm:discon_blockgraphs}]
We let $c_1=c_1(\cA)$ and $c_2=c_2(\cA)$ be the constants from Theorem~\ref{thm:blockgraphs}. We may assume $c_1 > e^2$ since increasing $c_1$ only weakens the claim.
Therefore, we only need to prove the theorem for $\exp(-c_2 t^2)<e^{-2}$, because otherwise the claimed upper bound exceeds $1$, which is an upper bound for any probability. 

Let $N_1,\dots,N_M$ be the sizes of the connected components of $\mathrm{G}_n$, and observe that
\begin{align*}
\p\left(\operatorname{Diam}(\mathrm{G}_n)\ge tn^{1/2}\right)&=\e\left[\p\left(\operatorname{Diam}(\mathrm{G}_n)\ge tn^{1/2}\mid N_1,\dots,N_M)\right] \right).
\end{align*}
Moreover, for any $m\geq 1$, for any $n_1,\dots,n_m \in \N$ for which $n_1+\dots+ n_m=n$ and $\cA^c_{n_1}, \dots, \cA^c_{n_m} \neq \emptyset$, we see that, for $\mathrm{G}^c_{n_1}, \dots, \mathrm{G}^c_{n_m}$ independent uniformly random samples from $\cA^c_{n_1},\dots, \cA^c_{n_m}$ respectively,
\begin{align*}
& \p\left(\operatorname{Diam}(\mathrm{G}_n)\ge tn^{1/2}\mid M=m, N_1=n_1,\dots,N_m=n_m\right)\\
&=\p\left(\max_{i\in[m]} \operatorname{Diam}(\mathrm{G}^c_{n_i})\ge tn^{1/2}\right)\\
&\leq \sum_{i=1}^m \p\left( \operatorname{Diam}(\mathrm{G}^c_{n_i})\ge (tn^{1/2}n_i^{-1/2}){n_i}^{1/2}\right)\\
&\leq \sum_{i=1}^m \exp\left(-c_2 n n_i^{-1} t^2\right),
\end{align*}
where the last inequality follows from Theorem \ref{thm:blockgraphs}. 
The statement now follows if we show that for $\exp(-c_2 t^2)<e^{-2}$, for all $m\geq 2$ and all $p_1,\dots, p_m\in (0,1)$ for which $p_1+\dots+p_m=1$, it holds that 
\[\sum_{i=1}^m \exp\left(-c_2 p_i^{-1} t^2\right)\leq \exp\left(-c_2 t^2\right).\]
We prove this last statement by induction on $m$. For any $a<e^{-2}$, for $p\in (0,1)$, define $f_a(p)=a^{1/p}+a^{1/(1-p)}$ and for $p\in \{0,1\}$, set $f_a(p)=a$, so that $f_a$ is continuous. It is straightforward to check that $f_a$ is convex on $[0,1]$ for $a<e^{-2}$, so $f_a(p)$ is maximized at $p\in \{0,1\}$ and $f_a(p)\leq a$ for all $p\in (0,1)$. The statement for $m=2$ then follows from this upper bound on $f_a(p)$ with $a=\exp\left(-c_2 t^2\right)$. Now, fix $k\geq 2$ and suppose the statement holds for $m=k$. Then, fix $p_1,\dots, p_{k+1}\in (0,1)$ for which $p_1+\dots+p_{k+1}=1$. Now, we get that 
\[ \exp\left(-c_2 p_{k}^{-1} t^2\right)+\exp\left(-c_2 p_{k+1}^{-1} t^2\right)\leq \exp\left(-c_2 (p_{k}+p_{k+1})^{-1} t^2\right) \] 
by the upper bound on $f_a(p)$ with $a=\exp\left(-c_2 (p_{k}+p_{k+1})^{-1} t^2\right)$ and $p=p_k(p_{k}+p_{k+1})^{-1}$, using that $\exp\left(-c_2 (p_{k}+p_{k+1})^{-1} t^2\right)<\exp\left(-c_2 t^2\right)<e^{-2}$. The result now follows from the induction hypothesis applied to  $p_1,\dots, p_{k-1}, p_k+p_{k+1}$. 
\end{proof}

\section{\bf Stochastic domination results}\label{sec:stochastic}

This section presents the proof of Theorem~\ref{thm.stochasticorder2}. 
The following decomposition is a key input to the proof. 
Given a tree $\tree$, let $f(\tree)$ be the unordered set of rooted trees obtained from $\tree$ by removing all edges from vertices $1$ and $2$ to their children. Also, write $\tree^{12}$ for the tree obtained from $\tree$ by swapping the labels of vertices $1$ and $2$. Then we say that $\tree \sim \tree'$ if either $\tree$ and $\tree'$ have the same root and $f(\tree)=f(\tree')$ or $\tree^{12}$ and $\tree'$ have the same root and $f(\tree^{12})=f(\tree')$. (See Figures \ref{fig.parallel} and \ref{fig.series}) for examples.)

\begin{figure}%
	\begin{subfigure}{0.8\textwidth}
		\centering
		\includegraphics[width=\textwidth]{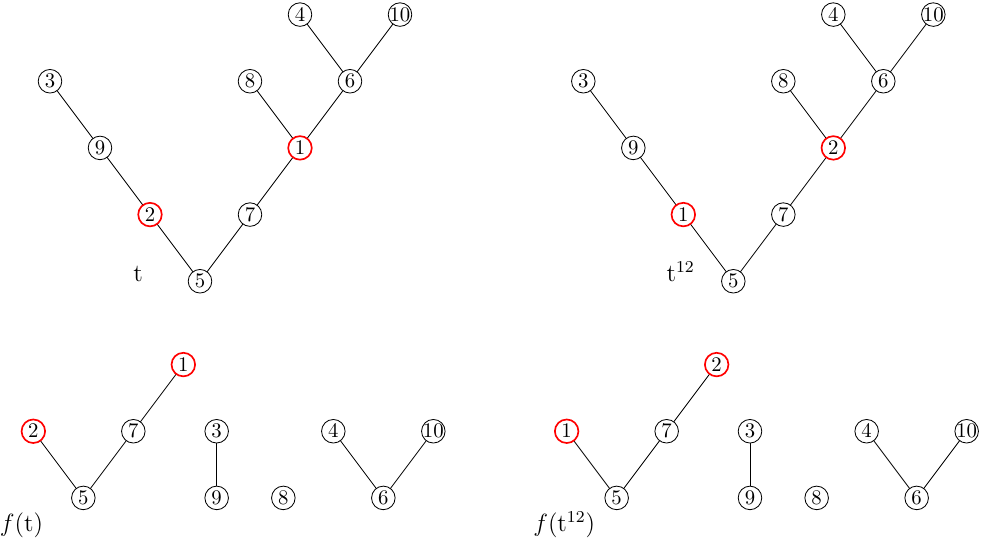}
		\caption{For a tree $\tree$, we obtain $f(\tree)$ by removing the edges to the children of vertex $1$ and $2$. We obtain $\tree^{12}$ by swapping the labels of $1$ and $2$ in $\tree$.}
		\label{fig:parallela}
	\end{subfigure}
	\begin{subfigure}{0.8\textwidth}
		\centering
		\includegraphics[width=0.95\textwidth]{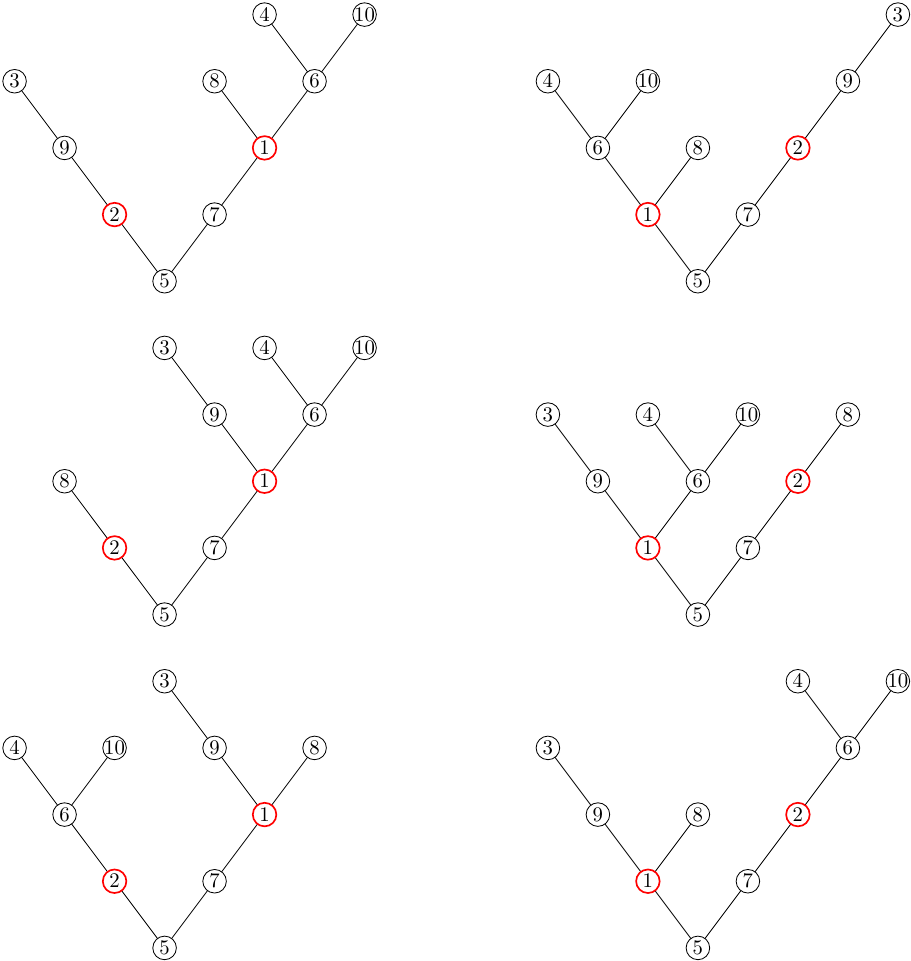}
		\caption{The trees $\tree'$ such that  $\tree'\sim\tree$ and $\deg_{\tree'}(1)=2$ and $\deg_{\tree'}(2)=1$. For each tree $\tree'$ on the left, $f(\tree')=f(\tree)$. For each tree $\tree'$ on the right, $f(\tree')=f(\tree^{12})$.}
		\label{fig:parallelb}
	\end{subfigure}
	\caption{We show a subset of the equivalence class of tree $\tree$. In total, $\tree$ is equivalent to $2^4$ trees: to specify a tree $\tree'$ such that $\tree'\sim \tree$ we must choose, for each element in $\{6,8,9\}$, whether its parent in $\tree'$ is $1$ or $2$, and we must choose whether or not to swap the labels of $1$ and $2$.}
	\label{fig.parallel}
\end{figure}

\begin{figure}%
	\begin{subfigure}{0.8\textwidth}
		\centering
		\includegraphics[width=\textwidth]{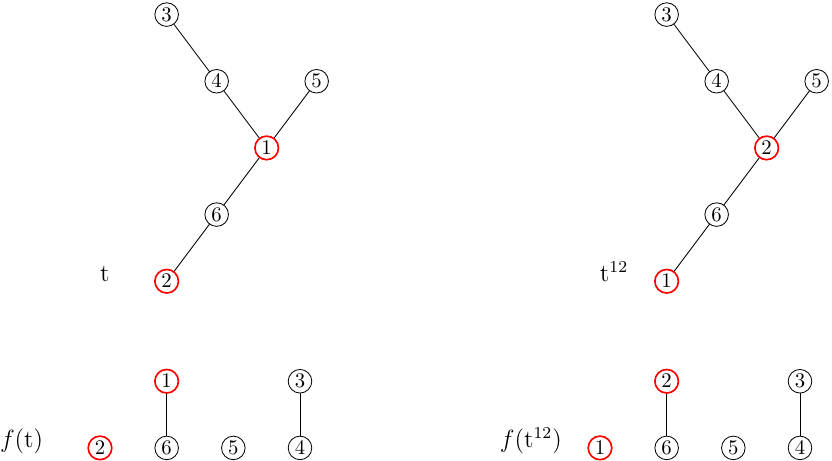}
		\caption{For a tree $\tree$, we obtain $f(\tree)$ by removing the edges to the children of vertex $1$ and $2$. We obtain $\tree^{12}$ by swapping the labels of $1$ and $2$ in $\tree$.}
		\label{fig:seriesa}
	\end{subfigure}
	\begin{subfigure}{0.8\textwidth}
		\centering
		\includegraphics[width=0.9\textwidth]{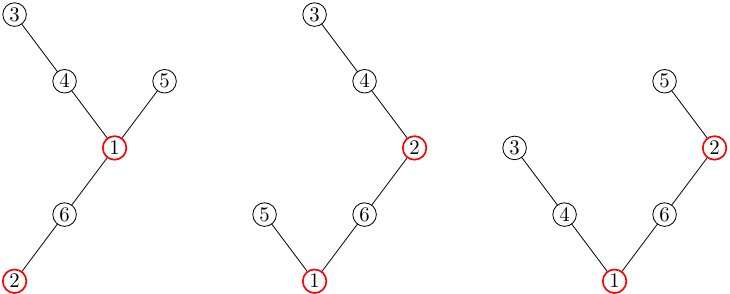}
		\caption{The trees in the equivalence class of $\tree$ in which $d_1=2$ and $d_2=1$. For the leftmost tree $\tree'$, we have that $f(\tree')=f(\tree)$ and $\tree'$ and $\tree$ have the same root. For each other tree  $\tree''$, we have that $f(\tree'')=f(\tree^{12})$ and  $\tree''$ and $\tree^{12}$ have the same root .}
		\label{fig:seriesb}
	\end{subfigure}
	\caption{
	We show a subset of the equivalence class of tree $\tree$. In total, $\tree$ is equivalent to $2^3$ trees: to specify a tree $\tree'$ such that $\tree'\sim \tree$ we must choose, for each element in $\{4,5\}$, whether its parent in $\tree'$ is $1$ or $2$, and we must choose whether or not to swap the labels of $1$ and $2$.}
	\label{fig.series}
\end{figure}

We prove Theorem~\ref{thm.stochasticorder2} using the following two propositions. 
\begin{prop}\label{prop:same_equiv_prob}
Let $\cC$ be an equivalence class for the equivalence relation $\sim$. Fix a degree sequence $\sequence{d}=(d_1,\ldots,d_n)$ with $d_2 \ge 1$ and let $\sequence{d'}=(d_1+1,d_2-1,\ldots,d_n)$. Then with $\randomtree \in_u \cT_\sequence{d}$ and $\randomtree' \in_u \cT_{\sequence{d}'}$, 
\[
\p(\randomtree \in \cC) = \p(\randomtree' \in \cC). 
\]
\end{prop}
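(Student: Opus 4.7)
The plan is to compute $|\cC \cap \cT_{\sequence{d}}|$ and $|\cC \cap \cT_{\sequence{d'}}|$ directly and check that their ratio equals $|\cT_{\sequence{d'}}|/|\cT_{\sequence{d}}| = d_2/(d_1+1)$, the latter being immediate from \eqref{eq:td_size}. To do this, first introduce the finer equivalence $\tree\approx\tree'$, holding when $\tree$ and $\tree'$ have the same root and $f(\tree)=f(\tree')$; then $\cC=[\tree^*]_{\approx}\cup[(\tree^*)^{12}]_{\approx}$ for any fixed $\tree^*\in\cC$.

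Fix such a $\tree^*$ and write $k := d_{\tree^*}(1)+d_{\tree^*}(2)$; any $\tree\in\cC$ satisfies $d_\tree(1)+d_\tree(2)=k$, so both intersections vanish unless $d_1+d_2=k$, which we henceforth assume. Decompose $f(\tree^*)$ into the main component $M^*$ (containing the root) and $k$ rooted subtrees $C_1,\dots,C_k$, each rooted at a former child of $1$ or $2$. An element of $[\tree^*]_{\approx}$ is determined by a reassignment of each $C_i$ as a child of $1$ or $2$, subject to the constraint that $C_i$ may not be assigned to a label that already appears as a vertex of $C_i$. Since $1$ (resp.~$2$) is a vertex of some $C_i$ only when $1$ is a child of $2$ (resp.~$2$ of $1$) in $\tree^*$ --- in which case that $C_i$ is the singleton $\{1\}$ (resp.~$\{2\}$) --- this gives three cases for $\tree^*$: (a) both $1,2\in M^*$, (b) $\{2\}$ is one of the $C_i$, (c) $\{1\}$ is one of the $C_i$. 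The number of reassignments yielding $d_\tree(1)=d_1$ is then $\binom{k}{d_1}$, $\binom{k-1}{d_1-1}$, $\binom{k-1}{d_1}$ in the three cases respectively, with the analogous counts for $\sequence{d'}$ obtained by replacing $d_1$ with $d_1+1$.

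The label swap fixes case (a) and interchanges (b) and (c). In cases (b)/(c), the two $\approx$-classes composing $\cC$ are disjoint because the singleton components of their $f$-forests carry different labels ($\{1\}$ versus $\{2\}$); so by Pascal's identity,
\[
|\cC\cap\cT_{\sequence{d}}|=\binom{k-1}{d_1-1}+\binom{k-1}{d_1}=\binom{k}{d_1},\quad |\cC\cap\cT_{\sequence{d'}}|=\binom{k-1}{d_1}+\binom{k-1}{d_1+1}=\binom{k}{d_1+1}.
\]
In case (a), the two $\approx$-classes are either equal (when the $(1,2)$-swap preserves the root and $f(\tree^*)$) or disjoint; in both situations $|\cC\cap\cT_{\sequence{d}}|\in\{\binom{k}{d_1},2\binom{k}{d_1}\}$, with the parallel statement for $\sequence{d'}$. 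In every case,
\[
\frac{|\cC\cap\cT_{\sequence{d'}}|}{|\cC\cap\cT_{\sequence{d}}|}=\frac{\binom{k}{d_1+1}}{\binom{k}{d_1}}=\frac{k-d_1}{d_1+1}=\frac{d_2}{d_1+1},
\]
which is exactly $|\cT_{\sequence{d'}}|/|\cT_{\sequence{d}}|$, yielding the desired equality of probabilities.

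The main obstacle is to correctly enumerate the reassignments in cases (b) and (c), and to verify that the two $\approx$-classes comprising $\cC$ are disjoint in those cases, so that their separate binomial counts combine via Pascal's identity into the single $\binom{k}{d_1}$ tally that also governs case (a); this is what makes the final ratio independent of the structural type of $\cC$.
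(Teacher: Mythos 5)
You follow the same broad route as the paper: decompose $\cC$ into $[\tree^*]_\approx\cup[(\tree^*)^{12}]_\approx$, count each intersection with $\cT_{\sequence{d}}$ and $\cT_{\sequence{d'}}$ in binomial terms, and compare ratios. Your counts and the final ratio calculation are correct. You are in fact \emph{more} careful than the paper in case (a): you allow for the possibility that $[\tree^*]_\approx=[(\tree^*)^{12}]_\approx$, and this genuinely occurs. For example take $n=5$ and $\tree^*$ with root $3$ having children $1,2$, with $4$ a child of $1$ and $5$ a child of $2$; then $f(\tree^*)$ is $(1,2)$-symmetric, the two $\approx$-classes coincide, and $|\cC\cap\cT_{\sequence{d}}|=\binom{d_1+d_2}{d_1}=2$, not $2\binom{d_1+d_2}{d_1}=4$ as the paper's case (a) asserts. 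As you note, the ratio is preserved either way, so the proposition itself is unharmed, but your treatment repairs a genuine overcounting gap in the published proof.

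There is, however, a local error in your case analysis. The assertion that ``$1$ is a vertex of some $C_i$ only when $1$ is a child of $2$, in which case that $C_i$ is the singleton $\{1\}$'' is false: if $2$ is a proper ancestor of $1$ other than its parent (say $2\to w\to 1$), then $1$ lies in the $C_i$ rooted at $w$, which is not a singleton, and $1$ is not a child of $2$. The correct trichotomy is: (a) both $1,2\in M^*$; (b) $2$ lies in some $C_i$, equivalently $1$ is a proper ancestor of $2$; (c) $1$ lies in some $C_i$. Your binomial counts $\binom{k}{d_1}$, $\binom{k-1}{d_1-1}$, $\binom{k-1}{d_1}$ remain valid under this reinterpretation, since all they use is that in cases (b) and (c) exactly one $C_i$ has a forced attachment. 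Likewise your disjointness justification for (b)/(c), which invokes the alleged singletons, should be replaced: if the root lies in $\{1,2\}$ then $\tree^*$ and $(\tree^*)^{12}$ have different roots, and otherwise the component of the root in $f(\tree^*)$ contains $1$ but not $2$ while in $f((\tree^*)^{12})$ it contains $2$ but not $1$, so the forests differ; either way the two $\approx$-classes are disjoint, and Pascal's identity applies exactly as you intended.
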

\begin{prop}\label{prop:refined_stochastic_bd}
Fix a degree sequence $\sequence{d}=(d_1,\ldots,d_n)$ with $d_1\ge d_2 \ge 1$ and let $\sequence{d'}=(d_1+1,d_2-1,\ldots,d_n)$. 
Then for any $\sim$-equivalence class $\cC$ with $\cC\cap \cT_\sequence{d}\ne \emptyset$, letting $\randomtree_{\cC} \in_u \cT_\sequence{d}\cap \cC$ and $\randomtree_{\cC}' \in_u \cT_{\sequence{d}'}\cap \cC$, we have 
\[
\height(\randomtree_{\cC})' \preceq_{\mathrm{st}} 
\height(\randomtree_{\cC})\, .
\]
Moreover, if $\sequence{d}$ contains at least three non-zero entries then there exists at least one $\sim$-equivalence class $\cC$ for which the preceding stochastic domination is strict. 
\end{prop}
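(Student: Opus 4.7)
The plan is to analyze each $\sim$-equivalence class $\cC$ separately and reduce the height comparison to an explicit binomial-coefficient inequality. I would distinguish two structural cases for a representative tree in $\cC$: the \emph{parallel case}, where neither of vertices $1$, $2$ is an ancestor of the other (so both lie as leaves of the main fragment containing the root), and the \emph{series case}, where one is an ancestor of the other (so the descendant sits in a distinguished pendant fragment $Q$ which must always be attached at the ancestor's position). In either case, trees of $\cC$ are parameterised by a label assignment (which of $1,2$ sits at the ``top'' position $A$, the other at ``bottom'' position $B$, at respective depths $h_A\le h_B$) together with a subset $C_A$ of the ``free'' pendant fragments (those not equal to $Q$) attached at $A$. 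Writing $N=d_1+d_2$ (parallel) or $N=d_1+d_2-1$ (series) and $s_c$ for the height of pendant $c$, the degree constraint forces $|C_A|\in\{d_1,d_2\}$ (parallel) or $\{d_1-1,d_2-1\}$ (series), and the tree's height is
\[
H(C_A)=\max\Bigl(h_{\mathrm{bg}},\ \max_{c\in C_A\cup C_A^*}(h_A+1+s_c),\ \max_{c\in C_B}(h_B+1+s_c)\Bigr),
\]
where $C_B$ is the complement of $C_A$ among free pendants, $C_A^*=\{Q\}$ (series) or $\emptyset$ (parallel), and $h_{\mathrm{bg}}$ accounts for contributions from the main fragment and (in the series case) from $Q$ itself.

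For fixed $t$, set $\alpha=t-h_A-1$, $\beta=t-h_B-1$, and $M=|\{c:\beta<s_c\le\alpha\}|$. Given the $C_A$-independent conditions $h_{\mathrm{bg}}\le t$ and $s_c\le\alpha$ for all pendants including $Q$, the event $\{H(C_A)\le t\}$ is equivalent to $\{c:s_c>\beta\}\subseteq C_A$, giving $\binom{N-M}{k-M}$ valid configurations of size $k$. A direct count, using the symmetry $\binom{N-M}{k-M}=\binom{N-M}{N-k}$ together with $d_1+d_2=N$ (parallel) or $d_1+d_2=N+1$ (series), gives the unified expressions
\[
\p\bigl(\height(\randomtree_{\cC})\le t\bigr)=\frac{\binom{n}{a}+\binom{n}{b}}{D},\qquad \p\bigl(\height(\randomtree_{\cC}')\le t\bigr)=\frac{\binom{n}{a+1}+\binom{n}{b-1}}{D'},
\]
where $n=N-M$, $a=d_1$, $b=d_2$, $D=c\binom{a+b}{a}$, $D'=c\binom{a+b}{a+1}$, with $c=2$ (parallel) or $c=1$ (series). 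Using $\binom{a+b}{a}/\binom{a+b}{a+1}=(a+1)/b$ and then the identities $(k+1)\binom{n}{k+1}=(n-k)\binom{n}{k}$ and $k\binom{n}{k}=(n-k+1)\binom{n}{k-1}$, the desired inequality $\p_{\dseq'}\ge\p_{\dseq}$ collapses cleanly to
\[
(n-a-b)\Bigl(\binom{n}{a}-\binom{n}{b-1}\Bigr)\ge 0.
\]

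Since $n-a-b$ equals $-M$ (parallel) or $-(M+1)$ (series), both $\le 0$, this is equivalent to $\binom{n}{a}\le\binom{n}{b-1}$. The hypothesis $d_1\ge d_2$ gives $a>b-1$, and whenever the first factor is strictly negative one checks that $a+(b-1)\ge n$; the unimodality of $k\mapsto\binom{n}{k}$ then places $b-1$ at least as close to $n/2$ as $a$, giving the bound. For the strict (``moreover'') clause, I would construct a series-case class $\cC$ in which a third non-leaf vertex of $\dseq$ is the root of a free pendant of height $\ge 1$, and pick $t$ so that $M\ge 1$; both factors are then strictly nonzero with matching sign, yielding strict inequality at that $t$. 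The main obstacle is the series case: the two label configurations in $\cC\cap\cT_{\dseq}$ have unequal multiplicities $\binom{N}{d_1-1}\ne\binom{N}{d_2-1}$ (when $d_1\ne d_2$), and the distinguished pendant $Q$ must be tracked throughout. That the bookkeeping nevertheless reduces to the same algebraic inequality as the parallel case is not transparent; it hinges on the symmetry identities $\binom{N-M}{d_1-1-M}=\binom{N-M}{d_2}$ and $\binom{N-M}{d_2-1-M}=\binom{N-M}{d_1}$ which use $N+1=d_1+d_2$ in the series case.
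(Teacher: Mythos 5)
Your proof is correct and uses the same high-level architecture as the paper's: you fix a $\sim$-equivalence class, split into the parallel case (neither of $1,2$ ancestor of the other) and the series case (one is an ancestor), and reduce the height comparison to a statement about the maximum of a pendant height over a random subset of prescribed size. Where you diverge is at the final step. The paper abstracts this into a separate lemma (Lemma~\ref{lem:ei}, the ``eggs-in-one-basket'' lemma), which gives the stochastic domination $\max(a_i, i \in \rA') \preceq_{\mathrm{st}} \max(a_i, i \in \rA)$ for the random subsets appearing in the two cases, with the two cases requiring the two parts of that lemma. You instead compute $\p(\height(\randomtree_{\cC}) \le t)$ and $\p(\height(\randomtree'_{\cC}) \le t)$ explicitly as ratios of binomial-coefficient sums and reduce the comparison, via $(a+1)\binom{n}{a+1}=(n-a)\binom{n}{a}$ and $b\binom{n}{b}=(n-b+1)\binom{n}{b-1}$, to the single algebraic inequality $(n-a-b)\bigl(\binom{n}{a}-\binom{n}{b-1}\bigr)\ge 0$. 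Your computation is, in effect, an in-line proof of the relevant instance of Lemma~\ref{lem:ei}, so neither approach is ``more elementary'' than the other; but your unified formula, with $c\in\{1,2\}$ and the substitutions $n=N-M$, $a=d_1$, $b=d_2$, does have the genuine advantage of handling both the parallel and series cases by a single calculation, whereas the paper must apply (and prove) the two parts of its lemma separately. I verified the binomial bookkeeping, including the series-case symmetries $\binom{N-M}{d_1-1-M}=\binom{N-M}{d_2}$, $\binom{N}{d_1-1}+\binom{N}{d_2-1}=\binom{d_1+d_2}{d_1}$ via Pascal, and the sign analysis, and it is all correct. Two small remarks. First, your phrase ``this is equivalent to $\binom{n}{a}\le\binom{n}{b-1}$'' is slightly imprecise: when $M=0$ in the parallel case the first factor vanishes and the product is $0\ge 0$ regardless (and in that case $\binom{n}{a}>\binom{n}{b-1}$ in general); your subsequent sentence restricted to the strictly-negative case handles this correctly, but the word ``equivalent'' should be softened. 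Second, like the paper you tacitly assume $d_1 \ge d_2$ (needed to conclude $a>b-1$); this hypothesis is present in the proof of Theorem~\ref{thm.stochasticorder2} even though the proposition's statement omits it. Your strictness argument, choosing a series-type class with a free pendant of height $\ge 1$ and picking $t$ to make $M\ge 1$, is the same idea as the paper's explicit construction with $3$ a child of $2$ a child of the root $1$, just phrased in your notation.
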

Before proving the propositions, we show how they straightforwardly imply Theorem~\ref{thm.stochasticorder2}. 
\begin{proof}[Proof of Theorem~\ref{thm.stochasticorder2}]
First, by relabelling the vertices it suffices to show that for any degree sequence $\sequence{d}=(d_1,\ldots,d_n)$ with $d_1 \ge d_2 \ge 1$, if $\sequence{d}'=(d_1+1,d_2-1,d_3,\ldots,d_n)$ then for $\randomtree \in_u \cT_{\sequence{d}}$ and $\randomtree' \in_u \cT_{\sequence{d}'}$ we have $\height(\randomtree') \preceq_{\mathrm{st}} \height(\randomtree)$, and that this stochastic domination is strict if $\sequence{d}$ contains at least three non-zero entries. 

Fix degree sequences $\sequence{d}$ and $\sequence{d'}$ related as in the previous paragraph, and let $\randomtree \in_u \cT_{\sequence{d}}$ and $\randomtree' \in_u \cT_{\sequence{d}'}$. For a $\sim$-equivalence class $\cC$ such that $\cT_{\sequence{d}}\cap\cC\neq \emptyset$ we will also use the notation $\randomtree_{\cC}$ and $\randomtree'_{\cC}$ to denote uniformly random elements of $\cT_{\sequence{d}}\cap \cC$ and $\cT_{\sequence{d}'}\cap \cC$, respectively. 

For any $x > 0$, writing $\sum_{\cC}$ to denote a summation over all $\sim$-equivalence classes $\cC$, we now have 
\begin{align*}
\p(\height(\randomtree) \le x)
&  = \sum_{\cC} \probC{\height(\randomtree) \le x}{\randomtree \in \cC}\p(\randomtree \in \cC) \\
& = \sum_{\cC} \p(\height(\randomtree_{\cC}) \le x)\p(\randomtree' \in \cC)\\
& \le \sum_{\cC} \p(\height(\randomtree_{\cC}') \le x)\p(\randomtree' \in \cC)\\
& = \sum_{\cC} \probC{\height(\randomtree') \le x}{\randomtree' \in \cC}\p(\randomtree' \in \cC)\\
& = \p(\height(\randomtree') \le x)
\end{align*}
The second equality holds by 
Proposition~\ref{prop:same_equiv_prob} and since the conditional distribution of $\randomtree$ given that $\randomtree \in \cC$ is precisely that of $\randomtree_{\cC}$. The inequality holds by Proposition~\ref{prop:refined_stochastic_bd}. The third equality again holds since the conditional distribution of $\randomtree'$ given that $\randomtree' \in \cC$ is precisely that of $\randomtree'_{\cC}$. This shows that $\height(\randomtree')\preceq_{\mathrm{st}}\height(\randomtree)$. 

Finally, if $\sequence{d}$ has at least three non-zero entries then by Proposition~\ref{prop:refined_stochastic_bd} there exists at least one equivalence class $\cC$ and some $x > 0$ for which $\p(\height(\randomtree_{\cC}) < x) < \p(\height(\randomtree'_{\cC}) < x)$. For such $x$ the above chain of inequalities then yields that $\p(\height(\randomtree) \le x) < \p(\height(\randomtree') \le x)$, so in this case in fact $\height(\randomtree')\prec_{\mathrm{st}}\height(\randomtree)$. 
\end{proof}

\begin{proof}[Proof of Proposition~\ref{prop:same_equiv_prob}]
Let $\tree$ be a tree in $\cC \cap \cT_{\sequence{d}}$.  
We first suppose that neither vertex $1$ nor vertex $2$ is an ancestor of the other in $\tree$.

The forest $f(\tree)$ contains $d_1+d_2+1$ trees; list their roots as $r_1,\ldots,r_{d_1+d_2+1}$ so that $r_{d_1+d_2+1}$ is the root of $\tree$. Then both $1$ and $2$ lie in the tree rooted at $r_{d_1+d_2+1}$. In this case, there are ${d_1+d_2 \choose d_1}$ trees $\hat{\tree} \in\cC \cap \cT_{\sequence{d}}$ with $f(\hat{\tree})=f(\tree)$: these are precisely the trees obtained from $f(\tree)$ as follows. 
\begin{itemize}
\item Select a set $S \subset [d_1+d_2]$ of size $d_1$.
\item Add edges from vertex $1$ to the vertices in the set $\{r_i,i \in S\}$, and add edges from vertex $2$ to the vertices in the set $\{r_i,i \in [d_1+d_2]\setminus S\}$. 
\end{itemize}
Likewise, there are ${d_1+d_2 \choose d_2}$ trees $\hat{\tree} \in \cC \cap \cT_{\sequence{d}}$ with $f(\hat{\tree})=f(\tree^{12})$; these are obtained from $f(\tree)$ as follows.
\begin{itemize}
\item Select a set $S \subset [d_1+d_2]$ of size $d_2$.
\item Add edges from vertex $1$ to the vertices in the set $\{r_i,i \in S\}$, add edges from vertex $2$ to the vertices in the set $\{r_i,i \in [d_1+d_2]\setminus S\}$, then swap the labels of vertices $1$ and $2$. 
\end{itemize}
It follows that $|\cC \cap \cT_d| = 2{d_1+d_2 \choose d_1}$. Likewise, we have $|\cC \cap \cT_{\sequence{d}'}| = 2{d_1+d_2 \choose d_1+1}$, since any element of $\cC \cap \cT_{\sequence{d}'}$ may be constructed by selecting a size-$(d_1+1)$ subset $S'$ of $\{r_i, i \in[d_1+d_2]\}$, then either (a) attaching the roots in $S'$ to $1$ and the remaining roots to $2$, or (b) attaching the roots in $S'$ to $2$ and the remaining roots to $1$ and switching the labels of vertices $1$ and $2$. 

Recalling the formula (\ref{eq:td_size}) for $|\cT_{\sequence{d}}|$, 
the preceding computations yield that 
\small
\[
\frac{|\cC\cap \cT_{\sequence{d}} |}{|\cT_{\sequence{d}} |} = 2{d_1+d_2 \choose d_1}\!{n-1 \choose d_1,\ldots,d_n}^{-1} 
\!=2{d_1+d_2 \choose d_1+1}\!{n-1 \choose d_1+1,d_2-1,d_3,\ldots,d_n}^{-1} 
\!=\frac{|\cC\cap\cT_{\sequence{d}'}|}{|\cT_{\sequence{d}'}|}\, ,
\]
\normalsize
so in this case $\p(\randomtree \in \cC)=\p(\randomtree' \in \cC)$, as required. 

Next suppose that vertex $1$ is an ancestor of vertex $2$ in $\tree$.  
The forest $f(\tree)$ contains $d_1+d_2+1$ trees; list their roots as $r_1,\ldots,r_{d_1+d_2+1}$ so that $r_{d_1+d_2}$ and $r_{d_1+d_2+1}$ are the roots of the trees containing vertices $2$ and $1$, respectively. (This also means that $r_{d_1+d_2+1}$ is the root of $\tree$, and that $r_{d_1+d_2}$ is a child of $1$ in $\tree$.) 
In this case there are ${d_1+d_2-1 \choose d_2}$ trees $\hat{\tree} \in \cC \cap \cT_{\sequence{d}}$ with $f(\hat{\tree})=f(\tree)$: these are precisely the trees obtained from $f(\tree)$ as follows. 
\begin{itemize}
\item Select a set $S \subset [d_1+d_2-1]$ of size $d_2$. 
\item Add edges from vertex $2$ to the vertices in the set $\{r_i,i \in S\}$, and add edges from vertex $1$ to the vertices in the set $\{r_i,i \in [d_1+d_2]\setminus S\}$. 
\end{itemize}
Similarly, there are ${d_1+d_2-1 \choose d_1}$ trees $\hat{\tree} \in \cC \cap \cT_{\sequence{d}}$ with $f(\hat{\tree})=f(\tree^{12})$; these are obtained from $f(\tree)$ as follows. 
\begin{itemize}
\item Select a set $S \subset [d_1+d_2-1]$ of size $d_1$.
\item Add edges from vertex $2$ to the vertices in the set $\{r_i,i \in S\}$, add edges from vertex $1$ to the vertices in the set $\{r_i, i \in[d_1+d_2]\setminus S\}$, then swap the labels of vertices $1$ and $2$. 
\end{itemize}
It follows that 
\[
|\cC \cap \cT_{\sequence{d}}| 
= 
{d_1+d_2-1 \choose d_2}
+{d_1+d_2-1 \choose d_1}
= {d_1+d_2 \choose d_1}\, ,
\]
and likewise $|\cC \cap \cT_{\sequence{d}'}| = {d_1+d_2 \choose d_1+1}$. (We omit the details for this last identity as they are so similar to the previous arguments.) It follows that in this case we also have $|\cC \cap \cT_{\sequence{d}}|/|\cT_{\sequence{d}}|=|\cC \cap \cT_{\sequence{d}'}|/|\cT_{\sequence{d}'}|$, so again $\p(\randomtree \in \cC)=\p(\randomtree' \in \cC)$. 

Finally, if vertex $2$ is an ancestor of vertex $1$ in $\tree$, then in $\tree^{12}$ vertex $1$ is an ancestor of vertex $2$, so since $\tree \sim \tree^{12}$, this situation is already handled by the previous case. 
\end{proof}
For the proof of Proposition~\ref{prop:refined_stochastic_bd} we require an additional lemma, which although fairly straightforward we find independently pleasing. Write ${[n] \choose k}=\{S \subset [n]: |S|=k\}$. Below we use the convention that $\max \emptyset = 0$. 
\begin{lem}[Eggs-in-one-basket lemma]\label{lem:ei}
Fix non-negative real numbers $0 < a_1 \le \ldots \le a_n$ and integers $k,\ell$ with  $n/2 \le k < \ell \le n$. 
\begin{enumerate}
\item Let $\rA \in_u {[n] \choose k} \cup {[n] \choose n-k}$ and $\rA' \in_u {[n] \choose \ell} \cup {[n] \choose n-\ell}$. Then $\max(a_i,i \in \rA') \preceq_{\mathrm{st}}\max(a_i,i \in \rA)$.
\item Let $\rB \in_u {[n-1] \choose k} \cup {[n-1] \choose n-k}$ and $\rB' \in_u {[n-1] \choose \ell} \cup {[n-1] \choose n-\ell}$. Then $\max(a_i,i \in \rB') \preceq_{\mathrm{st}} \max(a_i,i \in \rB)$.
\end{enumerate}
\end{lem}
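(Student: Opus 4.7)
The plan is to reduce both parts to a single combinatorial inequality governed by discrete convexity. Since $a_1\le\cdots\le a_n$, for any $t\ge 0$ the event $\{\max(a_i,i\in\rA)\le t\}$ coincides with $\{\rA\cap\{j+1,\ldots,n\}=\emptyset\}$ for a suitable $j$ depending on $t$, so to prove (1) it suffices to show that for every $j\in\{0,1,\ldots,n\}$,
\[
\p\bigl(\rA\cap\{j+1,\ldots,n\}=\emptyset\bigr)\le\p\bigl(\rA'\cap\{j+1,\ldots,n\}=\emptyset\bigr).
\]

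Writing $m=n-j$, I would use the symmetric identity $\binom{n-m}{r}/\binom{n}{r}=\binom{n-r}{m}/\binom{n}{m}$ to evaluate each side in closed form. The left-hand side becomes $\tfrac12\binom{n}{m}^{-1}\bigl(\binom{k}{m}+\binom{n-k}{m}\bigr)$ and the right-hand side equals $\tfrac12\binom{n}{m}^{-1}\bigl(\binom{\ell}{m}+\binom{n-\ell}{m}\bigr)$ (the averaging also gives the correct formula when $k=n-k$, since the two collections coincide), so the problem reduces to showing
\[
\binom{k}{m}+\binom{n-k}{m}\le\binom{\ell}{m}+\binom{n-\ell}{m}.
\]

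The key observation is that $x\mapsto\binom{x}{m}$ is discretely convex on $\{0,1,2,\ldots\}$, since $\binom{x+1}{m}-2\binom{x}{m}+\binom{x-1}{m}=\binom{x-1}{m-2}\ge 0$ (with the standard convention that $\binom{y}{p}=0$ for $y<p$). Because $(n-k)+k=(n-\ell)+\ell=n$ and $n-\ell<n-k\le k<\ell$ (using $n/2\le k<\ell$), the pair $(n-\ell,\ell)$ majorizes $(n-k,k)$. Schur-convexity of $(x,y)\mapsto f(x)+f(y)$ for convex $f$---which follows by iterated one-step moves of the form $(a,b)\mapsto(a-1,b+1)$ with $a\le b$, each of which only increases the sum by convexity---then yields the desired inequality.

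For part (2), the same argument applies verbatim with $[n]$ replaced by $[n-1]$: the event in question is $\rB\cap\{j+1,\ldots,n-1\}=\emptyset$, the reduced inequality becomes $\binom{k}{m}+\binom{n-1-k}{m}\le\binom{\ell}{m}+\binom{n-1-\ell}{m}$ with $m=n-1-j$, and the majorization $(n-1-\ell,\ell)\succ(n-1-k,k)$ still holds because $k\ge n/2>(n-1)/2$. The main obstacle is little more than bookkeeping around boundary cases (the values $m\in\{0,1\}$ where $\binom{x}{m}$ is affine, or $\ell\in\{n-1,n\}$ in part (2) which makes one of the collections degenerate), each of which either yields equality or is trivially satisfied.
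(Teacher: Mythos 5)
Your proof of part~(1) is correct, and it is essentially the paper's argument phrased more conceptually: the paper reduces to the covering case $\ell = k+1$ and verifies the needed binomial inequality by the addition rule, whereas you handle general $\ell$ at once via discrete convexity of $x \mapsto \binom{x}{m}$ and Schur-convexity under majorization. The reparametrization via the symmetric identity (so the inequality becomes $\binom{k}{m} + \binom{n-k}{m} \le \binom{\ell}{m} + \binom{n-\ell}{m}$) is a clean touch. This buys a slightly more transparent argument and matches the paper on substance.

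Part~(2), however, has a genuine gap. The sets $\rB$ are drawn from $\binom{[n-1]}{k} \cup \binom{[n-1]}{n-k}$, and the sizes $k$ and $n-k$ are complementary with respect to $n$, \emph{not} with respect to $n-1$. So the "verbatim with $[n]$ replaced by $[n-1]$" substitution breaks in two ways: first, $\binom{n-1}{k} \neq \binom{n-1}{n-k}$ in general, so the two pieces of the union no longer have equal size and the factor $\tfrac12$ is wrong; second, applying the symmetric identity with ground size $n-1$ to the $(n-k)$-subsets gives $\binom{(n-1)-(n-k)}{m} = \binom{k-1}{m}$, not $\binom{k}{m}$. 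Concretely, for $n=4$, $k=2$, $j=2$ (so your $m=1$), the true value is $\p(\rB \subseteq [2]) = \tfrac13$, while your formula $\tfrac{1}{2\binom{3}{1}}\bigl(\binom{2}{1}+\binom{1}{1}\bigr) = \tfrac12$. The inequality you then invoke, $\binom{k}{m}+\binom{n-1-k}{m}\le\binom{\ell}{m}+\binom{n-1-\ell}{m}$, is a true inequality, but it is not the one controlling $\p(\rB \subseteq [j])$, so it does not prove part~(2). The paper avoids this altogether by observing that $\rB$ has the law of $\rA$ conditioned on $\{n \notin \rA\}$, an event of probability exactly $\tfrac12$, which reduces part~(2) to part~(1) in two lines. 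Alternatively, one can repair your direct computation by noting $\binom{n-1}{k}+\binom{n-1}{n-k}=\binom{n}{k}$ (Pascal), which shows the correct reduced inequality for part~(2) is the \emph{same} one as part~(1), namely $\binom{k}{n-j}+\binom{n-k}{n-j}\le\binom{\ell}{n-j}+\binom{n-\ell}{n-j}$; but as written, your formulas are incorrect.
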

The proverb ``don't put all your eggs in one basket'' means ``don't put all your resources into a single endeavour'' or, more pithily, ``diversify your portfolio''. (Its origins are obscure but an Italian equivalent, ``non mettere tutte le uova in un solo cesto'', has been traced to at least 1666 \cite{uova}.) To understand our use of this phrase, note that if the ``portfolio'' is the random set $\rA$ or the set $\rB$ from the lemma, and the payoff of a portfolio is the value of its largest element, then the lemma implies that larger-entropy portfolios have stochastically higher payoffs. To our knowledge, this lemma is the first rigorous proof of the wisdom of the proverb. 
\begin{proof}[Proof of Lemma~\ref{lem:ei}]
If $a_1,\ldots,a_\ell$ are all equal then the result is obvious so we hereafter assume that this is not the case. 
It suffices to prove the lemma with $\ell=k+1$; the general case follows by induction. It is useful to set $a_0=0$. Then for any $0 \le i < n$ and $a_i < x \le a_{i+1}$, 
		\begin{align*}\p\left(\max\{a_j:j\in \mathrm{A}\}<x  \right) &=\p\left(\mathrm{A}\cap\{i+1,\dots,n\}=\emptyset \right)\\
			&=\frac{1}{2\binom{n}{k}}\left[ \binom{i}{k}+\binom{i}{n-k}\right].
\end{align*}
To prove the first claim of the lemma, that $\max(a_i,i \in \rA') \preceq_{\mathrm{st}}\max(a_i,i \in \rA)$, it thus suffices to show that 
		\[ 
		\frac{1}{2\binom{n}{k}}\left[ \binom{i}{k}+\binom{i}{n-k}\right]\le  \frac{1}{2\binom{n}{k+1}}\left[ \binom{i}{k+1}+\binom{i}{n-k-1}\right] .
		\]
	It is possible that some of the binomial coefficients above are zero; regardless, 
multiplying through by $2\binom{n}{n-i}$ and rearranging terms yields that this is equivalent to showing that
 \[ \binom{n-k}{n-i}-\binom{n-k-1}{n-i}\le \binom{k+1}{n-i}-\binom{k}{n-i},\]
 which by the addition rule of binomials reduces to
\[ \binom{n-k-1}{n-i-1}\le \binom{k}{n-i-1}.\]
This holds, because $k>n-k-1$. 

For the second claim of the lemma, 
note that $\mathrm{B}$ has the law of $\mathrm{A}$ conditional on the event $\{n \not\in  \mathrm{A}\}$, which, by symmetry, has probability $1/2$. This implies that for any $x\leq a_n$, 
\begin{align*}
\p\left( \max\{a_i:i\in \mathrm{A}\}<x\right)&=\p\left(n\not\in \mathrm{A}\right)\p\left(\left.  \max\{a_i:i\in \mathrm{A}\}<x\right| n \not\in  \mathrm{A}\right)\\
&= \frac{1}{2}\p\left( \max\{a_i:i\in \mathrm{B}\}<x\right),
\end{align*}
 and similarly, 
 \[\p\left( \max\{a_i:i\in \mathrm{A}'\}<x\right)=\frac{1}{2}\p\left(\max\{a_i:i\in \mathrm{B}'\}<x\right).\]
 Therefore,
 \begin{align*}
&  \p\left( \max\{a_i:i\in \mathrm{A}\}<x\right)-\p\left( \max\{a_i:i\in \mathrm{A}'\}<x\right)\\
&  =\frac12\left(\p\left( \max\{a_i:i\in \mathrm{B}\}<x\right)-\p\left( \max\{a_i:i\in \mathrm{B}'\}<x\right)\right)\, ,
 \end{align*}
so the second claim of the lemma follows from the first. 
\end{proof}
\begin{proof}[Proof of Proposition~\ref{prop:refined_stochastic_bd}]
Fix a $\sim$-equivalence class $\cC$ and a tree $\tree \in \cC \cap \cT_\sequence{d}$. We first suppose that neither vertex $1$ nor vertex $2$ is an ancestor of the other in $\tree$. The forest $f(\tree)$ contains $d_1+d_2+1$ trees; list their roots as $r_1,\ldots,r_{d_1+d_2+1}$ so that $r_{d_1+d_2+1}$ is the root of $\tree$, and let $\tree_i$ be the tree with root $r_i$. 
Then both $1$ and $2$ lie in the tree $\tree_{d_1+d_2+1}$ rooted at $r_{d_1+d_2+1}$. Write $h_1$ and $h_2$ for the distance from $r_{d_1+d_2+1}$ to $1$ and $2$, respectively. 

By the definition of $\cC$, 
starting from $f(\tree)$ we may sample $\randomtree_{\cC} \in_u \cC \cap \cT_\sequence{d}$ as follows. 
\begin{itemize}
\item Let $(\rC,\rA) \in_u \big\{(1,S): S \in {[d_1+d_2] \choose d_1}\big\} \cup \big\{(2,S): S \in {[d_1+d_2] \choose d_2}\big\}$. 
\item Add edges from vertex $1$ to the roots $\{r_i,i \in \rA\}$ and from vertex $2$ to the roots $\{r_i,i \in [d_1+d_2]\setminus A\}$. 
\item If $\rC=2$ then swap the labels of vertices $1$ and $2$. 
\end{itemize}
Note that $\rA \in_u {[d_1+d_2]\choose d_1} \cup {[d_1+d_2] \choose d_2}$. 
For $1 \le i \le d_1+d_2$ letting $a_i=1+\height(\tree_i)$, with the above construction of $\randomtree_{\cC}$ we then have 
\[
\height(\randomtree_{\cC})
=\max(\height(\tree_{d_1+d_2+1}),h_1+\max(a_i,i \in \rA),h_2+\max(a_i,i \in [d_1+d_2]\setminus \rA))\, .
\]

Next, again starting from $f(\tree)$, apply the same procedure (with $d_1$ and $d_2$ replaced by $d_1+1$ and $d_2-1$, respectively) to sample $\randomtree_{\cC}' \in_u \cC \cap \cT_\sequence{d}$. We obtain 
\[
\height(\randomtree_{\cC}')
=\max(\height(\tree_{d_1+d_2+1}),h_1+\max(a_i,i \in \rA'),h_2+\max(a_i,i \in [d_1+d_2]\setminus \rA'))\, .
\]
where $\rA' \in_u {[d_1+d_2]\choose d_1+1} \cup {[d_1+d_2] \choose d_2-1}$.

Since $\rA$ and $[d_1+d_2]\setminus \rA$ have the same distribution, as do $\rA'$ and $[d_1+d_2]\setminus \rA'$, we may assume without loss of generality that $h_1 \ge h_2$. It then follows that both the above maxima are at least $M^-:= \max(\height(\tree_{d_1+d_2+1}),h_2+\max(a_i,i \in [d_1+d_2]))$ and at most $M^+:= \max(\height(\tree_{d_1+d_2+1}),h_1+\max(a_i,i \in [d_1+d_2]))$
, so for $x \le M^-$ we have 
\[
\p(\height(\randomtree_{\cC}) < x)=0=\p(\height(\randomtree_{\cC}') < x)
\]
while for $x > M^+$ we have 
\[
\p(\height(\randomtree_{\cC}) < x)
=1=\p(\height(\randomtree_{\cC}') < x).
\]
For $M^- < x \le M^+$, we have 
\[
\p(\height(\randomtree_{\cC})< x)
=
\p(h_1+\max(a_i,i \in \rA) < x)
\]
and 
\[
\p(\height(\randomtree_{\cC}')< x)
=
\p(h_1+\max(a_i,i \in \rA') < x)\, ,
\]
so the first part of the eggs-in-one-basket lemma yields that 
\[
\p(\height(\randomtree_{\cC})< x)
\le 
\p(\height(\randomtree_{\cC}')< x). 
\]
This establishes that $\height(\randomtree_{\cC}') \preceq_{\mathrm{st}} \height(\randomtree_{\cC})$ when neither $1$ nor $2$ is an ancestor of the other for trees in $\cC$. 

We next suppose that either $1$ is an ancestor of $2$ in $\tree$ or vice-versa. Note that $\cC \cap \cT_\sequence{d}$ contains a tree in which $1$ is an ancestor of $2$ if and only if it contains a tree in which $2$ is an ancestor of $1$. It follows that, by replacing $\tree$ by another element of $\cC \cap \cT_\sequence{d}$ if necessary, we may assume that in fact $1$ is an ancestor of $2$ in $\tree$. 
 
List the roots of the trees in $f(\tree)$ as $r_1,\ldots,r_{d_1+d_2+1}$ so that $r_{d_1+d_2}$ and $r_{d_1+d_2+1}$ are the roots of the trees containing vertices $2$ and $1$, respectively, and write $\tree_i$ for the tree of $f(\tree)$ with root $r_i$. Necessarily $r_{d_1+d_2+1}$ is also the root of $\tree$, and $r_{d_1+d_2}$ is a child of $1$ in $\tree$. Write $h_1$ and $h_2$ for the distance from $r_{d_1+d_2+1}$ to $1$ and from $r_{d_1+d_2}$ to $2$, respectively. 

By the definition of the equivalence class $\cC$, 
starting from the forest $\tree_1,\ldots,\tree_{d_1+d_2+1}$, we may sample $\randomtree_{\cC} \in_u \cC \cap \cT_\sequence{d}$ as follows.
\begin{itemize}
\item Let 
\[
(\rC,\rB) \in_u \Big\{(1,S):S \in {[d_1+d_2-1] \choose d_2}\Big\} \cup \Big\{(2,S): S \in {[d_1+d_2-1] \choose d_1}: \Big\}\,. 
\]
\item Add edges from vertex $2$ to the roots $\{r_i,i \in \rB\}$ and from vertex $1$ to the roots $\{r_i,i \in [d_1+d_2]\setminus \rB\}$. 
\item If $\rC=2$ then swap the labels of vertices $1$ and $2$. 
\end{itemize}

Note that $\rB \in_u \{S \subset [d_1+d_2-1]: |S| \in \{d_1,d_2\}\}$. 
Moreover, letting $a_i=1+\height(\tree_i)$ for $ i \in [d_1+d_2-1]$, and letting 
$H=\max(\height(\tree_{d_1+d_2+1}),h_1+1+\height(\tree_{d_1+d_2}))$, 
with the above construction of $\randomtree_{\cC}$ we then have 
\[
\height(\randomtree_{\cC})
=\max(H,
h_1+\max(a_i,i \in [d_1+d_2-1]\setminus\rB),h_1+1+h_2+\max(a_i,i \in \rB))\, .
\]
The term $H$ accounts for the possibility that the height of $\randomtree_\cC$ is achieved by a vertex of either $\tree_{d_1+d_2}$ or $\tree_{d_1+d_2+1}$. 

Next, again starting from $f(\tree)$, apply the same procedure (with $d_1$ and $d_2$ replaced by $d_1+1$ and $d_2-1$, respectively) to sample $\randomtree_{\cC}' \in_u \cC \cap \cT_\sequence{d}$. We obtain 
\[
\height(\randomtree_{\cC})
=\max(H,h_1+\max(a_i,i \in [d_1+d_2-1]\setminus \rB'),h_1+1+h_2+\max(a_i,i \in \rB'))\, .
\]
where $\rB' \in_u \{S \subset [d_1+d_2-1]: |S| \in \{d_1+1,d_2-1\}\}$.

The heights of $\randomtree_\cC$ and $\randomtree'_\cC$ both lie between 
\[
M^- := \max(H, h_1+\max(a_i,i \in [d_1+d_2-1]))
\]
and 
\[
M^+ := \max(H, h_1+1+h_2+\max(a_i,i \in [d_1+d_2-1]))
\]
so for $x \not\in (M^-,M^+]$ we have $\p(\height(\randomtree_C) < x) = \p(\height(\randomtree'_C) < x)$. 
For $M^- < x \le M^+$, 
we have $\height(\randomtree_C) < x$ if and only if 
$h_1+1+h_2+\max(a_i,i \in \rB) < x$, and likewise $\height(\randomtree_C') < x$ if and only if 
$h_1+1+h_2+\max(a_i,i \in \rB')<x$. It thus follows by the second part of the eggs-in-one-basket lemma that 
\[
\p(\height(\randomtree_\cC) < x) \le 
\p(\height(\randomtree_\cC') < x). 
\]
This establishes that $\height(\randomtree_{\cC}') \preceq_{\mathrm{st}} \height(\randomtree_{\cC})$ when $1$ is an ancestor of $2$ in $\tree$. (As already noted, this also handles the case where $2$ is an ancestor of $1$.) 

It remains to establish strict stochastic inequality when there are at least three non-leaf vertices. We accomplish this by showing that in this case there exists $\tree \in \cT_\sequence{d}$ such that for $\cC$ the $\sim$-equivalence class of $\tree$, if $\randomtree_{\cC} \in_u \cC \cap \cT_\sequence{d}$ and $\randomtree_{\cC}' \in_u \cT_\sequence{d'}$ then $\height(\randomtree_\cC') \prec_{\mathrm{st}} \height(\randomtree_\cC)$. 

By relabelling we may assume that vertex $3$ is not a leaf. (We also still assume that $d_1 \ge d_2 \ge 1$.) Consider a tree $\tree \in \cT_\sequence{d}$ with root $1$, such that $3$ is a child of $2$ and $2$ is a child of $1$, and such that all other children of vertices $1$ and $2$ are leaves (see Figure~\ref{fig:stochdom}). Then the $\sim$-equivalence class $\cC$ of $\tree$ contains ${d_1+d_2-1 \choose d_1}$ trees with root $2$ and ${d_1+d_2-1 \choose d_2}$ trees with root $1$, so ${d_1+d_2 \choose d_1}$ trees in total. 
\begin{figure}[hbt]
\begin{centering}
\includegraphics[width=0.2\textwidth]{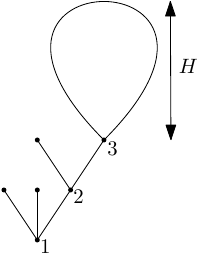}
\end{centering}
\caption{A schematic depiction of a tree in an equivalence class $\cC$ for which, if $\randomtree_{\cC} \in_u \cC \cap \cT_\sequence{d}$ and $\randomtree_{\cC}' \in_u \cT_\sequence{d'}$ then $\height(\randomtree_\cC') \prec_{\mathrm{st}} \height(\randomtree_\cC)$.}
\label{fig:stochdom}
\end{figure}

Let $H$ be the height of the subtree of $\tree$ rooted at vertex $3$ (this is at least $1$ by the assumption that $3$ is not a leaf). Then the height of $\randomtree_\cC \in_u \cC \cap \cT_\sequence{d}$ is either $H+1$ or $H+2$, and is $H+2$ precisely if either $1$ is the root and $3$ is a child of $2$, or if $2$ is the root and $3$ is a child of $1$. The total number of trees in $\cC \cap \cT_\sequence{d}$ with height $H+2$ is thus $2{d_1+d_2-2 \choose d_1-1}$, so 
\[
\p(\height(\randomtree_\cC)=H+2) = 
2{d_1+d_2-2 \choose d_1-1}{d_1+d_2\choose d_1}^{-1} = 
\frac{2d_1d_2}{(d_1+d_2-1)(d_1+d_2)}. 
\]
This probability decreases if $d_1$ and $d_2$ are replaced by $d_1+1$ and $d_2-1$, respectively, which establishes the required strict stochastic domination. 
\end{proof}

\section{\bf Future directions}\label{sec:conclusion}
Although part of our point in this work is to show that tail bounds for the height do not rely on being in a setting where there is convergence to a limiting tree (or limiting space), our results and techniques can be useful for {\em proving} such convergence. For example, note that for $k\in \N$ and $\sequence{d}$ a degree sequence corresponding to a tree with at least $k$ leaves, for  $\sequence{v}\in \mathcal{S}_\sequence{d}$, the subtree of $\tree(\sequence{v})$ spanned by the $k$ smallest labeled leaves is encoded by the first $j(k)$ elements of $\sequence{v}$, where $j(k)$ is the index of the $k$'th repeat in $\sequence{v}$. Since the law of $\randomtree_{\sequence{d}}\in_u \cT_{\sequence{d}}$ is invariant under  relabelling of the leaves by an independent uniform permutation, our sampling technique thus gives direct access to the law of the subtree spanned by a uniformly random set of $k$ leaves, also called the \emph{$k$-dimensional distribution} of the tree. Furthermore, our approach to bounding the height  in Theorem \ref{thm.gaussiantails} in fact yields a stronger result: we show that, when the tree is grown sequentially according to the bijection (as illustrated in Figure \ref{fig:sequence_ex}), \emph{all} vertices in the tree are close (on the scale of $\Theta(\sqrt{n})$) to the tree built in the first $\Theta(\sqrt{n})$ steps. The counterpart of this fact and convergence of the $k$-dimensional distributions are the two ingredients of Aldous' proof that a uniformly random tree $\randomtree_n \in_u \cT(n)$ 
converges in distribution to the Brownian continuum random tree after rescaling \cite[Theorem 8]{aldousContinuumRandomTree1991}. Therefore, our techniques give immediate access to comparable results for suitable sequences of trees with a given degree sequence, simply generated trees and conditioned Bienaym\'e trees. This idea is exploited for various laws on trees with a fixed degree sequence by Arthur Blanc-Renaudie in \cite{blanc}, to prove rescaled convergence of such trees toward inhomogeneous continuum random trees. 

Our approach is also useful for the study of other random tree models. Indeed, via the bijection of Section~\ref{sec:bij}, any distribution on labeled rooted trees gives rise to a distribution on sequences. If one can understand the law of the first $k$ repeated entries in such random sequences, one thereby obtains the law of the subtree spanned by the $k$ smallest labeled leaves. 
Our arguments for controlling the height can then in principle be combined with this spanning subtree information in order to prove both global height bounds and rescaled convergence in distribution (when the trees are viewed as measured metric spaces). On the other hand, by considering any natural distribution on sequences, the bijection yields a random tree model. Here is one natural example: consider a multiset $\cM$ of elements of $[n]$ with size (counted with multiplicity) at least $n-1$. Then construct a sequence of length $n-1$ by uniform sampling without replacement from $\cM$, and consider the tree $\randomtree$ on $[n]$ encoded by this sequence via the bijection.  (If $\cM$ itself has $n-1$ elements then $\randomtree$ has the law of a uniform tree with a given degree sequence, where for $i\in [n]$, the multiplicity of $i$ in $\cM$ is the degree of $i$ in $\randomtree$. More generally, the number of copies of $i$ in the subsample is the degree of  $i$ in $\randomtree$, and $\randomtree$ is a uniform tree with this degree sequence.) It turns out that $\randomtree$ has the law of a uniform spanning tree of a random \emph{tree-rooted graph} with degree sequence defined by $\cM$. Tree-rooted graphs are the non-planar analogues of tree-rooted maps \cite{MR314687,MR205882,MR3366469,MR2285813}, whose random instantiations are active objects of study in the planar probability and statistical physics community \cite{li2017,gwynne2017,MR3778352,MR3861296,MR4010949,MR4126936}.
In \cite{tree-weightedgraphs}, a different sampling method is used to show that in the finite variance regime, the spanning trees of large random tree-rooted graphs converge after rescaling to the Brownian continuum random tree.  In future work, we plan to build on the current work to study distances in and convergence of random tree-rooted graphs for other degree regimes.

Our results can also be applied to many models of random graphs that contain cycles, because the tree models that we study are important building blocks for many such sparse random graphs.  One family of examples is provided by the {\em configuration model}, which is used to sample graphs with a given degree sequence. In a number of recent works \cite{bhamidiUniversalityCriticalHeavytailed2020,conchon--kerjanStableGraphMetric2021,bhamidiGlobalLowerMassbound2020}, it is shown that, under conditions which ensure that the resulting random graphs are in some sense ``critical'', the large components in the configuration model converge in distribution to random compact measured metric spaces, after rescaling.  In all aforementioned papers, the techniques rely on studying spanning trees of the large components, and connecting these to the models for random trees that we study. 
However, none of those works achieve control over inter-vertex distances in smaller components. The reason for this is precisely that these components do not necessarily have scaling limits (in particular because some of them will have ``atypical'' degree distributions), and so previous results bounding the diameters of random trees do not apply.
This lack of control prohibits the authors from proving convergence of the ordered sequence of components in any topology stronger than the product topology. As a consequence, important statistics such as the diameter and the length of the longest path cannot be shown to converge under rescaling. 
We believe that, with the results and techniques of the current work in hand, it is now possible to bound distances in all components, thereby proving convergence in a stronger topology (and deducing convergence in distribution for the rescaled diameter), in all the above works. 

Finally, we believe that our results can be used to obtain non-asymptotic tail bounds on distances in uniform connected graphs with a given degree sequence and fixed surplus, which occur as the components of the configuration model. Such graphs can be related to trees with a fixed degree sequence by uniform cycle breaking, and twice the height of the resulting tree is an upper bound for the diameter in the graph. The law on trees with a given degree sequence induced by this cycle-breaking procedure has a non-trivial bias relative to the uniform distribution , whose form we believe is predicted by analogous results for components in the Erd\H{o}s--R\'enyi random graph \cite{criticalrandomgraphs}. By studying the bias it should be possible to translate our results to this set-up. 

\begin{acks}[Acknowledgments]
The first author acknowledges the financial support of NSERC during the preparation of this work. 
The second author would like to acknowledge the financial support of Centre de recherches math\'ematiques and Institut des sciences math\'ematiques; and the CogniGron research center
and the Ubbo Emmius Funds (Univ.\ of Groningen).

The authors both thank Arthur Blanc-Renaudie for useful discussions, in particular regarding the stochastic domination results, and Benedikt Stufler, for pointing out the application of our results to block graphs. 
We would also like to thank an associate editor and two anonymous referees for their very careful reading, which substantially improved the paper.
\end{acks}

\bibliographystyle{imsart-number} % Style BST file (imsart-number.bst or imsart-nameyear.bst)
\bibliography{height.bib}       % Bibliography file (usually '*.bib')

\end{document}